\theoremstyle{plain}
    \newtheorem{thm}{Theorem}[section]
    \newtheorem{prop}{Proposition}[section]
    \newtheorem{lemma}{Lemma}[section]
    \newtheorem{cor}{Corollary}[section]
    \newtheorem{defn}{Definition}[section]
    \newtheorem{example}{Example}[section]
\numberwithin{equation}{section}
\begin{document}

\title{Separable functions:  symmetry, monotonicity,  and  applications\thanks{Research was supported partially by the
National Natural Science Foundation of P. R. China (Grant No.
11571371); the Natural Science Foundation of Hunan Province (Grant No.
2018JJ3136); Scientific Research Fund of Hunan University of Science and Technology
(No. E51794).}}
\author{Tao Wang,
Taishan Yi}

\date {}
\maketitle

\begin{abstract}
In this paper,  we  introduce  concepts of separable functions in balls and in the whole space, and develop a new method to investigate the qualitative  properties of separable functions. We first study  the  axial symmetry and monotonicity of separable functions in unit circles by geometry analysis, and we prove the uniqueness of the
 symmetry axis for  nontrivial separable functions. Then by
  using reduction dimension  and convex analysis, we get the axial symmetry and monotonicity of separable functions in high dimensional spheres. Based on the above results on unit circles and spheres, we deduce the axial symmetry and monotonicity of separable functions in balls and
  the radial symmetry and monotonicity of separable functions in the whole space.  Conversely,  the function with axial symmetry and monotonicity in the ball domain is separable function, and
   the function with radial symmetry and monotonicity in the whole space  is also separable function. These enable us to provide easily some examples
 that separable functions in balls may be just axially symmetric not radially symmetric. Finally, as applications, we  obtain the axial  symmetry and monotonicity of all the  positive ground states to the   Choquard equation in a ball as well as  the radial  symmetry and monotonicity of all the  positive ground states   in the whole space.
  \end{abstract}
\bigskip

{\bf Key words} {\em Separable function; Geometry analysis; Convex analysis, Monotonicity;  Radial symmetry; Axial symmetry}

{\bf Mathematics Subject Classification 2010} {\em  26B35; 35A20; 35B06; 35B07}

\section{Introduction}
As we know,
symmetry and monotonicity are very important properties of solutions to elliptic partial differential equations, see \cite{cp,fl,kmk}. They  play an essential role in the uniqueness and dependence on parameters of solutions and hence have been extensively  investigated, see \cite{CHENLI,clo,MZ,wtyt} and references therein. In  these literatures, the maximum principle is vital  to study various  properties of solutions.
Based on maximum principle,  a solution $u$ to elliptic equations is comparable with the mirror point about  a hyperplane. To be precise, let $H\subset \mathbb{R}^n$ be an open half-space. For $x\in \mathbb{R}^n$, $\sigma_Hx$ denotes the symmetric point of $x$ with respect to the hyperplane $\partial H$. Then $u$ keeps the larger value  on $H$ (or on $\mathbb{R}^N\setminus H$).
 It is natural to guess that sufficiently many such hyperplanes can lead to symmetry and monotonicity.
This motivates us to introduce the following concept of separable functions.
\begin{defn}
 Let $\Omega\subset \mathbb{R}^N$. A function $u:\Omega\to\mathbb{R}$ is said to be \emph{separable} in $\Omega$ if for any $H\in \mathcal{H}_\Omega:=\{\mbox{open half space }H\subset \mathbb{R}^N| \sigma_H(H\bigcap \Omega)=\Omega\backslash cl (H)\}$,
\begin{equation}\label{47}
\mbox{ either $u(x)\geq u(\sigma_Hx)$ for all $x\in H\bigcap \Omega $ or
$u(x)\leq u(\sigma_Hx)$ for all $x\in H\bigcap \Omega$.}
\end{equation}
\end{defn}

In view of  the above definition, we see that $\Omega$ has better symmetry if the set $\mathcal{H}_\Omega$ is larger, and then the symmetry  and monotonicity of  separable functions in $\Omega$ can be simpler and richer. Since spheres, balls, and the whole space have rich symmetry, in the present paper, we mainly investigate symmetry and monotonicity  of separable functions in these domains.

For motivations of this study, first we recall some tools used to study symmetry and monotonicity of solutions to elliptic equations, which include  symmetric decreasing rearrangement (or Schwarz symmetrization), polarization method, the method of moving planes and its variants. The symmetric decreasing rearrangement mainly depends on   rearrangement inequalities and minimizing method to obtain the existence and symmetry of the minimizer (see \cite{LEH,llm}). For the polarization  method, one can first establish polarization inequality and then show the relationship between the solution and its polarization, via which the symmetry can be proved, (see \cite{btw,brock,sjvm,schaftingen} and references therein). As one of powerful tools in establishing symmetry and monotonicity of  solutions to elliptic equations, the method of moving planes was proposed by the Soviet mathematician Alexanderoff in the early 1950s. Decades later, it was further developed by  Gidas, Ni and Nirenberg \cite{GNN}, Chen and Li\cite{CHENLI} and many others. Please see
\cite{CHENLI1,Li2,SZOU,Li4,Linchang,clo1,gmx} and references therein. It is known that the three tools essentially rely on the  elliptic equations and the specific solutions. Then a natural question is whether  we can study the symmetry and monotonicity of the solutions by just using their separability instead  of the elliptic equations and other properties of the solutions. In other words, whether we can study the symmetry and monotonicity only via separability. This paper will give an affirmative answer.

In this paper,  we successively  consider  separable functions in circles, spheres, balls, and
the whole space. We leave complicated domains for future study. Here we  sketch the main ideas and approaches  to study several separable functions. To be precise,
we first introduce the concepts of separable functions in unit circle. Then by employing geometric analysis, we obtain that the set of global extremal points for a given positive and nonconstant separable function in unit circle is two arcs. One arc (max-arc, for short) is the set of maximum points and the other arc (min-arc, for short) is the set of minimum point of this function. This, combined with the separability  of the function, implies that the centers of the max-arc and min-arc are the ends of the same diameter.  By choosing suitable diameter and using the separability again, we prove the axial symmetry and monotonicity of separable functions  in circles with the unique symmetry axis.

 In what follows, in order to apply reduction dimension method, we give  equivalent definitions of separable functions in high dimensional spheres. For a given  positive and nonconstant separable function  in a sphere,  we have shown  that the set of global extremal points of this function is two sphere caps by using reduction dimension method and convex analysis. One sphere cap (max-cap, for short) is the set of maximum points and the other sphere cap  (min-cap, for short) is the set of minimum point of this function. Based on the separability of the function constrained in the unit circle through the centers of the max-cap and min-cap, we show that the centers  of the max-cap and min-cap are in the same diameter. By constructing suitable circles and using the axial symmetry and monotonicity of  this function in these circles, it is easy to check that this separable function in a given sphere is axially symmetric and monotone with  the unique symmetry axis.

In the sequel, based on the fact that a ball is made up of homocentric spheres, by using the separability of functions in the ball, we point out that the centers of the max-caps and min-caps for all the  homocentric spheres are in the same diameter. So we can deduce the axial symmetry and monotonicity of separable functions in balls by applying axial symmetry and monotonicity of separable functions in spheres. Conversely, the axially symmetric and monotone functions in balls are also  separable. In other words, the separability is  equivalent to  the axial symmetry and monotonicity for a given function in balls. This observation  enables us  to give an example that   separable functions  in balls  may be only axially symmetric  but not radially symmetric.

Finally, we give the definitions of separable functions in the whole space. Note that a positive separable function  in the  the whole space is separable  in any ball. This fact, combined with the axial symmetry and monotonicity of the  separable functions in balls,  implies that the separable function  in the whole space admits an unique symmetry axis  passing through any given point, and all the symmetry axes are parallel to each other. Furthermore,  suppose that the infimum of the separable function in the whole space is zero. Then we can deduce the radial symmetry and monotonicity of the separable function in the whole space.  Similarly, we also easily see that the separability is  equivalent to  the  radial symmetry and monotonicity for a given  positive function with the infimum  being zero.

As applications of symmetry and monotonicity of  separable functions, we consider Choquard equations in balls and in the whole space. Specifically, we  obtain the axial  symmetry and monotonicity of all the  positive ground states to  Choquard equations in balls as well as  the radial  symmetry and monotonicity  of all the  positive ground states  to  Choquard  equations in the whole space.

To sum up, this paper  provides  a new perspective to study the symmetry and monotonicity of solutions to elliptic equations. Roughly speaking, it involves  two steps to obtain the symmetry and monotonicity of solutions. In first step, we  prove the symmetry and monotonicity of separable functions.  We emphasize that the proof of this step does not rely on the exact equations or  properties of specific solutions.
 In second step, we are concerned with the separability of a specific  solution to a concrete  equation, and then deduce  its symmetry and monotonicity.

 Throughout this paper, we always  assume  that separable functions are only continuous.

For convenience, we  introduce  some  notations as follows:

\noindent $\bullet$ $\mathbb{N}$ is the set of all the positive integers.

\noindent $\bullet$ $N\in \mathbb{N}$ and $ N\geq2$.

\noindent $\bullet$ $0_k:=\underbrace{(0,\cdots,0) }\limits_{k}$ for $k\in \mathbb{N}$.

\noindent $\bullet$ Let
$$\mathbf{aff}(A):=\{\sum^{m}_{i=1}\lambda_ix^i|m\in \mathbb{N}, x^i\in A, \lambda_i\in \mathbb{R} \mbox{ and } \sum^{m}_{i=1}\lambda_i=1\}$$
and
$$\mathbf{co}(A):=\{\sum^{l}_{i=1}\lambda_ix^i|l\in \mathbb{N}, x^i\in A, \lambda_i\in [0,1] \mbox{ and } \sum^{l}_{i=1}\lambda_i=1\}.$$

\noindent$\bullet$ $S^{N-1}(x)$  is the  unit sphere centered at  $x$ and $S_r^{N-1}(x)$  is the sphere centered at  $x$  with radius $r>0$ in  $\mathbb{R}^N$.
For simplicity of notations, we write $S^{N-1}(0)$ and
$S_r^{N-1}(0)$
as $S^{N-1}$ and $S_r^{N-1}$, respectively.

\noindent$\bullet$
For  $r>0$, $B_r(x)$ is the closed ball  centered at $x\in\mathbb{R}^N$ with radius $r$. For simplicity of notations, we write   $B_r(0)$  as $B_r$.

\noindent$\bullet$ $O(N)$ represents the set of orthogonal transformations in $\mathbb{R}^N$.
For $M\in O(N)$ and $u\in C(\mathbb{R}^N,\mathbb{R})$, we define $u_{M}(x):=u(M^{-1}x)$.

\noindent$\bullet$ Let $H\subset \mathbb{R}^N$ be an open half-space. For any $x\in \mathbb{R}^N$,  $\sigma_Hx$ is the symmetric  point of $x$ with respect to $\partial H$.

The remaining of this  paper is organised as follows. In section 2,
 the  axial  symmetry and monotonicity of   separable  functions  in   circles,   spheres, and balls are established, by using  reduction dimension method, geometric analysis, and convex analysis.  Based on these results,  section 3 is devoted to the proof of radial  symmetry and monotonicity of   separable  functions in the whole space. Finally, in section 4, we apply our main theoretical results to Choquard type equations to obtain the axial  symmetry and monotonicity of all the  positive ground states  in a ball as well as  the radial  symmetry and monotonicity of all the  positive ground states   in the whole space.

\section{ Separable functions   in bounded domains}
In this section, we  investigate  the   symmetry and monotonicity of   separable  functions  in bounded domains such as high dimensional  balls $B_R\subset \mathbb{R}^N$ by using dimensionality reduction, geometry analysis, and convex analysis.

In the following, we first give some notations. Let $S^1$ be the unit circle in $\mathbb{R}^2$. Let $\mathop{xy}\limits^{\curvearrowright}$ be an arc from  $x$ to $y$  counterclockwise in $S^1$ and  $s(\mathop{xy}\limits^{\curvearrowright})$ be the arc length of $\mathop{xy}\limits^{\curvearrowright}$. Let
$$S_{\alpha}^1=\{(\cos(\alpha+\theta),\sin(\alpha+\theta)):\theta\in(0,\pi)\}, \ B_{\alpha}^1=\bigcup_{r\in[0,1]}rS_{\alpha}^1,\   l_{\alpha}=\partial B_{\alpha}^1\backslash S_{\alpha}^1,$$ where $\alpha\in\mathbb{R}.$ For any  $x\in S^1,$  let $l_{\alpha}(x)$ be the axial symmetric  point of $x$  with respect to $l_{\alpha}$.

For a given line $L\subseteq \mathbb{R}^N$, we say that $x,y\in \mathbb{R}^N$ are  axially symmetric  with respect to $L$ if there is $z^*\in L$ such that $||x-z^*||=||y-z^*||$ and $x-z^*,z^*-y\bot L$, respectively. Here $||x-z^*||=\min\{||x-z||:z\in L\}, ||y-z^*||=\min\{||y-z||:z\in L\}$.
A function $u\in C(\mathbb{R}^N,\mathbb{R})$ is said to be axially symmetric with respect to  a line $L$ if $u(x)=u(y)$
for any   $x,y\in \mathbb{R}^N$ that are axially symmetric with respect to $L$. Here $L$ is a symmetry axis of $u$.

Now we begin with the definition and properties of  separable functions   in  $S^1$ in the following subsection.
\subsection{ Separable functions   in unit circles}
The following gives the definition of  separable functions in unit circle $S^1$.

\begin{defn}
A function $v\in C(S^1,\mathbb{R})$ is said to be separable in $S^1$, if
for any  $\alpha\in[0,2\pi),$  there holds
\begin{equation}\label{1}
\mbox{either $v(l_{\alpha}(x))\geq v(x)$ for all $x\in S_{\alpha}^1$ or
$v(l_{\alpha}(x))\leq v(x)$ for all $x\in S_{\alpha}^1$}.
\end{equation}
\end{defn}
Now we show that the properties of separable functions   in $S^1$, which plays a critical role in investigating the symmetry and monotonicity of separable functions in high dimensional spheres and balls.
\begin{lemma}\label{lem2.1}
Let $v\in C(S^1,(0,\infty))$ be a separable function in $S^1$.
Suppose that  $\max\limits_{S^1}v>\min\limits_{S^1}v.$ Then there exist $\alpha_0\in[0,2\pi)$ and $\theta_1,\theta_2\in[0,\pi)$ such that
\begin {itemize}
\item[\rm{(i)}] $\theta_1+\theta_2<\pi.$

\item[\rm{(ii)}] $v^{-1}(\max\limits_{S^1} v)=\{(\cos(\alpha_0+\theta),\sin(\alpha_0+\theta)): |\theta|\leq \theta_1\}$ and $$v^{-1}(\min_{S^1} v)=\{(\cos(\alpha_0+\pi+\theta),\sin(\alpha_0+\pi+\theta)): |\theta|\leq \theta_2\}.$$

    \item[\rm{(iii)}]  $v(x)=v(l_{\alpha_0}(x))$ for all  $x\in S_{\alpha_0}^1$.

    \item[\rm{(iv)}] $v((\cos \alpha, \sin \alpha))$ is not a  constant function and is a nonincreasing function with respect to $\alpha\in[\alpha_0,\alpha_0+\pi)$.
\end {itemize}
\end{lemma}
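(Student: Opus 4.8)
The plan is to exploit the separability condition \eqref{1} at carefully chosen values of $\alpha$, reading off geometric information about the level sets of $v$ from the sign of the inequality. First I would set up the basic dichotomy: for each line $l_\alpha$ through the origin, separability says $v$ weakly dominates its reflection on one of the two open half-circles $S_\alpha^1$ or $S_{\alpha+\pi}^1$. I would reformulate this by introducing, for each $\alpha$, the closed half-circle $\overline{C_\alpha}$ on which $v \geq v\circ l_\alpha$ (choosing the appropriate one), so that $v$ is ``tilted toward'' $C_\alpha$. The key structural observation is a \emph{nesting/transitivity} property: if a maximum point $p$ of $v$ lies in the open half-circle $C_\alpha$, then reflecting across $l_\alpha$ cannot increase $v$, and combining such statements over a range of $\alpha$ forces the maximum set to be an arc (an interval in the angular coordinate) and likewise for the minimum set; moreover the arc of maxima and the arc of minima must lie in ``opposite'' half-circles for every admissible $l_\alpha$, which pins their centers to be antipodal.

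Concretely, the steps I would carry out are: (1) Show $M := v^{-1}(\max_{S^1} v)$ and $m := v^{-1}(\min_{S^1} v)$ are each (closed) arcs. For this, suppose $p, q \in M$ with the shorter arc $\widehat{pq}$ between them; pick any interior point $z$ of that arc and let $l_\alpha$ be the line through the origin with $z, l_\alpha(z)$ reflections — actually I would instead argue: if $z \notin M$, choose $\alpha$ so that $z$ and one of $p,q$ lie in the same open half-circle while the other lies in the complement, and derive a contradiction with \eqref{1} since $v$ would have to be $\geq$ its reflection on that half (giving $v(z) \geq \max$ after reflecting the max point in) or $\leq$ (forcing $v$ at the reflected point, still a max, to dominate — contradiction with $z$ not maximal only if handled carefully). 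The cleanest route is: for the diameter whose endpoints are the midpoints of the complementary arcs, use separability to show $v$ is monotone along each half, which simultaneously gives that $M$ is an arc. (2) Having $M$ and $m$ as arcs with angular centers $\alpha_0$ and $\beta_0$, use separability across the line $l_{(\alpha_0+\beta_0)/2 + \pi/2}$ (perpendicular bisector direction) to force $\beta_0 = \alpha_0 + \pi$: otherwise some $l_\alpha$ would have both a max-point and a min-point strictly on the same open half-circle, and \eqref{1} applied there forces $v$ to be simultaneously $\geq$ and (after also reflecting) $\leq$ its reflection, collapsing $v$ to a constant on that half, contradicting $\max v > \min v$ together with continuity. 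This also yields the half-widths $\theta_1, \theta_2$ and the constraint $\theta_1 + \theta_2 < \pi$ (the two arcs are disjoint and cannot together cover the circle, again by a separability argument at the line separating them). (3) With $\alpha_0$ in hand, verify (iii): the line $l_{\alpha_0}$ passes through the center of $M$; separability gives $v \geq v\circ l_{\alpha_0}$ or $v \leq v\circ l_{\alpha_0}$ on $S^1_{\alpha_0}$, but by symmetry of the configuration of $M$ and $m$ about $l_{\alpha_0}$ neither strict alternative is sustainable, so equality holds, i.e. $v$ is axially symmetric about $l_{\alpha_0}$. (4) Finally, (iv): for each $\alpha$ with $0 < \alpha < \pi$, apply \eqref{1} to $l_{\alpha_0 + \alpha}$; using that the max-arc center is at $\alpha_0$ and lies in the half-circle on the ``$\alpha_0$-side'' of $l_{\alpha_0+\alpha}$, deduce $v\bigl((\cos(\alpha_0+\theta),\sin(\alpha_0+\theta))\bigr) \geq v\bigl((\cos(\alpha_0 + 2\alpha - \theta), \dots)\bigr)$ for the appropriate $\theta$, which unwinds to monotonicity of $\alpha \mapsto v((\cos(\alpha_0+\alpha),\sin(\alpha_0+\alpha)))$ on $[0,\pi)$; nonconstancy is immediate from $\max v > \min v$ and the fact that both extrema are attained on $[\alpha_0, \alpha_0+\pi]$.

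The main obstacle I anticipate is step (1)–(2): rigorously extracting ``$M$ is an arc'' and ``the centers are antipodal'' purely from the pointwise dichotomy \eqref{1}, because one must carefully track \emph{which} of the two half-circles carries the inequality for each $\alpha$ and rule out the degenerate case where the inequality direction flips in a way that would make $v$ locally constant. Continuity of $v$ will be essential here — it lets one pass to limits as $\alpha$ varies and forces the ``flip set'' of directions to be governed by the geometry of $M$ and $m$. I would handle this by first proving a lemma: if $v \geq v\circ l_\alpha$ on $S^1_\alpha$ and also $v \geq v \circ l_{\alpha'}$ on $S^1_{\alpha'}$ for $\alpha < \alpha' < \alpha + \pi$, then $v$ is nonincreasing on the arc from the $l_\alpha$-axis direction to the $l_{\alpha'}$-axis direction; iterating/continuously sweeping this gives everything. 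Once the arc structure and antipodality are secured, (iii) and (iv) are short.
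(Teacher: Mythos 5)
Your overall blueprint (extremal sets are arcs, centers antipodal, then symmetry and monotonicity) matches the paper's, but the engine you rely on is defective, and the places where you defer ("the main obstacle I anticipate") are exactly where the real work lies. Your "key structural observation" — that if a maximum point $p$ lies in the open half-circle $C_\alpha$ then $v\geq v\circ l_\alpha$ on $C_\alpha$ — is false as stated: if the maximum set is a nontrivial arc cut by $l_\alpha$, the reflection of $p$ may again be a maximum point, and the opposite alternative in \eqref{1} is perfectly consistent (take $v$ axially symmetric about the $x_1$-axis with max-arc of half-width $\theta_1$ and an axis at small angle $\epsilon\in(0,\theta_1)$: a max point lies in $C_\epsilon$, yet $v\leq v\circ l_\epsilon$ there). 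The direction of the inequality is pinned only when some extremal point's \emph{reflection fails to be extremal}, which is why the paper must first prove, via a compactness/extremal-choice argument (its Step 1: a farthest max--min pair is antipodal) and a sup-sweep argument (its Step 2), that the max set is an arc, and only afterwards can state the direction-pinning in terms of the arc's \emph{center} (its Corollary 2.1). Your "cleanest route" for arc-ness is moreover circular: "the diameter whose endpoints are the midpoints of the complementary arcs" and "show $v$ is monotone along each half" both presuppose the extremal-set structure (and an anchor for the inequality direction) that you are trying to establish; likewise your auxiliary sweeping lemma assumes you already know which side dominates for a whole range of $\alpha$, which is the question.

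Two further steps are asserted rather than proved. For antipodality, "a max-point and a min-point strictly on the same open half-circle forces $v$ to be simultaneously $\geq$ and $\leq$ its reflection" is not right: either alternative alone merely forces the reflected extremal point to be extremal again. The paper's Step 3 instead chooses a diameter avoiding the two arc \emph{centers} so that there exist points with $v(\tilde x^*)>v(l(\tilde x^*))$ and $v(\tilde y^*)<v(l(\tilde y^*))$ on the same side — strict inequalities in both directions, which is what actually contradicts \eqref{1}. For (iii), "by symmetry of the configuration of $M$ and $m$ about $l_{\alpha_0}$ neither strict alternative is sustainable" is not an argument: $v$ could a priori satisfy $v\geq v\circ l_{\alpha_0}$ with strict inequality off the extremal arcs while $M$ and $m$ remain symmetric about the axis; ruling this out requires reflections across \emph{other} lines, as in the paper's Step 4, where a perpendicular bisector $\bar l$ of a suitably chosen chord, together with continuity and compactness of $A$, forces a boundary point of the max-arc to reflect into the max set and yields the contradiction. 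Your step (4) is essentially the paper's Step 5 and is fine once the earlier structure is in place, but as written the proposal does not close the gaps on which it depends.
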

\begin{proof}
Let $A:=v^{-1}(\max\limits_{S^1}v)$ and $B:=v^{-1}(\min\limits_{S^1}v)$. Then $A\neq \emptyset$ and  $B\neq \emptyset$.
 We shall finish the proof by the following five steps.

 \noindent\textbf{Step 1.} We  claim that there exist $x\in A$ and $y\in B$ such that $||x-y||=2$, that is, there exists $\alpha_0\in [0,2\pi)$
 such that $x=(\cos \alpha_0, \sin \alpha_0)\in A$ and $y=(\cos (\alpha_0+\pi), \sin (\alpha_0+\pi))\in B,$ where  $||\cdot||$ represents the Euclidean norm on $\mathbb{R}^2$.

Otherwise,  according to the compactness of $\mbox{ $A$ and $B$} $, there exist $x\in A, y\in B$ such that
$||x-y||=\max d(A\times B)<2$  where $d:A\times B\to\mathbb{R}$ by $(x,y)\mapsto ||x-y||$.
Without loss of generality, we assume $0<s(\mathop{xy}\limits^{\curvearrowright})<\pi.$ Clearly, there exists $\alpha_0\in[0,2\pi)$ such that
 $$x=(\cos \alpha_0,\sin \alpha_0),\ y=(\cos (\alpha_0+s(\mathop{xy}\limits^{\curvearrowright})),
\sin (\alpha_0+s(\mathop{xy}\limits^{\curvearrowright}))).$$
By taking  $ {z}=(\cos(\alpha_0+2s(\mathop{xy}\limits^{\curvearrowright})),
\sin(\alpha_0+2s(\mathop{xy}\limits^{\curvearrowright})))$, we have $s(\mathop{xy}\limits^{\curvearrowright})=s(\mathop{yz}\limits^{\curvearrowright})\in(0,\pi)$, and hence $\mathop{{ {z}x}}\limits^{\curvearrowright}\bigcap A\subset \{x, {z}\}.$
Let $\alpha^*=\alpha_0-\frac{2(\pi-s(\mathop{xy}\limits^{\curvearrowright}))}{3}.$
Then $0<\alpha_0-\alpha^*<\pi-s(\mathop{xy}\limits^{\curvearrowright})$, and thus $l_{\alpha^*}(x)\in \mathop{zx}\limits^{\curvearrowright},$
$s(
\begin{tikzpicture}[>=stealth,baseline,anchor=base,inner sep=0pt]
\matrix (foil) [matrix of math nodes,nodes={minimum height=0.5em}] {
x& l_{\alpha^*}&  (&y&)\\
};
\path[->] ($(foil-1-1.north)+(-.5ex,.5ex)$)   edge[bend left=30]    ($(foil-1-5.north)+(0ex,.5ex)$);
\end{tikzpicture}
)=\frac{2\pi+s(\mathop{xy}\limits^{\curvearrowright})}{3}\in(s(\mathop{xy}\limits^{\curvearrowright}),\pi),$
and $x,y\in S_{\alpha^*}^1.$ So, $v(l_{\alpha^*}(x))<\max\limits_{S^1}v$  due to $l_{\alpha^*}(x)\in \mathop{zx}\limits^{\curvearrowright}$ and the choices of $x,y$.  It follows from \eqref{1} and $v(l_{\alpha^*}(x))<\max\limits_{S^1}v=v(x)$
that
$$ v(\bar{x})\geq v({l_{\alpha^*}}(\bar{x}))\ \mbox{for all }\bar{x}\in S_{\alpha^*}^1.$$
In particular, $v(y)\geq v(l_{\alpha^*} (y))$  and hence   $l_{\alpha^*}(y)\in B$,
 a contradiction with $s(\begin{tikzpicture}[>=stealth,baseline,anchor=base,inner sep=0pt]
\matrix (foil) [matrix of math nodes,nodes={minimum height=0.5em}] {
x& l_{\alpha^*}&  (&y&)\\
};
\path[->] ($(foil-1-1.north)+(-.5ex,.5ex)$)   edge[bend left=30]    ($(foil-1-5.north)+(0ex,.5ex)$);
\end{tikzpicture})\in(s(\mathop{xy}\limits^{\curvearrowright}),\pi).$
The above arguments are illustrated in Figure~\ref{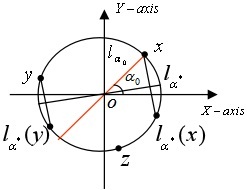}. Therefore, we have finished the proof of Step 1.
\begin{figure}[!ht]
  \center
  \includegraphics[height=5cm,width=6cm]{figure1.jpg}
  \caption{schematic diagram  for the proof of \textbf{Step 1}}\label{figure1.jpg}
 \end{figure}

\noindent\textbf{Step 2.}  We shall prove $A=\mathop{{\bar{\bar{x}}\bar{x}}}\limits^{\curvearrowright}$, where $\bar{x}\in A\bigcap \mathop{xy}\limits^{\curvearrowright},\bar{\bar{x}}\in A\bigcap \mathop{yx}\limits^{\curvearrowright}$ with
$\|\bar{x}-y\|=\min\limits_{z\in A\bigcap \mathop{xy}\limits^{\curvearrowright}}\|z-y\|$ and $\|\bar{\bar{x}}-y\|=\min\limits_{z\in A\bigcap \mathop{yx}\limits^{\curvearrowright}}\|z-y\|$.

Clearly, $A\subset \mathop{{\bar{\bar{x}}\bar{x}}}\limits^{\curvearrowright}.$  We only need to prove
$\mathop{{\bar{\bar{x}}\bar{x}}}\limits^{\curvearrowright}\subset A.$  It is clear that  $\mathop{{\bar{\bar{x}}\bar{x}}}\limits^{\curvearrowright}\subset A$ if $s(\mathop{{\bar{\bar{x}}\bar{x}}}\limits^{\curvearrowright})=0$. Now we suppose $s(\mathop{{\bar{\bar{x}}\bar{x}}}\limits^{\curvearrowright})>0$.
Take $\bar{x}^*,\bar{\bar{x}}^*\in \mathop{{\bar{\bar{x}}\bar{x}}}\limits^{\curvearrowright}$ such  that
$$\begin{array}{lll}
s(\mathop{{\bar{x}^*\bar{x}}}\limits^{\curvearrowright})=
\sup\{s(\mathop{{\bar{z}\bar{x}}}\limits^{\curvearrowright}):\mathop{{\bar{z}\bar{x}}}\limits^{\curvearrowright}\subset A \},
s(\mathop{{\bar{\bar{x}}\bar{\bar{x}}^*}}\limits^{\curvearrowright})=
\sup\{s(\mathop{{\bar{\bar{x}}\bar{z}}}\limits^{\curvearrowright}):\mathop{{\bar{\bar{x}}\bar{z}}}\limits^{\curvearrowright}\subset A\},
\end{array}.$$
It suffices to prove $\bar{\bar{x}}^*=\bar{x}^*.$  Otherwise, $\bar{\bar{x}}^*\neq\bar{x}^*.$
Without loss of generality, we may assume that
$s(\mathop{{\bar{x}^*\bar{x}}}\limits^{\curvearrowright})\geq s(\mathop{{\bar{\bar{x}}\bar{\bar{x}}^*}}\limits^{\curvearrowright}).$
Take $\bar{x}^0\in \mathop{{\bar{x}^*\bar{x}}}\limits^{\curvearrowright}$ and $\alpha^*\in[0,2\pi)$
 such that $s(\mathop{{\bar{x}^0\bar{x}}}\limits^{\curvearrowright})=
 s(\mathop{{\bar{\bar{x}}\bar{\bar{x}}^*}}\limits^{\curvearrowright})$ and $\bar{\bar{x}}=l_{\alpha^*}(\bar{x})$.
 Then $\min\{s(\mathop{{\bar{x}\bar{\bar{x}}}}\limits^{\curvearrowright}),s(\mathop{{\bar{\bar{x}}^*\bar{x}^0}}\limits^{\curvearrowright})\}>0$, $\mathop{{\bar{\bar{x}}\bar{\bar{x}}^*}}\limits^{\curvearrowright}\bigcup
 \mathop{{\bar{x}^0\bar{x}}}\limits^{\curvearrowright}\subset A,$ $\mathop{{\bar{x}^0\bar{x}}}\limits^{\curvearrowright}\subset S_{\alpha^*}^1$,
 and $\mathop{{\bar{\bar{x}}\bar{\bar{x}}^*}}\limits^{\curvearrowright}\subset S^1\backslash S_{\alpha^*}^1.$
For any $\alpha\in [\alpha^*,\alpha^*+\frac{\min\{s(\mathop{{\bar{x}\bar{\bar{x}}}}\limits^{\curvearrowright}),s(\mathop{{\bar{\bar{x}}^*\bar{x}^0}}\limits^{\curvearrowright})\}}{2}]$, we easily check that $\mathop{{\bar{x}^0\bar{x}}}\limits^{\curvearrowright}\subset S_{\alpha}^1$
,$\mathop{{\bar{\bar{x}}\bar{\bar{x}}^*}}\limits^{\curvearrowright}\subset S^1\backslash S_{\alpha}^1,$
$l_\alpha(\bar{\bar{x}})\in \mathop{{\bar{x}\bar{\bar{x}}}}\limits^{\curvearrowright}$, and thus
$v(\bar{\bar{x}})>v(l_\alpha(\bar{\bar{x}})).$ It follows from \eqref{1} that
$v(l_\alpha(\bar{x}^0))\geq v(\bar{x}^0)=\max\limits_{S^1} v,$ (see Figure~\ref{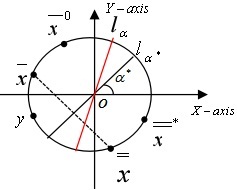}).
\begin{figure}[!ht]
  \center
  \includegraphics[height=5cm,width=6cm]{figure2.jpg}
  \caption{schematic diagram  for the partial  proof of \textbf{Step 2}}\label{figure2.jpg}
\end{figure}
As a result, $l_\alpha(\bar{x}^0)\in A$ for all $\alpha\in (\alpha^*,\alpha^*+\frac{\min\{s(\mathop{{\bar{x}\bar{\bar{x}}}}\limits^{\curvearrowright}),s(\mathop{{\bar{\bar{x}}^*\bar{x}^0}}\limits^{\curvearrowright})\}}{2})$ and hence by the definition of $l_\alpha$ and the compactness
of $A$,
$\mathop{{\bar{\bar{x}}\bar{\bar{x}}^*}}\limits^{\curvearrowright}\subsetneqq \begin{tikzpicture}[>=stealth,baseline,anchor=base,inner sep=0pt]
\matrix (foil) [matrix of math nodes,nodes={minimum height=0.5em}] {
\bar{\bar{x}}&l_{\alpha^*+\frac{\min\{s(\mathop{{\bar{x}\bar{\bar{x}}}}\limits^{\curvearrowright}),s(\mathop{{\bar{\bar{x}}^*\bar{x}^0}}\limits^{\curvearrowright})\}}{2}} &  (&\bar{x}^0&)\\
};
\path[->] ($(foil-1-1.north)+(-.5ex,.5ex)$)   edge[bend left=30]    ($(foil-1-5.north)+(0ex,.5ex)$);
\end{tikzpicture}\subset A,$ a contradiction with the choice of $\bar{\bar{x}}^*.$
This proves $\bar{x}^*=\bar{\bar{x}}^*$ and consequently $A=\mathop{{\bar{\bar{x}}\bar{x}}}\limits^{\curvearrowright}.$

\noindent\textbf{Step 3.} Show that there exist $\alpha_0\in [0,2\pi)$ and $\theta_1,\theta_2\in[0,\pi)$ such that
$A=\{(\cos(\alpha_0+\theta),\sin(\alpha_0+\theta)): |\theta|\leq \theta_1\}$ and $B=\{(\cos(\alpha_0+\pi+\theta),\sin(\alpha_0+\pi+\theta)): |\theta|\leq \theta_2\}.$

By \textbf{Step 2}, there exist $\alpha_0\in [0,2\pi)$ and $\theta_1\in[0,\pi)$ such that
$A=\{(\cos(\alpha_0+\theta),\sin(\alpha_0+\theta)): |\theta|\leq \theta_1\}$. By applying the claim in
\textbf{Step 2} again to $\tilde{v}(x):=1+\max\limits_{S^1}v-v(x)$, we have
$$\tilde{v}^{-1}(\max\limits_{S^1} \tilde{v})=\{(\cos(\alpha_1+\theta),\sin(\alpha_1+\theta)): |\theta|\leq \theta_2\}$$
for some $(\alpha_1,\theta_2)\in[0,2\pi)\times [0,\pi).$ In other words, $B=\{(\cos(\alpha_1+\theta),\sin(\alpha_1+\theta)): |\theta|\leq \theta_2\}.$ It suffices to prove that $(0,0)$ belongs to the line segment $\overline{x^*y^*}$, where
$x^*:=(\cos{\alpha_0},\sin{\alpha_0}), y^*:=(\cos{\alpha_1},\sin{\alpha_1})$. Otherwise, there exists a diameter $l$ such that
$x^*,y^*$ are on the same side of $l$ and $l\bigcap \{x^*,y^*\}=\emptyset$. By the choices of $x^*$ and $y^*$,
 there exist $\tilde{x}^*,\tilde{y}^*\in \mathop{x^*y^*}\limits^{\curvearrowright}$ such that $v(\tilde{x}^*)>v(l(\tilde{x}^*))$ and $v(\tilde{y}^*)<v(l(\tilde{y}^*))$.
 This, combined with the  separability of $v$,
 implies a contradiction with the fact that $v|_{S^1}$ is not constant.

\noindent\textbf{Step 4.}  We show that  $v(l^*(x))=v(x)$ for any $x\in \mathop{x^*y^*}\limits^{\curvearrowright},$ where $l^*=\overline{{x^*y^*}}$, and $x^*, y^*$  defined in Step 3 represent the centers of $A$ and $B$, respectively.

Otherwise, there exists $\bar{x}\in \mathop{x^*y^*}\limits^{\curvearrowright}\backslash(A\bigcup B)$ and
$v(l^*(\bar{x}))\neq v(\bar{x}).$
Without loss of generality, we may assume that $v(\bar{x})>v(l^*(\bar{x}))$ and $l^*(\bar{x})\in \mathop{y^*x^*}\limits^{\curvearrowright}\backslash(A\bigcup B).$
In view of the continuity of $v$ and the compactness of $A$, we know that there exists  $\bar{\bar{x}}\in \mathop{l^*(\bar{x})x^*}\limits^{\curvearrowright} \backslash A$
such that $v(\bar{x})>v(\bar{\bar{x}})$, $\{x^*,x^{**},\bar{\bar{x}}\}$ and $\{y^*,\bar{x}\}$  locate on both sides of the line $\bar{l}$,  with $x^{**}=(\cos(\alpha_0-\theta_1),\sin(\alpha_0-\theta_1))$ and $\bar{l}$ being the
perpendicular bisector of $\overline{\bar{x}\bar{\bar{x}}}$.  It follows from $\bar{l}(\bar{x})=\bar{\bar{x}}$, $v(\bar{x})>v(\bar{\bar{x}})$
and the separability of $v$ that $v(x^{**})\leq v(\bar{l}(x^{**}))$, where $x^{**}=(\cos(\alpha_0-\theta_1),\sin(\alpha_0-\theta_1)),$
which yields a contradiction to $\bar{l}(x^{**})\notin A$  (see Figure~\ref{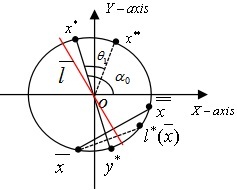}).
\begin{figure}[!ht]
  \center
  \includegraphics[height=5cm,width=6cm]{figure3.jpg}
  \caption{schematic diagram  for the   proof of \textbf{Step 4}}\label{figure3.jpg}
\end{figure}

\noindent\textbf{Step 5.}
We show that $u:[0,\pi]\ni \theta\mapsto v(\cos(\alpha_0+\theta),\sin(\alpha_0+\theta))\in(0,\infty)$ is decreasing at $\theta\in [0,\pi].$\\
Indeed, for any given $\theta_1^*,\theta_2^*\in [0,\pi]$  with $\theta_1^*<\theta_2^*$, let
$$\bar{x}=(\cos(\alpha_0+\theta_1^*),\sin(\alpha_0+\theta_1^*)),\bar{\bar{x}}=(\cos(\alpha_0+\theta_2^*),\sin(\alpha_0+\theta_2^*))$$
and $\bar{l}$ represent the perpendicular bisector of $\overline{\bar{x}\bar{\bar{x}}},$ that is, $\bar{l}(\bar{x})=\bar{\bar{x}}.$ Then
 $\{x^*,\bar{x}\}$  and  $\{y^*,\bar{\bar{x}}\}$   locate on both sides of the line  $\bar{l}.$
 It follows from \eqref{1} that
 $$v(\bar{x})\geq v(\bar{l}(\bar{x}))=v(\bar{\bar{x}}).$$
In other words, $u(\theta_1^*)\geq u(\theta_2^*).$ The arbitrariness of $\theta_1^*$ and $\theta_2^*$ implies that $u$
is decreasing.

Therefore, \textbf{Step 3} gives (i) and (ii) while (iii) and (iv) follow from \textbf{Step 4} and  \textbf{Step 5},
respectively.
\end{proof}

By Lemma \ref{lem2.1}, it is easily to  check  the following two corollaries, which are very useful in extending the conclusions in Lemma \ref{lem2.1} to separable functions in  high dimensional spheres and balls.

\begin{cor}\label{cor2.1}
Let $v$ and $\alpha_0$ be choose in Lemma~\ref{lem2.1}.  For any $\alpha\in \mathbb{R},$ if $(\cos{\alpha_0},\sin{\alpha_0})\in S_{\alpha}^1$(or $S^1\backslash S_{\alpha}^1$), then $v(x)\geq v(l_{\alpha}(x))$ (or $v(x)\leq v(l_{\alpha}(x))$) for any $x\in S_{\alpha}^1$.
\end{cor}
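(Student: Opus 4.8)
The plan is to read off the statement of Corollary~\ref{cor2.1} directly from the structure established in Lemma~\ref{lem2.1}, especially parts (ii) and (iv). Write $x^*=(\cos\alpha_0,\sin\alpha_0)$, which by (ii) is the center of the max-arc $A=v^{-1}(\max_{S^1}v)$, and let $y^*=(\cos(\alpha_0+\pi),\sin(\alpha_0+\pi))$ be the center of the min-arc $B$. The key observation is that for any $\alpha\in\mathbb{R}$, the half-circle $S_\alpha^1$ is a semicircle whose bounding diameter is $l_\alpha$, and $l_\alpha(x)$ is the reflection of $x$ across this diameter; moreover, exactly one of $x^*\in S_\alpha^1$ or $x^*\in S^1\setminus S_\alpha^1$ holds (the boundary case $x^*\in l_\alpha$ being covered by (iii) when $\alpha=\alpha_0$, where equality holds trivially, so one may assume $x^*\notin l_\alpha$, or simply note both inequalities degenerate to equality there).

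First I would treat the case $x^*\in S_\alpha^1$. By the separability condition \eqref{1}, one of the two alternatives holds on all of $S_\alpha^1$: either $v(l_\alpha(x))\ge v(x)$ throughout, or $v(l_\alpha(x))\le v(x)$ throughout. To decide which, I would test the inequality at the single point $x=x^*$. Since $x^*\in A$, we have $v(x^*)=\max_{S^1}v\ge v(l_\alpha(x^*))$. If $v(x^*)>v(l_\alpha(x^*))$ — which happens precisely when $l_\alpha(x^*)\notin A$ — then the first alternative in \eqref{1} is impossible, so the second must hold: $v(x)\ge v(l_\alpha(x))$ for all $x\in S_\alpha^1$, as claimed. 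The remaining subcase is $l_\alpha(x^*)\in A$, i.e. $v(l_\alpha(x^*))=v(x^*)$; here I would argue that because $A$ is an arc centered at $x^*$ (part (ii)) and $l_\alpha$ passes through the interior of $S_\alpha^1$, having both $x^*$ and its reflection in $A$ forces, via the monotonicity in (iv) applied along the circle, that the inequality $v(x)\ge v(l_\alpha(x))$ still holds on $S_\alpha^1$ — or more cleanly, one can perturb $\alpha$ slightly and use continuity of $v$ together with the already-proved strict subcase. The case $x^*\in S^1\setminus S_\alpha^1$ is symmetric: then $l_\alpha(x^*)\in S_\alpha^1$, and applying the above to the point $l_\alpha(x^*)$ (or running the same test) gives $v(x)\le v(l_\alpha(x))$ on $S_\alpha^1$.

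Alternatively, and perhaps most transparently, I would phrase the whole argument through part (iv): parametrize $S^1$ by angle measured from $\alpha_0$, so that $v$ is nonincreasing as the angle runs over $[0,\pi)$ and (by the axial symmetry in (iii)) nondecreasing over $(-\pi,0]$, i.e. $v$ decreases as one moves away from $x^*$ along either direction toward $y^*$. The reflection $l_\alpha$ maps each point of $S_\alpha^1$ to a point of $S^1\setminus S_\alpha^1$; when $x^*\in S_\alpha^1$, reflection across $l_\alpha$ moves every point of $S_\alpha^1$ strictly farther from $x^*$ (in the appropriate angular sense), hence does not increase $v$, giving $v(l_\alpha(x))\le v(x)$. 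When $x^*\notin S_\alpha^1$ the reflection moves points closer to $x^*$, reversing the inequality. The main obstacle, and the only place requiring care, is the boundary/degenerate configuration where $x^*$ lies on $l_\alpha$ or where the max-arc $A$ is large enough that $l_\alpha(x^*)$ still lies in $A$; I expect to dispose of it either by noting the claimed inequality holds with equality there, or by a short continuity argument reducing to the generic case handled above.
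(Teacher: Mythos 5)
Your second (``alternatively'') argument --- that by parts (iii) and (iv) of Lemma~\ref{lem2.1} the value of $v$ is a nonincreasing function of the angular distance from $x^*=(\cos\alpha_0,\sin\alpha_0)$, and that reflection across $l_\alpha$ weakly increases (resp.\ decreases) this distance for points of $S_\alpha^1$ when $x^*\in S_\alpha^1$ (resp.\ $x^*\in S^1\backslash S_\alpha^1$) --- is correct and is precisely the ``easy check'' from Lemma~\ref{lem2.1} that the paper intends, since it gives no separate proof. Note only that your first route (testing \eqref{1} at $x=x^*$) is conclusive solely when $l_\alpha(x^*)\notin A$, and the proposed perturbation of $\alpha$ need not reach the strict subcase when the max-arc $A$ contains a neighborhood of $l_\alpha(x^*)$, so the monotonicity argument is the one to rely on.
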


\begin{cor}\label{cor2.2}
Let $v\in C(S^1,(0,\infty))$. Suppose that $v$ is separable and
 \begin{equation}\label{5}
v((\cos{\alpha},\sin{\alpha}))=v((\cos{(\alpha+\pi)},\sin{(\alpha+\pi)}))\ \mbox{for any } \alpha\in[0,2\pi).
\end{equation}
Then $v$ is a constant function on $S^1$.
\end{cor}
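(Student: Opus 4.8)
The plan is to argue by contradiction and reduce immediately to Lemma~\ref{lem2.1}. Suppose $v$ is not constant on $S^1$. Since $v$ is continuous on the compact set $S^1$, it attains its extrema, and non-constancy forces $\max_{S^1} v > \min_{S^1} v$. Thus the hypotheses of Lemma~\ref{lem2.1} are met, and we obtain $\alpha_0\in[0,2\pi)$ and $\theta_1,\theta_2\in[0,\pi)$ satisfying conclusions (i)--(iv) of that lemma.

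The key observation is that the centre of the max-arc and the centre of the min-arc are antipodal: evaluating part (ii) of Lemma~\ref{lem2.1} at $\theta=0$ shows $(\cos\alpha_0,\sin\alpha_0)\in v^{-1}(\max_{S^1}v)$ and $(\cos(\alpha_0+\pi),\sin(\alpha_0+\pi))\in v^{-1}(\min_{S^1}v)$. Now apply the symmetry hypothesis \eqref{5} with $\alpha=\alpha_0$:
\[
\max_{S^1}v = v((\cos\alpha_0,\sin\alpha_0)) = v((\cos(\alpha_0+\pi),\sin(\alpha_0+\pi))) = \min_{S^1}v,
\]
contradicting $\max_{S^1}v > \min_{S^1}v$. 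Hence $v$ must be constant on $S^1$.

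An essentially equivalent route, if one prefers not to invoke the description of the extremal sets, is to set $u(\alpha):=v((\cos\alpha,\sin\alpha))$ and note that \eqref{5} says $u$ is $\pi$-periodic, while part (iv) of Lemma~\ref{lem2.1} (equivalently Step~5 of its proof) says $u$ is nonincreasing and non-constant on $[\alpha_0,\alpha_0+\pi)$; a $\pi$-periodic function that is nonincreasing across a full period is forced to be constant (using continuity to pass to the closed interval $[\alpha_0,\alpha_0+\pi]$, where $u(\alpha_0)=u(\alpha_0+\pi)$ by periodicity), again a contradiction.

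The argument is short, and there is no substantive obstacle: the only point requiring care is that the statement being contradicted is precisely the strict inequality $\max_{S^1}v>\min_{S^1}v$ that is needed to invoke Lemma~\ref{lem2.1}, and that the antipodal positions of the centres of the max-arc and min-arc can be read directly off part (ii).
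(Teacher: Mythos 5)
Your proposal is correct and follows the paper's own argument: assume $v$ nonconstant, invoke Lemma~\ref{lem2.1} to place the maximum at $(\cos\alpha_0,\sin\alpha_0)$ and the minimum at the antipodal point $(\cos(\alpha_0+\pi),\sin(\alpha_0+\pi))$, and contradict \eqref{5} at $\alpha=\alpha_0$. The alternative monotonicity-plus-periodicity remark is a harmless variant, but the main line of reasoning is essentially identical to the paper's proof.
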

\begin{proof} By way of contradiction,
we assume that $v$ is not a constant function. In particular, $\max\limits_{S^1}v>\min\limits_{S^1}v.$ According to Lemma~\ref{lem2.1}, there exists $\alpha_0\in[0,2\pi)$ such that
$$\left\{\begin{array}{lll}
v((\cos{\alpha_0},\sin{\alpha_0}))=\max\limits_{S^1}v,\\
v((\cos{(\alpha_0+\pi)},\sin{(\alpha_0+\pi)}))=\min\limits_{S^1}v,
\end{array}\right.$$   a contradiction with \eqref{5}.  This completes the proof.
\end{proof}

\subsection{Separable functions in  spheres}
In this subsection, we study the axial symmetry and monotonicity of separable functions in high dimensional  spheres.

First we list the following  basic result,
which indicates that every element in $\mathbf{aff}(A)$ and $\mathbf{co}(A)$ are a combination of  at most $N+1$ points in $A$ if $A\subset \mathbb{R}^N,$ which is standard  and hence is omitted.

\begin{lemma}\label{lem2.2}
Let $A\subset \mathbb{R}^N$. Then we have the following results.
\begin {itemize}
\item [{\rm (i)}]
$\mathbf{aff}(A)=\mathbf{aff}_N(A):=\{\sum\limits^{N+1}_{i=1}\lambda_ix^i| x^i\in A, \lambda_i\in \mathbb{R}
\mbox{ and } \sum\limits^{N+1}_{i=1}\lambda_i=1\}$.
\item [{\rm (ii)}]
$\mathbf{co}(A)=\mathbf{co}_{N}(A):=\{\sum\limits^{N+1}_{i=1}\lambda_ix^i| x^i\in A, \lambda_i\in [0,1]
\mbox{ and } \sum\limits^{N+1}_{i=1}\lambda_i=1\}.$
\end {itemize}
Hence,  $\mathbf{co}(A)$ is bounded and closed if $A$ is bounded and closed.
\end{lemma}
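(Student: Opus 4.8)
The plan is to prove the two identities by the classical Carathéodory reduction argument, treating the affine case first and then deducing the convex case, or running them in parallel. For part (i), the inclusion $\mathbf{aff}_N(A)\subseteq\mathbf{aff}(A)$ is trivial since the right-hand side of the definition of $\mathbf{aff}_N(A)$ is a special case (taking $m=N+1$, allowing repeats or zero coefficients). For the reverse inclusion, I would take an arbitrary point $x=\sum_{i=1}^m\lambda_i x^i$ with $\sum\lambda_i=1$ and $m$ minimal among all such representations of $x$; the goal is to show $m\le N+1$. Suppose $m\ge N+2$. Then the $m-1\ge N+1$ vectors $x^2-x^1,\dots,x^m-x^1$ are linearly dependent in $\mathbb{R}^N$, so there exist scalars $\mu_2,\dots,\mu_m$, not all zero, with $\sum_{i=2}^m\mu_i(x^i-x^1)=0$; setting $\mu_1:=-\sum_{i=2}^m\mu_i$ gives $\sum_{i=1}^m\mu_i x^i=0$ and $\sum_{i=1}^m\mu_i=0$ with the $\mu_i$ not all zero. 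Then for any $t\in\mathbb{R}$, $x=\sum_{i=1}^m(\lambda_i-t\mu_i)x^i$ and $\sum_i(\lambda_i-t\mu_i)=1$, so we have a new representation; choosing $t$ appropriately we can kill one coefficient, contradicting minimality of $m$. This gives $m\le N+1$, hence $\mathbf{aff}(A)\subseteq\mathbf{aff}_N(A)$.

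For part (ii), the same scheme applies with the extra bookkeeping that coefficients must stay in $[0,1]$. Given $x=\sum_{i=1}^l\lambda_i x^i$ with $\lambda_i\ge 0$, $\sum\lambda_i=1$, and $l$ minimal, we may assume all $\lambda_i>0$ (drop zero terms). If $l\ge N+2$, produce $\mu_i$ as above with $\sum\mu_i=0$, $\sum\mu_i x^i=0$, not all zero; since $\sum\mu_i=0$ and they are not all zero, at least one $\mu_i$ is strictly positive. Let $t^*:=\min\{\lambda_i/\mu_i:\mu_i>0\}>0$ and observe $\lambda_i-t^*\mu_i\ge 0$ for all $i$ (for $\mu_i\le 0$ this is clear, for $\mu_i>0$ it follows from the choice of $t^*$), while $\lambda_i-t^*\mu_i=0$ for the index attaining the minimum. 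The new coefficients are nonnegative, sum to $1$, and each automatically lies in $[0,1]$ since they are nonnegative and sum to $1$; we have strictly fewer nonzero terms, contradicting minimality. Hence $l\le N+1$, giving $\mathbf{co}(A)=\mathbf{co}_N(A)$.

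Finally, the boundedness and closedness assertion: if $A$ is bounded, then every point of $\mathbf{co}_N(A)$ is a convex combination of $N+1$ points of $A$, hence lies in $B_\rho$ where $\rho:=\sup_{a\in A}\|a\|<\infty$, so $\mathbf{co}(A)=\mathbf{co}_N(A)$ is bounded. For closedness when $A$ is also closed (hence compact), consider a sequence $x^{(k)}=\sum_{i=1}^{N+1}\lambda_i^{(k)}a_i^{(k)}\to x$; by compactness of the simplex $\{\lambda\in[0,1]^{N+1}:\sum\lambda_i=1\}$ and of $A^{N+1}$, pass to a subsequence along which $\lambda_i^{(k)}\to\lambda_i$ and $a_i^{(k)}\to a_i\in A$, so $x=\sum_i\lambda_i a_i\in\mathbf{co}(A)$. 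There is no serious obstacle here: the only mild subtlety is ensuring in part (ii) that the perturbed coefficients remain admissible, which is exactly what the choice $t^*=\min\{\lambda_i/\mu_i:\mu_i>0\}$ guarantees, together with the observation that nonnegative reals summing to $1$ automatically lie in $[0,1]$. Since the paper explicitly labels this lemma "standard and hence omitted," I would in fact present it only in this condensed form.
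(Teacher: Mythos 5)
Your proof is correct: it is exactly the classical Carath\'eodory reduction argument (linear/affine dependence to eliminate one coefficient, with $t^*=\min\{\lambda_i/\mu_i:\mu_i>0\}$ in the convex case, plus compactness of the simplex and of $A^{N+1}$ for closedness), which is precisely the ``standard'' proof the paper invokes when it omits the argument for Lemma~\ref{lem2.2}. The only cosmetic point is that a representation with fewer than $N+1$ terms should be padded with zero coefficients (using any point of $A$) to match the exact form of $\mathbf{aff}_N(A)$ and $\mathbf{co}_N(A)$, which you already implicitly allow.
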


Now we introduce the definition of separable functions in  spheres $S\subset \mathbb{R}^N$.
\begin{defn}\label{defn1}
Assume $2\leq k \leq N$ and  $S\subset \mathbb{R}^N$ is   a $k-1$ dimensional sphere. We say $u\in C(S,\mathbb{R})$ is
separable, if for any open half space $H\subset \mathbb{R}^N$ with $x^*\in \partial H$ and $\sigma_Hx\in S$ for all
$x\in S,$ there holds
$$\mbox{either $u(x)\geq u(\sigma_Hx)$ for all $x\in H\bigcap S$ or $u(x)\leq u(\sigma_Hx)$ for all $x\in H\bigcap S$.} $$
Here $x^*$ is the center of the ball $\mathbf{co } (S) $.
\end{defn}
We can also define separable functions in   spheres in another way.
\begin{defn}\label{defn2}
Assume $2\leq k \leq N$ and  $S\subset \mathbb{R}^N$ is   a $k-1$ dimensional sphere. We say $u\in C(S,\mathbb{R})$ is
separable, if there exist $r>0$, $b\in \mathbb{R}^N$, and $M\in O(N)$ such that
$$MS+b=rS^{k-1}\times \{0_{N-k}\}\subset \mathbb{R}^N, \ MV+b=\mathbb{R}^{k}\times \{0_{N-k}\}\subset \mathbb{R}^N$$
and  $\tilde{u}$ is separable in $S^{k-1}\subseteq \mathbb{R}^k$ in the sense of Definition~\ref{defn1}. Here
\begin{equation}\label{44}
V:=\mathbf{aff}(S)=\{\sum^{k+1}_{i=1}\lambda_ix^i| x^i\in S, \lambda_i\in \mathbb{R} \mbox{ and } \sum^{k+1}_{i=1}\lambda_i=1\},
\end{equation}
\begin{equation}\label{45}
\tilde{u}: S^{k-1}\ni x\mapsto u_M((rx,0_{N-k})-b)=u(M^{-1}((rx,0_{N-k})-b))\in (0,\infty).
\end{equation}
\end{defn}

It is obvious that Definition \ref{defn1} is equivalent to  Definition \ref{defn2}.  As a result, we say $u\in C(S^{N-1},(0,\infty))$ be  a  separable function, however, we don't have to emphasize the way we use the definition.

The next lemma is vital  to
investigate some basic properties of separable functions in spheres.

\begin{lemma}\label{lem2.3}
Let $u\in C(S^{N-1},(0,\infty))$ be  a  separable function in  $S^{N-1}$ and $V$ be a $k\in[1,N-1]$ dimensional hyperplane.  If $(V\bigcap S^{N-1})^{\#}>1$, then the following statements are true:
\begin {itemize}
\item[\rm{(i)}]
 $V\bigcap S^{N-1}$ is a $k-1$ dimensional sphere;
\item[\rm{(ii)}]  $u|_{S^{N-1}\bigcap V}$ is separable in $S^{N-1}\bigcap V$.
\end {itemize}
\noindent Here  $(V\bigcap S^{N-1})^{\#}$ represents the cardinality of elements contained in $V\bigcap S^{N-1}$.
\end{lemma}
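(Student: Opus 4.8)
The plan is to reduce the statement to the definition of separability of $u$ in $S^{N-1}$ (Definition~\ref{defn1}) by realizing every relevant reflection on the slice $S^{N-1}\cap V$ as the restriction of a reflection of the ambient sphere $S^{N-1}$. For part (i), I would argue as follows. Since $(V\cap S^{N-1})^\# > 1$ and $V$ is a $k$-dimensional affine subspace, the intersection $V\cap S^{N-1}$ is the set of solutions on an affine $k$-plane of the equation $\|x\|=1$; writing $x = p + w$ with $p$ the orthogonal projection of $0$ onto $V$ and $w$ ranging over the $k$-dimensional linear subspace $V-p$ parallel to $V$, this becomes $\|w\|^2 = 1 - \|p\|^2 =: \rho^2$. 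Because there are at least two points, $\rho^2 > 0$, so $V\cap S^{N-1}$ is exactly the sphere of radius $\rho$ centered at $p$ inside the $k$-plane $V$, i.e. a $(k-1)$-dimensional sphere. (In particular $p$ is its center, and $p\in V$, which I will use in part (ii).)

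For part (ii), write $S := S^{N-1}\cap V$ with center $p$. I need to check Definition~\ref{defn1} for the function $u|_S$ on the $(k-1)$-sphere $S\subset V\cong\mathbb{R}^k$. So let $H'$ be an open half-space of $V$ (affine hyperplane $\partial H'$ inside $V$) with $p\in\partial H'$ and $\sigma_{H'}S = S$; I must produce the dichotomy \eqref{47} for $u|_S$. The key step is to extend $\sigma_{H'}$ to an ambient reflection. Let $W := (V-p)$ be the $k$-dimensional direction space of $V$, and $W^\perp$ its orthogonal complement in $\mathbb{R}^N$. The hyperplane $\partial H'$ of $V$ has the form $p + (W_0)$ for some $(k-1)$-dimensional subspace $W_0\subset W$; let $n\in W$ be a unit normal to $W_0$ inside $W$. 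Then define $\widetilde H\subset\mathbb{R}^N$ to be the open half-space whose boundary hyperplane is $\partial\widetilde H := \{x\in\mathbb{R}^N : \langle x - p, n\rangle = 0\}$ (so $\partial\widetilde H \supset \partial H' + W^\perp$, and in particular $0\in\partial\widetilde H$ since $0 - p = -p\in W$ is orthogonal to... — careful: $p$ need not be orthogonal to $n$). Here is the subtlety: I actually need $0\in\partial\widetilde H$ (equivalently, the reflection fixes the center of $S^{N-1}$) so that $\sigma_{\widetilde H}$ maps $S^{N-1}$ to itself. Since $p$ is the projection of $0$ onto $V$, the vector $0 - p$ is orthogonal to $W$, hence orthogonal to $n\in W$; therefore $\langle 0 - p, n\rangle = 0$, so indeed $0\in\partial\widetilde H$. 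Thus $\sigma_{\widetilde H}$ is an orthogonal reflection of $\mathbb{R}^N$ fixing $0$, so $\sigma_{\widetilde H}(S^{N-1}) = S^{N-1}$, and since $\sigma_{\widetilde H}$ fixes $W^\perp$ pointwise and restricts on $V$ to exactly the reflection $\sigma_{H'}$, it maps $V$ to itself and hence $S = S^{N-1}\cap V$ to itself; its restriction to $S$ is $\sigma_{H'}$. Consequently the center of $\mathbf{co}(S^{N-1})$, namely $0$, lies on $\partial\widetilde H$, so $\widetilde H$ is an admissible half-space in Definition~\ref{defn1} for $u$ on $S^{N-1}$.

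Now apply separability of $u$ in $S^{N-1}$ to $\widetilde H$: either $u(x)\ge u(\sigma_{\widetilde H}x)$ for all $x\in \widetilde H\cap S^{N-1}$, or $u(x)\le u(\sigma_{\widetilde H}x)$ for all such $x$. Restricting to $x\in S = S^{N-1}\cap V$, and noting $\widetilde H\cap V = H'$ (because $\partial\widetilde H\cap V = \partial H'$ and the chosen sides match) and $\sigma_{\widetilde H}|_S = \sigma_{H'}$, we get precisely the dichotomy: either $u|_S(x)\ge u|_S(\sigma_{H'}x)$ for all $x\in H'\cap S$, or the reverse for all such $x$. This is exactly the condition required of $u|_S$ in Definition~\ref{defn1}, so $u|_S$ is separable in $S$. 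I expect the only real obstacle to be the bookkeeping in the extension step — verifying that $p$ is the center of the slice sphere, that $0-p\perp W$ forces $0\in\partial\widetilde H$, and that the half-space sides are compatible so that $\widetilde H\cap V = H'$ — all of which are elementary but need to be written carefully; everything else is a direct unpacking of definitions.
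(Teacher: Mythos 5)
Your proposal is correct and follows essentially the same route as the paper: part (i) is the same computation (the center of the slice sphere is the foot of the perpendicular from $O$ to $V$, so $\|w\|^2=1-\|p\|^2>0$), and part (ii) rests on the same key orthogonality $\overrightarrow{Op}\perp V$ to place the relevant reflecting hyperplane through the origin and then restrict the ambient separability of $u$ on $S^{N-1}$. The only difference is one of direction and bookkeeping: the paper takes an arbitrary ambient half-space admissible in the sense of Definition~\ref{defn1} and shows its boundary must already contain $O$, whereas you start from a half-space of the affine hull $V$ (the viewpoint of Definition~\ref{defn2}) and extend it to such an ambient half-space, which is covered by the equivalence of the two definitions that the paper declares obvious.
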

\begin{proof}
(i) Take $b\in V$. Then $V-b$ is a $k$ dimensional linear subspace and thus there exists $M\in O(N)$ such that
$M(V-b)=\mathbb{R}^k\times \{0_{N-k}\}\subset\mathbb{R}^N$,
which implies that $M(V)=\mathbb{R}^k\times \{0_{N-k}\}+Mb.$
Let $Mb:=(a_1,a_2,\cdots,a_N).$ Then $MV=\mathbb{R}^k\times\{(a_{k+1},\cdots,a_N)\}.$ Note that  $MS^{N-1}=S^{N-1}$
and
 $$V\bigcap S^{N-1}=M^{-1}((\mathbb{R}^k\times\{(a_{k+1},\cdots,a_N)\})\bigcap S^{N-1}).$$
Hence, it suffices to prove that $(\mathbb{R}^k\times\{(a_{k+1},\cdots,a_N)\})\bigcap S^{N-1}$
is a $k-1$ dimensional sphere. Indeed, we may conclude that $a^2_{k+1}+\cdots+a^2_N<1.$
Then
\begin{displaymath}
\begin{array}{lll}
(\mathbb{R}^k\times\{(a_{k+1},\cdots,a_N)\})\bigcap S^{N-1}\\
=\{(x_1,\cdots,x_N)\in \mathbb{R}^N|x^2_1+\cdots+x^2_k+a^2_{k+1}+\cdots+a^2_N=1\}\\
=\{(x_1,\cdots,x_k)\in \mathbb{R}^k|x^2_1+\cdots+x^2_k=1-a^2_{k+1}-\cdots-a^2_N\}\times\{(a_{k+1},\cdots,a_N)\}
\end{array}
\end{displaymath} is a $k-1$ dimensional sphere.
So, the proof of (i) is complete.

(ii) By  (i), we see $V\bigcap S^{N-1}$ is a $k-1$ dimensional sphere whose center is denoted by $x^*_0.$  Then the vector $\overrightarrow{Ox_0^*}\bot {V}.$
Fix an open half space $H\subset \mathbb{R}^N$ with $x_0^*\in \partial H$ and $\sigma_H(V\bigcap S^{N-1})\subseteq  V\bigcap S^{N-1}$. It follows that the vector $\overrightarrow{x\sigma_H x}\bot \partial H$ and
$ \overrightarrow{x\sigma_H x}\bot \overrightarrow{Ox_0^*}$ for all $x\in V\bigcap S^{N-1}$. This, combined with $dim(\partial H)=N-1,$ implies that $\overrightarrow{Ox_0^*}// \partial H$. By $x_0^*\in \partial H$, we deduce that $O\in \partial H$. Applying the fact
that $u$ satisfies separability in $S^{N-1}$, we have
$$\mbox{either $u(x)\geq u(\sigma_Hx)$ for all $H\bigcap S^{N-1}$ or $u(x)\leq u(\sigma_Hx)$ for all $H\bigcap S^{N-1}$.}$$
In particular,
either $u(x)\geq u(\sigma_Hx)$ for all $H\bigcap (S^{N-1}\bigcap V)$ or $u(x)\leq u(\sigma_Hx)$ for all $H\bigcap (S^{N-1}\bigcap V)$, that is, the statement (ii) holds.
\end{proof}

The following is devoted to the proof of symmetry and monotonicity of separable functions in $S^{N-1}$.
\begin{lemma}\label{lem2.4}
Let $N\geq 2$ and $u\in C(S^{N-1},(0,\infty))$. Assume that $u$ is nonconstant and  separable in $S^{N-1}$. Then  $u$ is
 axially symmetric and monotone in $S^{N-1}$. To be precise, there exist $M\in O(N)$  and $h_1,h_2\in[-1,1]$ such that
 \begin {itemize}
\item[\rm{(i)}] $h_1>h_2;$
\item[\rm{(ii)}] $u_M^{-1}(\max\limits_{S^{N-1}}u_M)=\{x\in S^{N-1}|x_N\geq h_1\}$ and
$$u_M^{-1}(\min\limits_{S^{N-1}}u_M)=\{x\in S^{N-1}|x_N\leq h_2\}.$$
\item[\rm{(iii)}] For any fixed $h\in [-1,1]$, $u_M|_{\{x\in S^{N-1}|x_N=h\}}$ is constant.
\item[\rm{(iv)}] $u_M (0_{N-2},\cos\alpha,\sin\alpha)$ is decreasing with respect to $\alpha\in[\frac{\pi}{2},\frac{3\pi}{2}].$
    \end {itemize}
\end{lemma}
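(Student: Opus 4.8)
The plan is to reduce everything to the planar case, Lemma~\ref{lem2.1}, by restricting $u$ to two--dimensional great circles: for a $2$--plane $V\subseteq\mathbb{R}^N$ with $0\in V$ the set $\Gamma_V:=S^{N-1}\cap V$ is a great circle, and by Lemma~\ref{lem2.3}(ii) the restriction $u|_{\Gamma_V}$ is separable in $\Gamma_V$ in the planar sense.

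\emph{Step 1 (an antipodal extremal pair).} First I would show that $u$ attains $\max_{S^{N-1}}u$ and $\min_{S^{N-1}}u$ at two antipodal points. Choose $x_0,y_0\in S^{N-1}$ realising the maximum and the minimum; as $u$ is nonconstant, $x_0\neq y_0$, and if $y_0=-x_0$ there is nothing more to do. Otherwise $x_0,y_0$ are linearly independent, the great circle $\Gamma:=\Gamma_{\mathrm{span}(x_0,y_0)}$ carries $\max_{S^{N-1}}u$ (at $x_0$) and $\min_{S^{N-1}}u$ (at $y_0$), hence $u|_{\Gamma}$ is separable and nonconstant, and the argument used in Step~1 of the proof of Lemma~\ref{lem2.1} produces a diameter of $\Gamma$ with a point of $u^{-1}(\max_{S^{N-1}}u)$ at one end and a point of $u^{-1}(\min_{S^{N-1}}u)$ at the other. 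Taking $M\in O(N)$ that maps the former to $e_N:=(0_{N-1},1)$ and replacing $u$ by $u_M$, we may assume $u(e_N)=\max_{S^{N-1}}u$ and $u(-e_N)=\min_{S^{N-1}}u$.

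\emph{Step 2 (level sets are polar caps; the crux).} Put $A:=u^{-1}(\max_{S^{N-1}}u)$ and $B:=u^{-1}(\min_{S^{N-1}}u)$, which are disjoint, nonempty and compact with $e_N\in A$, $-e_N\in B$. The technical heart is the claim that, for every $t$,
\begin{equation*}
\{x\in S^{N-1}:u(x)\geq t\}=\{x\in S^{N-1}:x_N\geq h(t)\}
\end{equation*}
for some $h(t)\in[-1,1]$ (with $h$ nonincreasing on $(\min_{S^{N-1}}u,\max_{S^{N-1}}u]$), equivalently that $u$ is a nonincreasing function of $x_N$ alone. I would prove this by induction on $N$, the base case $N=2$ being Lemma~\ref{lem2.1}, combining two ingredients. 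The convex--analytic ingredient is that $\mathbf{co}(A)\cap S^{N-1}=A$: by Lemma~\ref{lem2.2} any $z\in\mathbf{co}(A)$ equals $\sum_{i=1}^{N+1}\lambda_ix^i$ with $x^i\in A$, $\lambda_i\in[0,1]$, $\sum_i\lambda_i=1$, and if moreover $|z|=1$ then $1=|z|^2=\sum_{i,j}\lambda_i\lambda_j\langle x^i,x^j\rangle\leq\sum_{i,j}\lambda_i\lambda_j=1$, which forces $\langle x^i,x^j\rangle=1$ whenever $\lambda_i\lambda_j>0$, i.e. $z\in A$; thus $A$ is the trace on $S^{N-1}$ of the compact convex body $\mathbf{co}(A)$, and the same holds for each superlevel and sublevel set. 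The separability ingredient forces this body to be a half--ball: at each point $q$ of the ``free boundary'' $\partial\,\mathbf{co}(A)\cap\mathrm{int}\,B_1$ a supporting hyperplane of $\mathbf{co}(A)$ passes through some $x\in A$, and inserting the corresponding half--space $H$ (supported on a hyperplane through $0$, so that $\sigma_H(S^{N-1})=S^{N-1}$) into the definition of separability, while using Corollary~\ref{cor2.1} along the great circles through $e_N$ and the inductive hypothesis on the hyperplane sections through $e_N$ (which controls the latitudinal directions via Lemma~\ref{lem2.3}), one shows every such supporting hyperplane is orthogonal to $e_N$; hence $\mathbf{co}(A)=\{x\in B_1:x_N\geq h_1\}$ and $A=\{x_N\geq h_1\}$. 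The same argument applied to $\tilde u:=1+\max_{S^{N-1}}u-u$ shows $B$ is a polar cap, and restricting $u$ to the great circle joining the two cap centres and invoking Corollary~\ref{cor2.2}/Lemma~\ref{lem2.1} (a nonconstant separable function on $S^1$ cannot take its maximum and minimum at non--antipodal points) shows $B=\{x_N\leq h_2\}$; running the argument on all superlevel sets gives the displayed identity. I expect this step to be the main obstacle: extracting flatness of the free boundary from ``sufficiently many reflections in hyperplanes through the origin'' is exactly where the convex geometry has to be married carefully to the separability condition, and it plays the role here that Steps~2--4 of the proof of Lemma~\ref{lem2.1} play there.

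\emph{Step 3 (conclusion).} Given Step~2 the assertions follow at once. Disjointness of $A$ and $B$ gives $h_1>h_2$, which is (i); the explicit forms $A=u_M^{-1}(\max_{S^{N-1}}u_M)=\{x_N\geq h_1\}$ and $B=u_M^{-1}(\min_{S^{N-1}}u_M)=\{x_N\leq h_2\}$ are (ii); the fact that $u_M$ depends only on $x_N$ gives its constancy on each slice $\{x_N=h\}$, which is (iii); and for (iv) observe that the last coordinate of $(0_{N-2},\cos\alpha,\sin\alpha)$ is $\sin\alpha$, strictly decreasing from $1$ to $-1$ as $\alpha$ runs over $[\tfrac{\pi}{2},\tfrac{3\pi}{2}]$, so $\alpha\mapsto u_M(0_{N-2},\cos\alpha,\sin\alpha)$ is nonincreasing there and takes the values $\max_{S^{N-1}}u$ at $\alpha=\tfrac{\pi}{2}$ and $\min_{S^{N-1}}u<\max_{S^{N-1}}u$ at $\alpha=\tfrac{3\pi}{2}$, hence is nonconstant, i.e.\ decreasing.
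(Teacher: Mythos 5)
Your Step 2 is where the lemma actually lives, and it is not proved: you state the key claim (every superlevel set is a polar cap, equivalently $u_M$ is a nonincreasing function of $x_N$ alone), describe ingredients, and then concede that ``extracting flatness of the free boundary'' is the main obstacle. That obstacle is precisely the content of the paper's proof, so the proposal as written has a genuine gap rather than an alternative argument. Moreover, the mechanism you sketch is flawed as stated: a supporting hyperplane of $\mathbf{co}(A)$ at a free-boundary point $q\in\partial\,\mathbf{co}(A)\cap \mathrm{int}\,B_1$ will in general \emph{not} pass through the origin, and the separability of $u$ on $S^{N-1}$ (Definition~\ref{defn1}) only allows reflections across hyperplanes through the center of the sphere; your parenthetical ``supported on a hyperplane through $0$'' assumes exactly what is not available, so such hyperplanes cannot be inserted into the separability condition. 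In addition, your assertion that a supporting hyperplane at a free-boundary point ``passes through some $x\in A$'' itself requires proof, and in fact one needs it to meet $A$ in at least \emph{two} points before the inductive hypothesis can be brought to bear on the slice; in the paper this is a separate, nontrivial argument (the claims $A\cap\partial H\neq\emptyset$ and $A\cap\partial H\neq\{y^*\}$).

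For comparison, the paper never feeds the separating hyperplane into the separability condition on $S^{N-1}$. It uses the Hahn--Banach separation only to produce an affine hyperplane $\partial H$ whose slice $S=\partial H\cap S^{N-1}$ is a lower-dimensional sphere containing at least two maximum points; Lemma~\ref{lem2.3} guarantees that $u$ restricted to any such slice is separable (there the admissible reflection hyperplanes automatically contain the origin), so the inductive hypothesis shows the maximum set inside the slice is a cap. The upgrade from this to ``the whole slice lies in $A$'' and then to ``$A$ is the polar cap $\{x_N\geq h\}$'' is done by explicit two-dimensional sections: choosing the points $z^*_{\pm}$, $z^{\epsilon^*}_{\pm}$ and later $w^*_{\pm},x^*$, passing to the circles cut out by their affine spans, and invoking Lemma~\ref{lem2.1} and Corollary~\ref{cor2.1} to reach contradictions. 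The antipodality of the cap centers and statements (iii)--(iv) are then obtained by further great-circle and small-circle arguments (the latter via Corollary~\ref{cor2.2}), not by the all-superlevel-set claim you assert. Your correct observations --- the antipodal extremal pair in Step 1, and the identity $\mathbf{co}(A)\cap S^{N-1}=A$ --- are compatible with this scheme but do not substitute for the missing geometric core; to complete your argument you would need essentially the paper's slice-and-circle analysis (or an honest proof that the free boundary of $\mathbf{co}(A)$ is a hyperplane orthogonal to $e_N$ using only reflections through the origin), and separately justify applying the cap structure to intermediate superlevel sets (e.g.\ via truncations $\min(u,t)$, which you do not mention).
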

\begin{proof}
Since  $u$ is not constant, we have $A=u^{-1}(\max\limits_{S^{N-1}}u)\neq \emptyset$ and $B=u^{-1}(\min\limits_{S^{N-1}}u)\neq \emptyset.$

 We shall finish the proof by the following two steps.

\noindent \textbf{Step 1.} We  prove that $A$ is a single set or an  $N-1$ dimensional spherical cap as well as $B$.

We shall argue it  by inductive  method.

It follows from Lemma \ref{lem2.1} that the conclusion holds when $N=2$.

We assume that  the conclusion holds for $2\leq N\leq k$.

Now we prove that the conclusion is also valid for   $N=k+1.$
Without loss of generality, we assume that $A$ is not a single set. Note that $A\neq S^{N-1}$.
Then by Lemma \ref{lem2.2}, $\mathbf{co}(A)=\{\sum\limits^{N+1}\limits_{i=1}\lambda_ix^i| x^i\in A, \lambda_i\in [0,1] \mbox{ and } \sum\limits^{N+1}\limits_{i=1}\lambda_i=1\}$ and $\mathbf{co}(A)$ is
a closed convex set. Clearly, $\mathbf{co}(A)\subsetneqq B_1$ and $A\subsetneqq\partial (\mathbf{co}(A)),$ where $B_1$ is the unit closed ball with the center at the origin.  Take $x^*=(x^*_1,\cdots,x^*_N)\in \partial (\mathbf{co}(A))\backslash A$. By using the theorem of the separation of convex sets in \cite[Chapter 3]{rudin}, we can find an $N$ dimensional open half space $H$ such that $x^*\in \partial H$, $\mathbf{co}(A)
\bigcap H=\emptyset,$ and thus $A
\bigcap H=\emptyset.$ Then there exists
$\tilde{M}\in O(N)$ and $b\in \mathbb{R}^N$ such that
$\hat{H}:=\tilde{M}H+b=\mathbb{R}^{N-1}\times(-\infty,0)$. Let us define an affine transformation $T:\mathbb{R}^{N}\ni x\mapsto \tilde{M}x+b\in \mathbb{R}^{N}$.
So $\partial (T(H))=\mathbb{R}^{N-1}\times\{0\}$ and $\mathbf{co}(T(A))\subset\mathbb{R}^{N-1}\times[0,\infty)$.

Next we  show
$A\bigcap \partial H\neq \emptyset.$ Otherwise, $A\bigcap \partial H=\emptyset.$ Then $cl (H)\bigcap A=\emptyset$  and  hence
by the convexity of $\mathbb{R}^N\setminus cl (H)$, we have $cl (H)\bigcap \textbf{co}(A)=\emptyset$, a contradiction with the fact that  $x^*\in \textbf{co} (A)\bigcap \partial H.$

Let $y^*=(y^*_1,\cdots,y^*_N)\in A\bigcap \partial H.$
In view of $x^*\in \partial H \bigcap(\partial (\mathbf{co}(A))\backslash A)$ and $y^*\in\partial H\bigcap A,$ we see that $x^*\neq y^*.$

Now we claim that
$A\bigcap \partial H\neq \{y^*\}.$ Suppose on the contrary that $A\bigcap \partial H= \{y^*\}.$ It follows that $(T(x^*))_N=(T(y^*))_N=0$ and $x_N>0$ for any $x=(x_1,\cdots,x_N)\in T(A)\backslash\{T(y^*)\}.$ By $T(x^*)\in T(\mathbf{co}(A)),$  there  exists $x^i\in T(A)$ and $\lambda_i\in[0,1]$ with $\sum\limits^{N+1}\limits_{i=1}\lambda_i=1$
such that $T(x^*)=\sum\limits^{N+1}\limits_{i=1}\lambda_ix^i$ and so $0=(T(x^*))_N=\sum\limits^{N+1}\limits_{i=1}\lambda_ix_N^i\geq 0.$ Hence  $\lambda_ix_N^i=0$ for all $i=1,\cdots,N+1.$ Clearly, $\lambda_i=0$ or $x^i_N=0$
 for all $i=1,\cdots,N+1,$   which, together with  $T(A)\bigcap  (\mathbb{R}^{N-1}\times\{0\})=T(A)\bigcap \partial (T(H))= \{T(y^*)\}$, implies  $T(x^*)=T(y^*)$ and thus $x^*=y^*$, a contradiction to $x^*\neq y^*.$
 Therefore, the claim holds and thus there exists $y^{**}\in (A\bigcap \partial H)\backslash\{y^*\}.$

 Let $S=\partial H\bigcap S^{N-1}.$ Clearly, $y^*,y^{**}\in A\bigcap S$. By Lemma \ref{lem2.3}, $S$ is an $N-2$ dimensional sphere and $u$ is
 separable in $S.$  Let $\tilde{x},\tilde{r}$ be the center and radius of S, respectively, and let us define $\tilde{T}:S^{N-2}\ni z\mapsto T^{-1}(T(\tilde{x})+\tilde{r}(z,0))\in S$
and $\tilde{u}:S^{N-2}\ni z\mapsto  u(\tilde{T}(z))\in (0,\infty)$. Then we easily see that $\tilde{T}^{-1}(y^*),\tilde{T}^{-1}(y^{**})\in S^{N-2}\bigcap \tilde{u}^{-1}(\max\limits_{S^{N-2}}\tilde{u})$  and $\tilde{u}$ is
 separable in $S^{N-2}.$ By applying the inductive hypothesis to $\tilde{u}|_{S^{N-2}}$, we see that
 $\tilde{u}^{-1}(\max\limits_{S^{N-2}}\tilde{u})$ is an $N-2$ dimensional sphere cap and hence $u^{-1}(\max\limits_{S}u)$  is an $N-2$ dimensional sphere cap denoted by  $S^*.$

 Without loss of generality, we can assume, in the remaining proof, that there exist $h\in(-1,1)$ and $\delta\in[-\sqrt{1-h^2},\sqrt{1-h^2})$ such that  $ H=\mathbb{R}^{N-1}\times(-\infty,h),$ $A\subset \mathbb{R}^{N-1}\times[h,\infty), $
 $S=\{x\in S^{N-1}|x_N=h\}$ and
   $u^{-1}(\max\limits_{S}u)=\{x\in S|x_{N-1}\geq \delta\}.$

   Next we show $\delta=-\sqrt{1-h^2}$, that is, $S^*=S.$
  Otherwise $|\delta|<\sqrt{1-h^2}$. Let
  $$z^*_+=0_{N-3}\times (\sqrt{1-h^2-\delta^2},\delta,h),\ z^*_-=0_{N-3}\times(-\sqrt{1-h^2-\delta^2},\delta,h).$$
 In addition, for $\epsilon\geq h$, let $z^\epsilon_+=0_{N-2}\times (-\sqrt{1-\epsilon^2},\epsilon)$  and $z^\epsilon_-$ be the point at which  $S^{N-1}$ intersects the line containing $z^\epsilon_+$ and the point $0_{N-2}\times(\delta,h)$ .
Let us define $$f_{\pm}:[h,1]\ni \epsilon\mapsto u(z^\epsilon_{\pm})\in(0,\infty).$$ It is easy to check that $f_{+}(h)<\max\limits_{S}u, $ $(z^\epsilon_{-})_N<\delta$ for all $\epsilon\in (h,1)$, and $f_{+}$ is continuous and $f_{\pm}(h)<\max\limits_{S}u.$ So there exists $\epsilon^*>h$ such that
  \begin{equation}\label{ll}
   u(z^{\epsilon^*}_{\pm})=f_{\pm}(\epsilon^*)<\max\limits_{S}u=u(z^*_{\pm}).
   \end{equation}
  Notice that the line segments $\overline{z^*_{+}z^*_{-}}$ and $\overline{z^{\epsilon^*}_{+}z^{\epsilon^*}_{-}}$ are coplanar (see Figure \ref{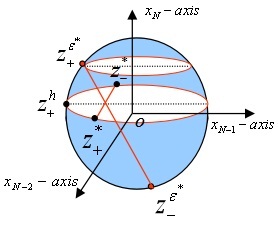}).
   \begin{figure}[!ht]
  \center
  \includegraphics[height=5cm,width=6cm]{figure4.jpg}
  \caption{schematic diagram  for the partial  proof of \textbf{Step 1}}\label{figure4.jpg}
\end{figure}

\noindent Let $$V=\mathbf{aff} (\{z^*_{\pm},z^{\epsilon^*}_{\pm}\})
  =\{\lambda_1z^*_{+}+\lambda_2z^*_{-}+\lambda_3z^{\epsilon^*}_{+}
  +\lambda_4z^{\epsilon^*}_{-}
  |\lambda_i\in\mathbb{R},\sum_{i=1}^{4}\lambda_i=1\}.$$
  Then $\mathbf{dim} V=2$  and
   $\max\limits_{V\bigcap S^{N-1}}u=u(z^*_{\pm})>u(z^{\epsilon^*}_{\pm})$. By applying Lemma \ref{lem2.3} and Lemma \ref{lem2.1}, we can obtain that $V\bigcap S^{N-1}$ is a
  circle and $u^{-1}(\max\limits_{V\bigcap S^{N-1}}u)$ is an arc $\Lambda $ containing
   $z^*_{\pm}$.  This implies that $z^{\epsilon^*}_{+}\in \Lambda $ or $z^{\epsilon^*}_{-}\in \Lambda$, a contradiction  to \eqref{ll}.
  Hence $S^*=S$, that is,
  $$\{x\in S^{N-1}|x_N=h\}\subset A\subset\{x\in S^{N-1}|x_N\geq h\}:=A^*.$$

  Now we shall prove $A=A^*.$ We argue it by contradiction   as follows.
Let $x^*=(x^*_1,\cdots,x^*_N)\in A^*\backslash A,$ $w_{\pm}^*=(0_{N-1},{\pm}\sqrt{1-h^2},h)$, and
$\tilde{V}=\mathbf{aff}(\{w_+^*,w_-^*,x^*\})$.
Then
  $w_+^*,w_-^*\in A\cap \partial H$, $x^*\notin A\setminus cl(H)$, $\mathbf{dim} (\tilde{V})=2$,  and hence $\tilde{V}\cap S^{N-1} \cap H \neq \emptyset$ due to $\mathbf{dim} (\partial H)=N-1$ (see Figure \ref{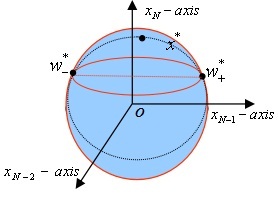}).
  \begin{figure}[!ht]
  \center
  \includegraphics[height=5cm,width=6cm]{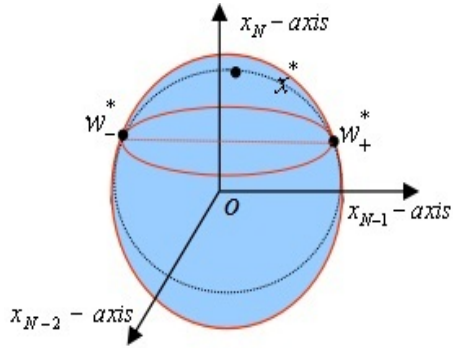}
  \caption{schematic diagram  for the partial  proof of \textbf{Step 1}}\label{figure5.jpg}
\end{figure}
By applying Lemma \ref{lem2.1}, we may obtain that $A\bigcap \tilde{V}\bigcap S^{N-1}$ is an arc $\Gamma$ containing
  $w_+^*,w_-^*.$ It follows from $\tilde{V}\cap S^{N-1} \cap H \neq \emptyset$ that $x^*\in \tilde{V}\cap S^{N-1} \setminus  H \subseteq\Gamma \subseteq A$, a contradiction to $x^*\notin A.$ As a result, we obtain that $A=A^*$ is  an $N-1$ dimensional sphere cap.

By  applying the above discussions to $1+
  \max\limits_{S^{N-1}}u-u$, we obtain that $B$ is a single point or an $N-1$ dimensional spherical cap. This completes the proof of Step 1.

  We denote the centers of two sphere caps  $A$ and  $B$ by $a^*$ and $b^*$, respectively.  We next verify that  $a^*$, $b^*$, and the origin $O$ are collinear. Otherwise, there exists an $N$ dimensional open half space $\tilde{H}$ such that  $a^*,b^*\in \tilde{H}$. Let $V^*=\mathbf{aff} (\{a^*, b^*, O\})$. Then $V^*$ is a two dimensional plane  and  by Lemma \ref{lem2.3},  $V^*\bigcap S^{N-1}$ is a  circle, and  $u|_{V^*\bigcap S^{N-1}}$ is nonconstant and  separable. Thus, it follows from the proof of Step 3 in Lemma \ref{lem2.1} that
  $a^*, b^*, O$ must be collinear, a contradiction. Since  $a^*$, $b^*$, and the origin $O$ must be collinear, we know that there exists $M\in O(N)$ such that $M(a^*)=(0_{N-1},1)$, $M(b^*)=(0_{N-1},-1)$, and hence $u_M$ satisfies (i) and (ii).

\noindent \textbf{Step 2.}  In this step, we shall prove (iii) and (iv).

We shall finish the proof by distinguishing two cases.

\item{\bf{ Case 1.}} $N=2$.\\
In this case, (iii)  and (iv) follow from Lemma \ref{lem2.1}.

\item{\bf{ Case 2.}} $N\geq3$.

\noindent (iii) Fix $h\in(-1,1).$  Then $\{x\in S^{N-1}|x_N=h\}$ is an $N-2$ dimensional sphere.

Letting $\bar{x},\bar{\bar{x}}\in\{x\in S^{N-1}|x_N=h\}$ be any  pair of symmetric points with respect to $(0_{N-1},h)$, we easily see that
  $W=\mathbf{aff}(\{\bar{x},\bar{\bar{x}},(0_{N-1},1)\})$ is a two dimensional plane and $u_M|_{W\bigcap S^{N-1}}$ is nonconstant.  Thus, by Lemma \ref{lem2.3},  $W\bigcap S^{N-1}$ is a  circle and $u_M|_{W\bigcap S^{N-1}}$ is separable. Note that by (ii),  $(0_{N-1},1)$ and $(0_{N-1},-1)$ are  centers of the arcs $W\bigcap S^{N-1}\bigcap u_M^{-1}(\max\limits_{S^{N-1}}u)$ and  $W\bigcap S^{N-1}\bigcap u_M^{-1}(\min\limits_{S^{N-1}}u)$, respectively.  By  applying Lemma \ref{lem2.1} to $u_M|_{W\bigcap S^{N-1}}$ under some affine transformation, we know that $u_M|_{W\bigcap S^{N-1}}$ is axial symmetric with respect to $x_N-$ axis. In particular, we have
$u_M(\bar{x})=u_M(\bar{\bar{x}}).$

Let  $\bar{x}^*,\bar{\bar{x}}^*\in\{x\in S^{N-1}|x_N=h\}$. Then $W^*=\mathbf{aff}(\{\bar{x}^{*},\bar{\bar{x}}^{*},
   (0_{N-1},h)\})$  is a two dimensional plane and hence by Lemma \ref{lem2.3},  $W^*\bigcap S^{N-1}$ is a  circle and  $u_M|_{W^*\bigcap S^{N-1}}$ is separable.
  These, together with Corollary~\ref{cor2.2} and the fact that $  u_M(x)=u_M(y)$  whence $x,y\in W^*\bigcap S^{N-1}$ are given symmetric pairs with respect to $(0_{N-1},h)$, implies that $u_M|_{W^*\bigcap S^{N-1}}$ is constant. In particular, $u_M(\bar{x}^*)=u_M(\bar{\bar{x}}^*)$. So by the  arbitrariness of $\bar{x}^*,\bar{\bar{x}}^*\in\{x\in S^{N-1}|x_N=h\}$,  we get (iii).

 \noindent (iv) By (ii), (iii), and by applying  Lemma \ref{lem2.1} (iv) to $u_M(0_{N-2},\cdot)|_{S^1}$, we easily see that $u_M (0_{N-2},\cos\alpha,\sin\alpha)$ is decreasing with respect to $\alpha\in[\frac{\pi}{2},\frac{3\pi}{2}].$

The proof is completed.
\end{proof}

\begin{cor}\label{cor2.3}
 Let $H$  be an open half space in $\mathbb{R}^N$ with the origin $O\in \partial H$. Under the assumptions of  Lemma~\ref{lem2.4}, we have the following statements:
 \begin {itemize}
\item[\rm{(i)}] If  $M^{-1}(0,0,\cdots,0,1)\in H,$ then $u(x)\geq u(\sigma_Hx)$ for any $x\in H\bigcap S^{N-1}$;
\item[\rm{(ii)}] If $M^{-1}(0,0,\cdots,0,1)\in\mathbb{R}^N\backslash cl(H) ,$ then $u(x)\leq u(\sigma_Hx)$ for any $x\in H\bigcap S^{N-1}$.
\end{itemize}
\end{cor}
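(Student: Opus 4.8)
The plan is to reduce Corollary~\ref{cor2.3} directly to Lemma~\ref{lem2.4} together with the axial symmetry/monotonicity of the one-dimensional restriction, exploiting that $u_M$ is explicitly described by parts (ii)--(iv) of that lemma. Write $e:=M^{-1}(0_{N-1},1)$ and $-e=M^{-1}(0_{N-1},-1)$, the centers of the max-cap $A$ and min-cap $B$ of $u$. Since $O\in\partial H$, the reflection $\sigma_H$ is a reflection through a hyperplane containing the origin, hence it restricts to an involutive isometry of $S^{N-1}$ that satisfies $\sigma_H(H\cap S^{N-1})=S^{N-1}\setminus cl(H)$; thus $H$ is one of the admissible half spaces in Definition~\ref{defn1} (with $x^*=O$), and separability of $u$ gives that exactly one of the two inequalities in \eqref{47} holds on $H\cap S^{N-1}$. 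So the whole content is to decide \emph{which} alternative occurs, and the claim is that it is determined by which side of $\partial H$ the point $e$ lies on.

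First I would dispose of the degenerate possibility $e\in\partial H$: in that case $-e\in\partial H$ as well (the hyperplane passes through $O$), and neither hypothesis (i) nor (ii) applies, so there is nothing to prove; hence assume $e\notin\partial H$. Suppose, for contradiction, that $e\in H$ but the inequality $u(x)\le u(\sigma_Hx)$ holds for all $x\in H\cap S^{N-1}$. I would exhibit a point of $H\cap S^{N-1}$ where this fails. The natural candidate is the point of the great circle through $e$ and $-e$ that lies in $H\cap S^{N-1}$ and is closest to $e$: more precisely, look at the two-dimensional plane $W=\mathbf{aff}(\{e,-e,x^0\})$ for a suitable $x^0\in\partial H\cap S^{N-1}$ chosen so that $W\cap\partial H$ is a genuine diameter of the circle $W\cap S^{N-1}$ (possible since $\dim\partial H=N-1$ and $O\in\partial H$, so $\partial H\cap W$ is a line through $O$). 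On the circle $C:=W\cap S^{N-1}$, Lemma~\ref{lem2.3} says $u|_C$ is separable, and by Lemma~\ref{lem2.4}(ii) the points $e,-e$ are the centers of the max-arc and min-arc of $u|_C$; then Corollary~\ref{cor2.1} (or equivalently the monotonicity statement of Lemma~\ref{lem2.1}(iv)) applied to the diameter $W\cap\partial H$, across which $e$ lies in the $H$-side, yields $u(x)\ge u(\sigma_Hx)$ for all $x\in C\cap H$, with strict inequality somewhere (because $u|_C$ is nonconstant: it attains $\max_{S^{N-1}}u$ at $e$ and $\min_{S^{N-1}}u$ at $-e$). That strict inequality contradicts the assumed alternative $u(x)\le u(\sigma_Hx)$ on $H\cap S^{N-1}$. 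Hence under (i) the alternative must be $u(x)\ge u(\sigma_Hx)$, proving (i); statement (ii) follows by applying (i) to the complementary half space $\mathbb{R}^N\setminus cl(H)$, or directly by the symmetric argument with the roles of $e$ and $-e$ interchanged (equivalently, replacing $u$ by $1+\max_{S^{N-1}}u-u$, whose max-cap is $B$).

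The only real obstacle is the bookkeeping in choosing the auxiliary plane $W$ so that $W\cap\partial H$ is exactly a diameter of the circle $W\cap S^{N-1}$ and so that the monotone restriction $u|_C$ genuinely "sees" the sign we want: one must make sure $W$ contains both $e,-e$ and meets the hyperplane $\partial H$ in a line through $O$ that is a proper chord, which is a dimension count ($\dim\partial H=N-1\ge\dim W^{\perp}$-type argument) but needs the case $N=2$ handled separately (there $W=\mathbb{R}^2$, $C=S^1$, and the claim is immediate from Corollary~\ref{cor2.1}). Everything else is a direct invocation of Lemma~\ref{lem2.1}(iv), Corollary~\ref{cor2.1}, Lemma~\ref{lem2.3}, and Lemma~\ref{lem2.4}(ii), so the corollary is genuinely a short consequence once the plane $W$ is set up correctly.
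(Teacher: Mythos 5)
Your overall plan (decide which alternative in the separability dichotomy occurs by restricting to a great circle through $\pm e$, $e:=M^{-1}(0_{N-1},1)$) can be made to work, but two steps do not hold up as written. First, the condition you impose on the auxiliary plane $W$ is the wrong one: in order to apply Lemma~\ref{lem2.3} and Corollary~\ref{cor2.1} to the reflection $\sigma_H$ acting on the circle $C=W\cap S^{N-1}$, you need $\sigma_H(C)=C$, i.e.\ $W$ itself must be invariant under $\sigma_H$; for a $2$-plane through $O$ containing $e\notin\partial H$ this happens exactly when the unit normal $\nu$ of $\partial H$ lies in $W$. Merely arranging that $W\cap\partial H$ is a diameter of $C$ is automatic for every $2$-plane through $O$ not contained in $\partial H$ and does \emph{not} give invariance: for $N\geq 3$ a generic choice of $x^0\in\partial H\cap S^{N-1}$ yields $\sigma_H(W)\neq W$, and then $u|_C$ and $\sigma_H$ are unrelated, so the restriction argument collapses. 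The fix is simply $W=\mathrm{span}\{e,\nu\}$ (with the degenerate case $e=\pm\nu$ handled by any plane containing $e$). Second, and this is the real crux, the claim that Corollary~\ref{cor2.1} gives $u\geq u\circ\sigma_H$ on $C\cap H$ \emph{with strict inequality somewhere} ``because $u|_C$ is nonconstant'' is not justified: Corollary~\ref{cor2.1} contains no strictness, and nonconstancy of $u|_C$ by itself does not rule out that $u|_C$ is symmetric about the diameter $W\cap\partial H$ — and if it were, both alternatives would hold simultaneously and no contradiction with the assumed $u\leq u\circ\sigma_H$ arises. What is actually needed is that equality on $C\cap H$ is impossible when $e\in H$, and this follows from the structure results rather than from nonconstancy: if $u|_C=u|_C\circ\sigma_H$, then the maximum set of $u|_C$, which by Lemma~\ref{lem2.4}(ii) is the arc $A\cap C$ centered at $e$ and is a \emph{proper} arc (since $u(-e)=\min<\max$), would be $\sigma_H$-invariant; a proper arc has a unique midpoint, so $\sigma_H(e)=e$, i.e.\ $e\in\partial H$, contradicting $e\in H$. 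Equivalently, the diameter through $\pm e$ is the unique symmetry axis of the nonconstant separable function $u|_C$. Without this supplement your proof has a genuine gap.

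It is also worth noting that the corollary does not require the dichotomy/contradiction machinery at all, which is presumably why the paper states it without proof: by Lemma~\ref{lem2.4}(iii)--(iv), $u_M(x)=g(x_N)$ for a nondecreasing $g$, i.e.\ $u(x)=g(\langle x,a\rangle)$ with $a=M^{-1}(0_{N-1},1)$; writing $H=\{x:\langle x,\nu\rangle>0\}$, for $x\in H\cap S^{N-1}$ one has $\langle \sigma_Hx,a\rangle=\langle x,a\rangle-2\langle x,\nu\rangle\langle a,\nu\rangle$, which is $\leq\langle x,a\rangle$ when $a\in H$ and $\geq\langle x,a\rangle$ when $a\in\mathbb{R}^N\setminus cl(H)$, so monotonicity of $g$ gives (i) and (ii) directly. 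Two minor points: separability yields ``at least one'' (not ``exactly one'') of the two inequalities, and your deduction of (ii) from (i) via the complementary half space is fine.
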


\subsection{Separable functions in balls}
In this subsection, we consider the axial symmetry and monotonicity of separable functions in high dimensional balls.

We first introduce the definition of separable functions in $B_R.$
\begin{defn}
A function $u:B_R\to\mathbb{R}$ is said to be separable if
for any  open half-space  $H\subset \mathbb{R}^N$ with $O\in \partial H$,
\begin{equation}\label{11}
\mbox{ either $u(x)\geq u(\sigma_Hx)$ for all $x\in H\bigcap B_R $ or
$u(x)\leq u(\sigma_Hx)$ for all $x\in H\bigcap B_R$. }
\end{equation}
\end{defn}
\begin{thm}\label{thm2.1}
 Let  $u\in C(B_R,(0,\infty))$ be a separable function. If $u$ is not radially symmetric with respect to the origin $O$, then there exists  $M\in O(N)$ such that the following statements are true:
 \begin {itemize}
\item [{\rm (i)}]
(Axial symmetry). For any  $\alpha\in(0,R]$  and $h\in [-\alpha,\alpha],$   ${u_{M}}|_{\{x=(x_1,x_2,\cdots,x_N)\in S_\alpha^{N-1}| x_N=h\}}$ is  constant, that is,  $u_M$  is axially symmetric with respect to  $x_N$-axis;
\item [{\rm (ii)}](Monotonicity). For any given  $\alpha\in(0,R],$
$u_{M}(0_{N-2},\alpha\cos\theta,\alpha\sin\theta)$ is decreasing with respect to
$\theta\in[\frac{\pi}{2},\frac{3\pi}{2}]$.
 \end {itemize}
\end{thm}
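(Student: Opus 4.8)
\textbf{Proof proposal for Theorem \ref{thm2.1}.}

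The plan is to exploit the foliation of $B_R$ by the homocentric spheres $S^{N-1}_\alpha$ with $\alpha\in(0,R]$, apply the sphere result Lemma \ref{lem2.4} to the restriction of $u$ to each such sphere, and then patch the orthogonal transformations obtained on different spheres into a single one. First I would observe that the separability of $u$ in $B_R$ passes to each $S^{N-1}_\alpha$: if $H$ is an open half-space with $O\in\partial H$, then $\sigma_H$ maps $S^{N-1}_\alpha$ to itself, so \eqref{11} restricted to $H\cap S^{N-1}_\alpha$ says exactly that $u|_{S^{N-1}_\alpha}$ is separable in the sense of Definition \ref{defn1} (after the obvious rescaling $x\mapsto x/\alpha$ identifying $S^{N-1}_\alpha$ with $S^{N-1}$). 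Since $u$ is not radially symmetric with respect to $O$, there exists at least one $\alpha_*\in(0,R]$ on which $u|_{S^{N-1}_{\alpha_*}}$ is nonconstant; apply Lemma \ref{lem2.4} to get $M_{\alpha_*}\in O(N)$, max-cap center $a^*(\alpha_*)$ and min-cap center $b^*(\alpha_*)$ lying on a common diameter through $O$.

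Next I would show that the axis is the same for every sphere on which $u$ is nonconstant. Suppose $\alpha_1,\alpha_2\in(0,R]$ are two such radii with distinct axes, i.e. the diameter line $L_1$ determined by the caps on $S^{N-1}_{\alpha_1}$ differs from $L_2$ on $S^{N-1}_{\alpha_2}$. Then there is an open half-space $H$ with $O\in\partial H$ such that the max-cap center $a^*(\alpha_1)$ lies in $H$ while $a^*(\alpha_2)$ lies in $\mathbb{R}^N\setminus cl(H)$ (or one can use min-cap centers; the point is that $L_1\ne L_2$ lets one separate the two relevant cap centers by a hyperplane through $O$). By Corollary \ref{cor2.3} applied on $S^{N-1}_{\alpha_1}$ we get $u(x)\ge u(\sigma_H x)$ for all $x\in H\cap S^{N-1}_{\alpha_1}$, while on $S^{N-1}_{\alpha_2}$ we get $u(x)\le u(\sigma_H x)$ for all $x\in H\cap S^{N-1}_{\alpha_2}$; but these two inequalities for the single half-space $H$ must each hold for \emph{all} $x\in H\cap B_R$ by \eqref{11}, so \eqref{11} forces one consistent sign across the whole ball, contradicting the strict variation on at least one of the two spheres. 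Hence all these axes coincide; choose $M\in O(N)$ rotating this common axis to the $x_N$-axis. On spheres where $u$ is constant there is nothing to prove for (i) and the monotonicity statement (ii) is trivial there.

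With a single $M$ fixed, part (i) is immediate: for each $\alpha\in(0,R]$, Lemma \ref{lem2.4}(iii) applied to $u_M|_{S^{N-1}_\alpha}$ (rescaled) says $u_M$ is constant on each latitude circle $\{x\in S^{N-1}_\alpha : x_N=h\}$, which is the asserted axial symmetry; and part (ii) follows from Lemma \ref{lem2.4}(iv), which gives that $u_M(0_{N-2},\alpha\cos\theta,\alpha\sin\theta)$ is nonincreasing in $\theta\in[\tfrac{\pi}{2},\tfrac{3\pi}{2}]$ for each fixed $\alpha$. One small point to handle carefully is the sign/orientation consistency: Lemma \ref{lem2.4} only pins down the axis as an unoriented line, so a priori the max-cap could sit near $x_N=+1$ on one sphere and near $x_N=-1$ on another; the same Corollary \ref{cor2.3}-plus-\eqref{11} argument as above rules this out, since otherwise a half-space through $O$ whose boundary is, say, the hyperplane $\{x_N=0\}$ would have to receive opposite-signed comparison inequalities on the two spheres. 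The main obstacle I anticipate is precisely this patching step — making the separation-of-cap-centers-by-a-hyperplane-through-$O$ argument airtight (both for the axis direction and for the orientation), including the degenerate situations where a cap center lies on $\partial H$ or where $u$ is constant on a dense set of spheres but not all — rather than the final invocation of Lemma \ref{lem2.4}, which is routine once $M$ is in hand.
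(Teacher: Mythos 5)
Your proposal is correct and follows essentially the same route as the paper: separability in $B_R$ restricts to each homocentric sphere, Lemma \ref{lem2.4} gives the cap structure there, and a Corollary \ref{cor2.3}-plus-ball-separability separation argument forces all max-cap centers onto one common oriented ray from $O$, after which a single $M$ yields (i) and (ii). The patching details you flag (orientation, spheres where $u$ is constant, choosing one $M$ for all radii) are exactly what the paper's proof handles by re-selecting centers on the common ray and checking that $M_\beta M_{\alpha_0}^{-1}$ preserves the horizontal hyperplanes.
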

\begin{proof}
Fix $\alpha\in (0,R]$. By applying Lemma~\ref{lem2.4}
to $u(\alpha \cdot)|_{S^{N-1}}$, we know that  $u^{-1}(\max\limits_{S_\alpha ^{N-1}} u)\bigcap S_\alpha^{N-1}$  are a single set or a spherical cap of  $S_\alpha^{N-1}$. Let us  write $x_*^{\alpha}$ for the centers of $u^{-1}(\max\limits_{S_\alpha^{N-1}} u)\bigcap S_\alpha^{N-1}$.

We claim  that there exists a radial with the peak at $O$ passing through $x_*^{\alpha}$ and $x_*^{\beta}$ for any $\alpha,\beta\in(0,R]$. Indeed, if either $u|_{S_\alpha^{N-1}}$ or  $u|_{S_\beta^{N-1}}$ is  a constant function, then
$$u^{-1}(\max_{S_\alpha^{N-1}} u)\bigcap S_\alpha^{N-1}=S_\alpha^{N-1}
\mbox{ or }  u^{-1}(\max_{S_\beta^{N-1}} u)\bigcap S_\beta^{N-1}=S_\beta^{N-1}.$$
So we can re-select $x_*^\alpha$ ( or $x_*^\beta$) belonging to $Ox_*^\beta \cap S_\alpha^{N-1} $ (or $Ox_*^\alpha \cap S_\beta^{N-1}$).

If neither $u|_{S_\alpha^{N-1}}$ nor $u|_{S_\beta^{N-1}}$ is a  constant  function, then  $x_*^{\alpha}$, $ x_*^{\beta}$
 are unique centers of $u^{-1}(\max\limits_{ S_\alpha^{N-1}} u)\bigcap S_\alpha^{N-1}$, $u^{-1}(\max\limits_{S_\beta^{N-1}} u)\bigcap S_\beta^{N-1}$, respectively.
Suppose that  $x_*^{\alpha}$, $ x_*^{\beta}$ are not
in same radial with the peak at $O$.  Then there exists  an open half space $H$ in $\mathbb{R}^N$ such that the origin $O\in \partial H$ , $x_*^{\alpha}\in H$, and  $ x_*^{\beta}\notin cl(H)$. It follows from Corollary  \ref{cor2.3} that $u(x)\geq u(\sigma_Hx)$ for any $x\in H\bigcap S_\alpha^{N-1}$ and $u(x)\leq u(\sigma_Hx)$ for any $x\in H\bigcap S_\beta^{N-1}$. These, together with  the separability of $u$, implies  either $u(x)=u(\sigma_Hx)$ for any $x\in H\bigcap S_\alpha^{N-1}$ or $u(x)= u(\sigma_Hx)$ for any $x\in H\bigcap S_\beta^{N-1}$. Thus,  either $u|_{ S_\alpha^{N-1}}$ or  $u|_{ S_\beta^{N-1}}$ is  a constant function, a contradiction. So the claim is true.

By applying Lemma~\ref{lem2.4} to $u(\alpha \cdot)|_{ S^{N-1}},$ there exists  $M_\alpha\in O(N)$ such that ${u_{M_\alpha}}|_{\{x=(x_1,x_2,\cdots,x_N)\in S_\alpha^{N-1}| x_N=h\}}$ is a constant and $u_{M_\alpha}(0_{N-2},\alpha\cos\theta,\alpha\sin\theta)$ is decreasing with respect to
$\theta\in[\frac{\pi}{2},\frac{3\pi}{2}]$.

 We may assume $u|_{ S_{\alpha_0}^{N-1}}$ is not constant for some $\alpha_0\in(0,R)$ since $u$ is not radially symmetric with respect to the origin $O$.

 In the following,  we shall prove $M_\beta M_{\alpha_0}^{-1} (\mathbb{R}^{N-1}\times \{\eta\})\subseteq \mathbb{R}^{N-1}\times \{\eta\}$ for all $\beta\in (0,R]$ and $\eta \in \mathbb{R}$.

 If $u|_{ S_\beta^{N-1}}$ is constant, then we may re-select $M_\beta={M_{\alpha_0}}$ such that ${u_{{M_{\alpha_0}}}}|_{\{x=(x_1,x_2,\cdots,x_N)\in S_\beta ^{N-1}| x_N=h\}}$ is a constant and $u_{{M_{\alpha_0}}}(0_{N-2},\beta\cos\theta,\beta\sin\theta)$ is decreasing with respect to $\theta\in[\frac{\pi}{2},\frac{3\pi}{2}]$.  So, $M_\beta M_{\alpha_0}^{-1} (\mathbb{R}^{N-1}\times \{\eta\})= \mathbb{R}^{N-1}\times \{\eta\}$ for all $\eta \in \mathbb{R}$.

 Now suppose that $u|_{ S_\beta^{N-1}}$ is not constant. Then $x_*^{\alpha_0}=\alpha_0 M_{\alpha_0}^{-1}(0_{N-1},1)$ and  $x_*^{\beta}=\beta M_{\beta}^{-1}(0_{N-1},1)$. This, combined with the fact $x_*^{\alpha}$, $ x_*^{\beta}$ are
in same radial with the peak at $O$, gives  $M_{\alpha_0}^{-1}(0_{N-1},1)=M_{\beta}^{-1}(0_{N-1},1)$, that is, $M_\beta M_{\alpha_0}^{-1}  (0_{N-1},1)=(0_{N-1},1)$. As a result, $M_\beta M_{\alpha_0} ^{-1}(\mathbb{R}^{N-1}\times \{\eta\})\subseteq \mathbb{R}^{N-1}\times \{\eta\}$ for all $\eta \in \mathbb{R}$.

In view of the choices of $M_{\alpha_0}, M_\beta$ and the fact that $M_\beta M_{\alpha_0}  ^{-1} (\mathbb{R}^{N-1}\times \{\eta\})\subseteq \mathbb{R}^{N-1}\times \{\eta\}$ for all $\eta \in \mathbb{R}$, we easily see $u_{M_{\beta}}|_{ S_\beta^{N-1}}=u_{M_{\alpha_0}}|_{S_\beta^{N-1}}$. To sum up, we may re-select $M:=M_\beta={M_{\alpha_0}}$ for all $\beta\in (0,R]$ with statements (i) and (ii).

The proof is completed.
\end{proof}

It is easy to obtain the following results that the axially symmetric and monotone functions in balls are   separable.
\begin{thm}\label{thm2.2}
Let  $u\in C(B_R,(0,\infty))$ be an axially symmetric function. Suppose that there exists $M\in O(N)$ such that for any given  $\alpha\in(0,R],$
$u_{M}(0_{N-2},\alpha\cos\theta,\alpha\sin\theta)$ is decreasing with respect to
$\theta\in[\frac{\pi}{2},\frac{3\pi}{2}]$. Then $u$ is separable in $B_R.$
\end{thm}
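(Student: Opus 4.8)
\textbf{Proof proposal for Theorem~\ref{thm2.2}.}
The plan is to verify the separability condition \eqref{11} directly. Fix an arbitrary open half-space $H\subset\mathbb{R}^N$ with $O\in\partial H$, and let $x\in H\cap B_R$ with $\alpha:=\|x\|$. If $\alpha=0$ there is nothing to check since $\sigma_H O=O$, so assume $\alpha\in(0,R]$. The point $x$ lies on the sphere $S_\alpha^{N-1}$, and since $O\in\partial H$ the reflection $\sigma_H$ maps $S_\alpha^{N-1}$ onto itself and maps $H\cap S_\alpha^{N-1}$ onto $(\mathbb{R}^N\setminus cl(H))\cap S_\alpha^{N-1}$; in particular $\sigma_Hx\in B_R$. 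Thus it suffices to show that on each sphere $S_\alpha^{N-1}$ one of the two inequalities in \eqref{11} holds uniformly, and then to check that the \emph{same} choice (``$\geq$'' or ``$\leq$'') is forced on all the spheres simultaneously by the common axis $M^{-1}(0_{N-1},1)$.

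First I would reduce to the sphere. Apply the orthogonal change of variables $x\mapsto M^{-1}x$: replacing $u$ by $u_M$ and $H$ by the half-space $MH$ (whose boundary still contains $O$), we may assume $M=\mathrm{Id}$, so $u$ is axially symmetric about the $x_N$-axis, $u(0_{N-2},\alpha\cos\theta,\alpha\sin\theta)$ is nonincreasing in $\theta\in[\tfrac\pi2,\tfrac{3\pi}2]$ for each $\alpha$, and I must show \eqref{11} for the transformed $H$. Because $u$ is axially symmetric about the $x_N$-axis, the restriction $u|_{S_\alpha^{N-1}}$ depends only on the coordinate $x_N$, i.e. $u(y)=\varphi_\alpha(y_N)$ for some continuous function $\varphi_\alpha$ on $[-\alpha,\alpha]$; the monotonicity hypothesis says precisely that $\varphi_\alpha$ is nondecreasing on $[-\alpha,\alpha]$ (as $\theta$ runs over $[\tfrac\pi2,\tfrac{3\pi}2]$, $\alpha\sin\theta$ runs from $\alpha$ down to $-\alpha$, so $u$ decreasing in $\theta$ means $u$ increasing in $y_N$). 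Now write $e:=(0_{N-1},1)$, the unit vector along the axis. Reflection in $\partial H$ acts on the $N$-th coordinate by $(\sigma_Hy)_N = y_N - 2\langle y-p,\nu\rangle\,\nu_N$ where $\nu$ is the unit normal to $\partial H$ pointing into $H$ and $p\in\partial H$; since $O\in\partial H$ we may take $p=O$, giving $(\sigma_H y)_N = y_N - 2\langle y,\nu\rangle\,\nu_N$.

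The decisive computation is the sign of $y_N-(\sigma_Hy)_N = 2\langle y,\nu\rangle\,\nu_N$ for $y\in H\cap S_\alpha^{N-1}$. For such $y$ we have $\langle y,\nu\rangle>0$ by definition of $H$ (recall $O\in\partial H$, so $H=\{\langle\cdot,\nu\rangle>0\}$). Hence the sign of $y_N-(\sigma_Hy)_N$ equals the sign of $\nu_N=\langle\nu,e\rangle$, which is the \emph{same} for every $y$ and every radius $\alpha$. Case (a): $\nu_N>0$, equivalently $e\in H$. Then $y_N>(\sigma_Hy)_N$, and since $\varphi_\alpha$ is nondecreasing, $u(y)=\varphi_\alpha(y_N)\geq\varphi_\alpha((\sigma_Hy)_N)=u(\sigma_Hy)$ for all $y\in H\cap S_\alpha^{N-1}$; letting $\alpha$ vary over $(0,R]$ this gives the ``$\geq$'' alternative of \eqref{11} on all of $B_R$. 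Case (b): $\nu_N<0$, i.e. $e\notin cl(H)$, gives the ``$\leq$'' alternative by the same argument. Case (c): $\nu_N=0$, i.e. $e\in\partial H$; then $\sigma_H$ fixes the $x_N$-coordinate, so $u(y)=\varphi_\alpha(y_N)=\varphi_\alpha((\sigma_Hy)_N)=u(\sigma_Hy)$ and both alternatives hold trivially. In every case one of the two inequalities in \eqref{11} holds for all $x\in H\cap B_R$, so $u$ is separable.

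The only mildly delicate point, and the one I would write out carefully, is the bookkeeping showing that the choice between ``$\geq$'' and ``$\leq$'' is dictated globally by the single quantity $\mathrm{sign}\,\langle\nu,e\rangle$ and does not depend on $\alpha$ or on the individual point $x$ — this is what makes \eqref{11} hold with one fixed alternative rather than possibly different alternatives on different spheres. Everything else is the routine identification of an axially symmetric function on a sphere with a one-variable profile and the elementary formula for reflection across a hyperplane through the origin; no convex analysis or the earlier sphere lemmas are needed here, since Theorem~\ref{thm2.2} is the easy converse direction.
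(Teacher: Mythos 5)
Your proof is correct. The paper states Theorem~\ref{thm2.2} without proof (it is described as easy to obtain), and your direct verification --- reducing to $M=\mathrm{Id}$ via the orthogonal change of variables, identifying the axially symmetric restriction $u|_{S_\alpha^{N-1}}$ with a nondecreasing one-variable profile $\varphi_\alpha(y_N)$, and noting that the sign of $y_N-(\sigma_H y)_N=2\langle y,\nu\rangle\,\nu_N$ is governed by $\mathrm{sign}\,\nu_N$ uniformly in $y$ and in $\alpha$, so that a single alternative in \eqref{11} holds on all of $B_R$ --- is exactly the intended elementary argument for this converse direction.
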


By Theorem~\ref{thm2.2}, the separability is  equivalent to  the axial symmetry and monotonicity for a given function in balls. This observation  enables us  to give following examples that   separable functions  in balls  may be only axially symmetric  but not radially symmetric.

\begin{example}\label{rem2.1}

Let $R>0$, let $g\in C([-R,R],(0,\infty))$ be  a nonconstant and nonincreasing function, and let $h\in C([0,R],[0,\infty))$ with
$h(R)=0$ and $h([0,R))\subset (0,\infty).$  Define $u:B_R\to \mathbb{R}$ by
$$u(x_1,x_2,\cdots,x_N)=g(x_N)h(\sqrt{{x_1}^2+{x_2}^2+\cdots+{x_N}^2}).$$
It is easy to check that $u$ satisfies the separable property. However,
 $u$ is only axially symmetric
 with respect to the $x_N$-axis and  is not radially symmetric with respect to the origin $O$.
\end{example}

\section{Separable functions in whole space}

\par

In this section, based on the results obtained in Section 2, we shall show that a separable function in  $\mathbb{R}^N$ can imply its radial symmetry and monotonicity.

First we give the definition of separable functions in  $\mathbb{R}^N$.
\begin{defn}\label{defn3.1}
 A function $u\in C(\mathbb{R}^N,\mathbb{R})$ is called
separable if for any open half space $H\subset \mathbb{R}^N$, there holds
\begin{equation}\label{700}
\mbox{ either $u(x)\geq u(\sigma_Hx)$ for all $x\in H$ or $u(x)\leq u(\sigma_Hx)$ for all $x\in H$}.
\end{equation}
\end{defn}

Let $u\in C(\mathbb{R}^N,\mathbb{R})$. A line $L\subset \mathbb{R}^N$ is  a symmetry axis of $u$ if and only if for any given $\alpha>0$, $z\in L\cap V$,  and $N-1$ dimensional hyperplane $V$ with $L\bot V$,   $u|_{S_\alpha^{N-1}(z)\cap V}$ is constant.

In the following lemma, we give some properties of separable functions in  $\mathbb{R}^N$.

\begin{lemma}\label{lem3.1}
Let $u\in C(\mathbb{R}^N,(0,\infty))$ be a separable function and let $\mathcal{L}$ be the set of all  the symmetry axes of $u$. Assume that  $u$ is not radially symmetric in $\mathbb{R}^N$. Then  the following statements are true.
\begin {itemize}
\item[\rm{(i)}] For any $x\in\mathbb{R}^N$, there exists an unique  $L_x:=L(x)\in \mathcal{L}$ such that  $x\in L_x$, and hence $L_x=L_y$ whence $y\in L_x$.
\item[\rm{(ii)}] For any $x,y\in \mathbb{R}^N$, there holds
either $L_x=L_y$ or $L_x//L_{y}$ (that is,
$L_x$ is parallel to $L_{y}$).
\end{itemize}
\end{lemma}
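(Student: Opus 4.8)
The plan is to exploit the fact that a separable function in $\mathbb{R}^N$ restricts to a separable function in every closed ball $B_\alpha(x_0)$, so that Theorem~\ref{thm2.1} applies on each such ball. First I would record this restriction principle: if $u$ is separable in $\mathbb{R}^N$, then for any $x_0\in\mathbb{R}^N$ and $\alpha>0$ the translated function $v(y):=u(x_0+y)$ is separable in $B_\alpha$ in the sense of the ball-definition, since every open half-space $H$ with $\partial H\ni O$ gives, after translating by $x_0$, an open half-space whose boundary hyperplane passes through $x_0$, and the defining inequality \eqref{700} for $u$ restricted to $H\cap B_\alpha(x_0)$ is exactly \eqref{11} for $v$. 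Because $u$ is not radially symmetric about some point $x_0$ (such a point exists, else $u$ would be radially symmetric, excluded by hypothesis), $v$ is not radially symmetric about $O$, so Theorem~\ref{thm2.1} yields $M\in O(N)$ such that $v_M$ is axially symmetric about the $x_N$-axis with the stated monotonicity. Translating back, $L_{x_0}:=x_0+M^{-1}(\{0_{N-1}\}\times\mathbb{R})$ is a symmetry axis of $u$ in the sense defined just before the lemma, and $x_0\in L_{x_0}$.

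Next I would upgrade this from ``some point'' to ``every point''. Given an arbitrary $x\in\mathbb{R}^N$, I would take $\alpha$ large enough that $x_0$ lies in the interior of $B_\alpha(x)$ and that $u|_{S_\beta^{N-1}(x_0)}$ is nonconstant for some $\beta$ with $B_\beta(x_0)\subset B_\alpha(x)$ (possible since $u$ is nonconstant on spheres around $x_0$). Applying Theorem~\ref{thm2.1} to the restriction of $u$ to $B_\alpha(x)$ produces an axis $L_x$ through $x$; applying it to $B_{\alpha'}(x_0)$ for a slightly smaller concentric-enough ball reproduces $L_{x_0}$. Since both balls contain the sphere $S_\beta^{N-1}(x_0)$ on which $u$ is nonconstant, the center $x_*^\beta$ of the max-cap on that sphere is uniquely determined and must lie on both axes; likewise a second such nonconstant sphere (any $S_{\beta'}^{N-1}(x_0)$ close to $S_\beta^{N-1}(x_0)$ is also nonconstant by continuity, with a possibly different max-cap center) pins down a second common point, forcing $L_x=L_{x_0}$ as lines --- hence every point lies on the single axis $L_{x_0}$ and all these axes coincide, giving (i) and hence also the ``$L_x=L_y$ whenever $y\in L_x$'' clause, and in fact (ii) with $L_x=L_y$ always.

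Wait --- that last deduction is too strong and is presumably false in general; the correct reading is that (ii) genuinely allows the parallel alternative, so I should not try to prove all axes coincide. Instead, for (i), the uniqueness of $L_x$ must come purely from the local analysis at $x$: Theorem~\ref{thm2.1} gives \emph{an} axis through $x$, and uniqueness follows because if $L$ and $L'$ were two distinct symmetry axes through $x$, one could find a ball $B_\alpha(x)$ on which $u$ is nonconstant (using that $u$ is not radially symmetric, combined with the observation that radial symmetry about \emph{every} point would force $u$ constant --- or more carefully, nonradiality about one point propagates), and then Theorem~\ref{thm2.1}(i) forces the max-cap centers on the nonconstant spheres $S_\beta^{N-1}(x)$ to be unique, so they determine a unique direction, contradicting two distinct axes. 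The clause $L_x=L_y$ for $y\in L_x$ is then immediate: $L_x$ is a symmetry axis containing $y$, so by uniqueness $L_y=L_x$. For (ii), take $x,y$ with $L_x\neq L_y$; I would intersect a large ball $B_R$ (centered anywhere, say at the midpoint) containing long segments of both axes, apply Theorem~\ref{thm2.1} to get a single $M$ making $u_M|_{B_R}$ axially symmetric about the $x_N$-axis, conclude that the symmetry axes of $u$ meeting the interior of $B_R$ must all be parallel to $M^{-1}(\{0_{N-1}\}\times\mathbb{R})$ (a symmetry axis of $u$ restricted to $B_R$ is a symmetry axis in the ball sense, and Theorem~\ref{thm2.1} pins the direction), so $L_x//L_y$; since $R$ is arbitrary this covers all pairs.

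The main obstacle I anticipate is the bookkeeping around nonconstancy: I must be sure that ``$u$ not radially symmetric in $\mathbb{R}^N$'' gives me, at each point $x$ I care about, arbitrarily large balls $B_\alpha(x)$ on which $u$ is not radially symmetric about $x$ --- equivalently, spheres $S_\beta^{N-1}(x)$ on which $u$ is nonconstant --- so that Theorem~\ref{thm2.1} is applicable and its max-cap centers are genuinely unique. This needs the subsidiary fact that if $u$ were radially symmetric about two distinct points it would be constant, and a limiting/continuity argument to transfer nonconstancy of spheres from the ``bad'' point $x_0$ to a neighborhood and then, via chains of overlapping balls, to all of $\mathbb{R}^N$. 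Once that propagation lemma is in hand, (i) and (ii) are routine consequences of Theorem~\ref{thm2.1} and Corollary~\ref{cor2.3}.
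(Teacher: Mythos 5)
Your treatment of part (i) is essentially sound and close to the paper's: restrict $u$ to spheres and balls centred at $x$, use the hypothesis to get a nonconstant sphere $S_{\alpha_0}^{N-1}(x)$, apply Lemma~\ref{lem2.4} and Theorem~\ref{thm2.1}, and note that the unique centre of the max-cap on a nonconstant sphere about $x$ pins down the axis, which gives both existence of $L_x$ and its uniqueness (and hence $L_x=L_y$ for $y\in L_x$). One remark: the ``propagation lemma'' you keep hedging about is both unnecessary and, as you state it, false --- a function such as $f(|y-a|)$ with $f$ decreasing is separable and non-radial about every point except $a$, yet radial about $a$, so non-radiality about one point does not propagate. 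What you actually need is only the correct reading of the hypothesis: ``not radially symmetric in $\mathbb{R}^N$'' means not radially symmetric with respect to \emph{any} point (this is exactly how it is negated in Theorem~\ref{thm3.1}), and that directly hands you a nonconstant sphere about every $x$, with no continuity or chain-of-balls argument.

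The genuine gap is in part (ii). Your plan is to take a large ball $B_R$ meeting both axes, apply Theorem~\ref{thm2.1} to $u|_{B_R}$, and ``conclude that the symmetry axes of $u$ meeting the interior of $B_R$ must all be parallel'' to the axis so produced because ``Theorem~\ref{thm2.1} pins the direction''. Theorem~\ref{thm2.1} does no such thing: it produces \emph{one} axis, passing through the centre of the ball, and says nothing about symmetry axes of $u$ that meet the ball away from its centre; the max-cap uniqueness argument only controls axes through the centre of the spheres being used. The only configuration your tools dispose of is $L_x\cap L_y\neq\emptyset$ (two global axes through the intersection point contradict (i)); for $N\geq 3$ the remaining possibility is that $L_x$ and $L_y$ are skew, and that is precisely where the real work of the lemma lies. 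The paper kills the skew case by a dedicated construction: choose nonconstant spheres $S_{R_1}^{N-1}(x)$ and $S_{R_2}^{N-1}(y)$, take maximum points $x^*\in L_x\cap S_{R_1}^{N-1}(x)$ and $y^*\in L_y\cap S_{R_2}^{N-1}(y)$, and build a half-space $H$ whose boundary hyperplane contains $x$, $y$ and the midpoint $\frac{x^*+y^*}{2}$, so that $\sigma_H$ maps each of the two spheres to itself while $x^*$ and $y^*$ lie strictly on opposite sides of $\partial H$; this produces points $\tilde x^*\in H$ and $\sigma_H\tilde y^*\in H$ at which $u-u\circ\sigma_H$ takes opposite strict signs, contradicting the separability of $u$ in $\mathbb{R}^N$. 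Nothing in your proposal plays the role of this construction (or of any substitute for it), so as written the skew case, and with it statement (ii), is unproven.
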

\begin{proof}
(i). Fix $x\in\mathbb{R}^2$. Since $u$ is not a radially symmetric function, there exists $\alpha_0>0$ such that $\max\limits_{S_{\alpha_0}^{N-1}(x)}u>\min\limits_{S_{\alpha_0}^{N-1}(x)}u$. By applying Lemma~\ref{lem2.4} to $u(x+\alpha_0 \cdot)|_{S^{N-1}}$, we know  $u|_{S_{\alpha_0}^{N-1}(x)}$  is only an axially symmetric function, where $u(x+\alpha_0 \cdot)|_{S^{N-1}}:S^{N-1}\ni z\mapsto u(x+\alpha_0 z)\in (0,\infty)$.  Let   $L_x:=L(x)$ be the line containing the symmetry axis  of $u|_{S_{\alpha_0}^{N-1}(x)}$.

Now we prove $L_x\in \mathcal{L}$.  Fix $\alpha>0$, $z\in L_x\cap V$,  and a $N-1$ dimensional hyperplane $V$ with $L_x\bot V$.
By applying Theorem~\ref{thm2.1} to $u|_{B_{\max \{ \alpha_0,\alpha+\|x-z\|\}}(x)}$, we obtain that  $L_x$ is a unique symmetry axis of $u|_{B_{\max \{ \alpha_0,\alpha+\|x-z\|\}}(x)}$ and $u|_{ S_{\alpha}^{N-1}(z)\cap V}$ is constant, which implies
$L_x\in \mathcal{L}$.

By the uniqueness of symmetry axis through one point, we easily see  $L_x=L_y$  for any  $y\in L_x$. This completes of the proof of (i).

(ii)  Fix $x,y\in \mathbb{R}^N$. By (i), we only consider the case of  $y\notin L_x$.
We  prove it  by contradiction. Suppose on the contrary that
 $L_{x}$ is not parallel to $L_y$.

 We shall finish the proof by distinguishing two cases.	\\
\noindent\emph{Case 1}. $L_x\cap L_y\neq \emptyset $, that is,  $L_x $ and $ L_y$ are coplanar.

Take $x^*\in L_x\cap L_y$. Then $L_x, L_y$ are two different symmetry axes of $u$ through $x^*$, a contradiction with (i).

\noindent\emph{Case 2}. $L_x\cap L_y= \emptyset $, that is $L_x$ and $L_y$ are not coplanar.

Since $u$ is not radially symmetric, it follows from Lemma~\ref{lem2.4} that there exist
  positive constants $R_1$ and $R_2$ such that  both  $u|_{ S_{R_1}^{N-1} (x)}$ and $u|_{S_{R_2}^{N-1}(y)}$ are nonconstant and axially symmetric function with respect to the   $L_x$ and  $L_{y}$, respectively.

  Take   $x^*\in L_x\cap S_{R_1}^{N-1} (x),y^*\in L_y\cap S_{R_2 }^{N-1} (y)$ with
  $u(x^*)= \max\limits_{S_{R_1}^{N-1}(x)} u$ and
  $u(y^*)=\max\limits_{S_{R_2}^{N-1} (y)} u$.
  Then $\mathbf {dim}  (
 \mathbf{aff}(\{x,y,\frac{x^*+y^*}{2}\})) =2$, $\mathbf {dim}  (
 \mathbf{aff}(\{x,y,x^*,y^*\})) =3$, and thus there exists a hyperplane $\hat{H}\subseteq \mathbb{R}^N$ such that $\mathbf{aff}(\{x,y,x^*,y^*\})\times \hat{H}\subset \mathbb{R}^N$ and $\mathbf {dim} (\hat{H})=N-3$.

 Let $H$ be open half space $H\subset \mathbb{R}^N$ with $\partial H= \mathbf{aff}(\{x,y,\frac{x^*+y^*}{2}\})\times \hat{H}$.  Then $x,y\in \partial H$,  $x^*,y^*\notin \partial H$ and $\{x^*,y^*\}\setminus H \neq \emptyset$. Without loss of generality, we may assume that  $x^*\in H$
 and $y^*\notin cl(H)$.
  By the choices of $x^*$ and $y^*$,
 there exist $\tilde{x}^*\in H\bigcap S_{R_1}^{N-1} (x)$ and $\tilde{y}^*\in S_{R_2}^{N-1}(y)\setminus  cl(H)$ such that $u(\tilde{x}^*)>u(\sigma_H\tilde{x}^*)$ and $u(\tilde{y}^*)>u(\sigma_H\tilde{y}^*)$. Note that $\tilde{x}^*, \tilde{y}^*\in B_{\|x-y\|+2(R_1+R_2)}(y)$ and  $u|_{B_{\|x-y\|+2(R_1+R_2)}(y)}$ is nonconstant.
Hence by the   separability of $u$, we deduce a
 a contradiction.

To sum up, the proof is completed.
\end{proof}

In what follows, we describe monotonicity of even separable functions in $\mathbb{R}$, which is important to obtain the monotonicity of radial separable functions in $\mathbb{R}^N.$
\begin{lemma}\label{lem3.2}
Let $u\in C(\mathbb{R},(0,\infty))$, $u(x)=u(-x)$ and $\liminf\limits_{|x|\to\infty}u(x)=0.$
Suppose that for any  $x\in \mathbb{R},$
\begin{equation}\label{8}
\mbox{either $u(y)\geq u(2x-y)$ for all $y\geq x$ or
$u(y)\leq u(2x-y)$ for all $y\geq x.$ }
\end{equation}
Then $u$ is nonincreasing  on $[0,\infty)$.
\end{lemma}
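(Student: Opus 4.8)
\textbf{Proof proposal for Lemma~\ref{lem3.2}.}

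The plan is to argue by contradiction: suppose $u$ is not nonincreasing on $[0,\infty)$, so there exist $0\le a<b$ with $u(a)<u(b)$. First I would record the basic dichotomy coming from \eqref{8}: for each reflection center $x$, either $u$ keeps the larger value on $[x,\infty)$ (call $x$ a ``right-larger'' center) or on $(-\infty,x]$ (a ``left-larger'' center); because $u$ is even, reflecting in $0$ shows that if $x$ is right-larger then $-x$ is left-larger, and vice versa. The key local consequence I want is: if $0\le x$ is a right-larger center, then $u$ restricted to $[x,\infty)$ is nonincreasing, since for $x\le y_1<y_2$ we may take $y=y_2$ in the ``$u(y)\le u(2y_1-y)$'' alternative — wait, more carefully, the alternative at center $y_1$ compares $u(y)$ with $u(2y_1-y)$ for $y\ge y_1$; I would instead use the reflection center $\tfrac{y_1+y_2}{2}$ and the point $y=y_2$, obtaining either $u(y_2)\ge u(y_1)$ or $u(y_2)\le u(y_1)$, and then pin down which alternative holds using information propagated from the center $x$.

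The cleanest route is probably this. From $u(a)<u(b)$ with $a<b$, consider the midpoint $c=\tfrac{a+b}{2}$; the dichotomy \eqref{8} at $c$ forces the ``$u(y)\ge u(2c-y)$ for all $y\ge c$'' alternative (taking $y=b$ gives $u(b)\ge u(a)$, consistent; the other alternative would give $u(b)\le u(a)$, contradiction). Hence $u(y)\ge u(2c-y)$ for every $y\ge c$. Now I would push $y\to\infty$: as $y\to\infty$, $2c-y\to-\infty$, and by evenness $u(2c-y)=u(y-2c)\to$ along a subsequence $\liminf=0$ is not quite what I want — I need an actual limit. Instead, the point is that $u(y)\ge u(2c-y)=u(y-2c)$ for all $y\ge c$ says $u$ on a half-line dominates a shifted copy of itself; iterating, $u(y)\ge u(y-2kc)$ for all $k$ with $y-2kc\ge c$ — wait this needs $c>0$, i.e. $a+b>0$, which holds unless $a=b=0$; since $a<b$ we have $b>0$ so $c>0$. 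Iteration then gives $u(y)\ge u(t)$ for a sequence of $t\to+\infty$, and combined with $\liminf_{|x|\to\infty}u=0$ this does not immediately contradict anything, so I will need the stronger structural fact that $u$ is \emph{eventually} monotone.

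The main obstacle, and where I would concentrate effort, is converting the pointwise reflection inequalities into genuine monotonicity on a half-line so that $\liminf_{|x|\to\infty}u=0$ can be used. I expect the right statement is: the set $P=\{x\ge 0: x \text{ is a right-larger center}\}$ is an interval of the form $[x_0,\infty)$ or is empty, and on $P$ the function $u$ is nonincreasing; symmetrically the left-larger centers in $[0,\infty)$ form $[0,x_1]$ and $u$ is nondecreasing on $[0,x_1]$ when restricted suitably. One shows $P\ne\emptyset$ because if every $x\ge 0$ were left-larger then $u(y)\le u(2x-y)$ for all $y\ge x$, i.e. $u(x+t)\le u(x-t)$, which propagated from $x=0$ gives $u$ nondecreasing on $[0,\infty)$, contradicting $\liminf=0$ together with positivity — actually that gives $u$ nonincreasing on $[0,\infty)$, which is exactly the desired conclusion, so that case is fine. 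Thus we may assume some $x\ge 0$ is right-larger; then $u$ is nonincreasing on $[x,\infty)$, and I would use this together with evenness and \eqref{8} at centers in $[0,x]$ to show $u$ is in fact nonincreasing on all of $[0,\infty)$: any apparent increase $u(a)<u(b)$ with $a<b\le x$ is ruled out by applying the dichotomy at the midpoint $c=\tfrac{a+b}{2}\le x$ and then using the already-established monotonicity on $[x,\infty)$ plus the reflected inequality to force the ``wrong'' sign at a far-out point where $u$ is small. Carefully chaining these — choosing the reflection center to land a large value of $u$ onto the region near infinity where $u\le \varepsilon<u(b)$ — will produce the contradiction and complete the proof.
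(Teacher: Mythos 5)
Your opening reductions are fine (the midpoint trick, and the observation that if every center in $[0,\infty)$ satisfies the ``$u(y)\leq u(2x-y)$'' alternative then the conclusion is immediate), but the crux of the lemma is precisely the mixed case — some centers strictly right-dominant, some strictly left-dominant — and there your argument has a genuine gap. The step ``some $x\geq 0$ is right-larger; then $u$ is nonincreasing on $[x,\infty)$'' is unjustified and in fact has the monotonicity backwards: a single center $x$ with $u(y)\geq u(2x-y)$ for $y\geq x$ only compares points symmetric about $x$, and when such comparisons are available at \emph{all} centers to the right they force nondecreasing (not nonincreasing) behaviour there. Likewise the structural claim that the right-dominant centers form a terminal interval $[x_0,\infty)$, and the resulting ``eventual monotonicity'', are exactly what would need proof; nothing in the proposal establishes them, and the final ``carefully chaining these \dots will produce the contradiction'' leaves the decisive step unexecuted. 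You yourself note earlier that the iteration $u(y)\geq u(y-2kc)$ does not contradict $\liminf_{|x|\to\infty}u(x)=0$, so the contradiction never actually materializes.

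The mechanism you are missing is the one the paper uses. Partition $[0,\infty)$ into $I$ (centers with a strict ``$>$'' somewhere to the right), $J$ (strict ``$<$'' somewhere), and $K$ (centers of exact symmetry). By continuity $I$ and $J$ are open and disjoint; $0\in K$ by evenness. If $I=\emptyset$ the midpoint trick gives the conclusion; if $J\cup K$ missed nothing were false, i.e.\ if $I\cup K=[0,\infty)$, then $u$ would be nondecreasing on $[0,\infty)$, contradicting positivity plus $\liminf_{|x|\to\infty}u(x)=0$. So in the mixed case both $I$ and $J$ are nonempty open sets, and connectedness of $(0,\infty)$ forces a point $\alpha^*>0$ in $K$: $u(x)=u(2\alpha^*-x)$ for all $x$. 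Combined with evenness this makes $u$ periodic with period $2\alpha^*$, and a positive continuous periodic function cannot have $\liminf_{|x|\to\infty}u(x)=0$. This ``second symmetry center $\Rightarrow$ periodicity $\Rightarrow$ contradiction with decay'' step is the heart of the proof and is absent from your proposal; without it (or a proved substitute for your interval/eventual-monotonicity claims) the argument does not close.
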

\begin{proof}
Let
\begin{displaymath}
\begin{array}{lll}
I=\{\alpha\geq 0| \mbox{ there exists } x_{\alpha}>\alpha\mbox{ such that }u(x_\alpha)>u(2\alpha-x_\alpha)\},\\
J=\{\alpha\geq 0| \mbox{ there exists }x_{\alpha}>\alpha\mbox{ such that }u(x_\alpha)<u(2\alpha-x_\alpha)\},\\
K=\{\alpha\geq 0| u(x)\equiv u(2\alpha-x)\mbox{ for any } x\in\mathbb{R}\}.
\end{array}
\end{displaymath}
Obviously, $0\in K$ and $I\bigcup J\bigcup K=[0,\infty).$ By the continuity of  $u$,  for any $\alpha\in I$,  there exists $\delta_\alpha\in (0,\alpha)$  such that $x_{\alpha}>\beta$ and $u(x_\alpha)>u(2\beta-x_\alpha)$ for all
$\beta\in (\alpha-\delta_\alpha,\alpha+\delta_\alpha)$, that is, $(\alpha-\delta_\alpha,\alpha+\delta_\alpha)\subset I$. Hence, $I$ is an open set. Similarly,
$J$ is also an open set.  It suffices to prove  $J\bigcup K=[0,\infty)$, since  \eqref{8} and $J\bigcup K=[0,\infty)$  imply that $u$ is nonincreasing on $[0,\infty)$. If not, suppose $J\bigcup K \neq [0,\infty)$. Then $I\neq \emptyset$.
Note that $I\bigcup K\neq [0,\infty)$ since $I\cup K=[0,\infty)$ will yield a contradiction to $\liminf\limits_{|x|\to\infty}u(x)=0$.  Then
$I\neq \emptyset$ and $J\neq \emptyset.$ Since $I\bigcap J=\emptyset$ and $I,J$ are open sets, we have
$I\bigcup J\neq(0,\infty)$, and thus $K\backslash\{0\}\neq \emptyset.$ To be precise, there exists $\alpha^*\in (0,\infty)$ such that
 $u(x)=u(2\alpha^*-x)=u(x-2\alpha^*)$ for all $x\in\mathbb{R}.$ Hence $u$ is a periodic function in $\mathbb{R}$, which contradicts with  $\liminf\limits_{|x|\to\infty}u(x)=0.$
This proves the claim and hence the proof is completed.
\end{proof}

Now we are ready to prove radial symmetry and monotonicity of separable functions in  $\mathbb{R}^N$.

\begin{thm}\label{thm3.1}
Let $u\in C(\mathbb{R}^N,(0,\infty))$ be separable and
$\liminf\limits_{|x|\to\infty}u(x)=0$.
Then $u$ is  radially symmetric decreasing  with respect to some point, that is,
there exist $x^*\in \mathbb{R}^N$ and  a decreasing function $v:[0,\infty)\to(0,\infty)$ such that
$$u(x)=v(|x-x^*|)$$
and $\lim\limits_{r\to\infty}v(r)=0.$
\end{thm}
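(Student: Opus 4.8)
\textbf{Proof proposal for Theorem~\ref{thm3.1}.}
The plan is to combine the geometric rigidity of the symmetry axes obtained in Lemma~\ref{lem3.1} with the one-dimensional monotonicity statement of Lemma~\ref{lem3.2}. First I would dispose of the trivial case: if $u$ is radially symmetric about some point, then for that point $x^*$ the function $v(r):=u(x)$ with $|x-x^*|=r$ is well defined and continuous, and separability applied to hyperplanes through $x^*$ (together with $\liminf_{|x|\to\infty}u=0$, which forces $v$ not to be periodic-like or increasing) will give that $v$ is nonincreasing with limit $0$; in fact this case is a special instance of the argument below. So assume from now on that $u$ is not radially symmetric. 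Then Lemma~\ref{lem3.1} applies: through every point $x$ there is a unique symmetry axis $L_x\in\mathcal{L}$, and any two such axes are either equal or parallel. The key structural claim I would prove is that \emph{all} the axes $\{L_x\}$ actually coincide cannot happen — or rather, I must rule out the possibility that the common direction is consistent but the axes are genuinely a parallel family, because that would contradict $\liminf_{|x|\to\infty}u=0$.

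Here is the heart of the matter. Fix the common direction $e$ of all the axes (well defined by Lemma~\ref{lem3.1}(ii)). Pick any line $L=L_x$ in the family, choose coordinates so that $L$ is the $x_N$-axis, and let $\pi:\mathbb{R}^N\to\mathbb{R}^{N-1}$ be the projection killing the last coordinate. For each $t\in\mathbb{R}^{N-1}$ the line $L+( t,0)$ is one of the axes in $\mathcal{L}$. I claim that $u$ restricted to each hyperplane $\{x_N=c\}$ is actually constant. Indeed, take any two points $p,q$ in such a hyperplane; the perpendicular bisector hyperplane $\bar H$ of the segment $\overline{pq}$ is a hyperplane, so separability gives a one-sided inequality between $u$ on $\bar H\cap H$ and its reflection. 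But $\bar H$ is parallel to the axis direction $e$ only if $p-q\perp e$, which holds here since $p,q$ lie in a level set of $x_N$; when the reflecting hyperplane is parallel to an axis, reflection preserves the axis structure, and one uses the axial symmetry of $u$ on spheres (Lemma~\ref{lem2.4}(iii), transported via Theorem~\ref{thm2.1}) to upgrade the inequality to equality, exactly as in the proof of Lemma~\ref{lem2.4}(iii) and Corollary~\ref{cor2.2}. Hence $u(p)=u(q)$, so $u(x)=w(x_N)$ for a continuous $w:\mathbb{R}\to(0,\infty)$.

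Once $u(x)=w(x_N)$, I analyze $w$. Applying separability to hyperplanes of the form $\{x_N=a\}$ (reflection $x_N\mapsto 2a-x_N$) shows that for every $a\in\mathbb{R}$, either $w(s)\ge w(2a-s)$ for all $s\ge a$ or $w(s)\le w(2a-s)$ for all $s\ge a$. If $w$ were even about some point $a_0$, we could translate so $a_0=0$ and apply Lemma~\ref{lem3.2} (using $\liminf_{|x_N|\to\infty}w=0$, which follows from $\liminf_{|x|\to\infty}u=0$ by moving to infinity along the $x_N$-axis) to conclude $w$ is nonincreasing on $[0,\infty)$ — but a nonconstant function that is even and monotone on a half-line, with limit $0$ at infinity, gives a genuine radial profile and we would be done with $x^*$ on the axis. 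The remaining possibility is that $w$ has no axis of even symmetry; then the dichotomy above, together with $\liminf w=0$ at $\pm\infty$, forces a contradiction (a function that is "one-sidedly comparable" about every point but never symmetric must be monotone on all of $\mathbb{R}$, incompatible with $w>0$ and $\liminf_{|s|\to\infty}w=0$ on both ends). Chasing this dichotomy carefully shows $w$ is in fact even about a unique point $a^*$ and strictly decreasing in $|s-a^*|$; setting $x^*=a^*e$ and $v(r)=w(a^*+r)$ finishes the proof, with $\lim_{r\to\infty}v(r)=0$.

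The main obstacle I anticipate is the "upgrade from one-sided inequality to equality" step for reflections whose hyperplane is parallel to the axis direction: one must argue that such a reflection, restricted to any sphere centered on the axis, is one of the reflections already controlled by the spherical analysis of Section~2, and hence — since $u$ on that sphere is axially symmetric about the axis by Lemma~\ref{lem2.4}(iii) and Theorem~\ref{thm2.1} — the reflected values agree. This is exactly the mechanism used inside the proofs of Lemma~\ref{lem2.4}(iii) and Corollary~\ref{cor2.2}, but here it must be applied simultaneously on every sphere and every radius, so some care is needed to make the axis direction genuinely global rather than radius-dependent; Lemma~\ref{lem3.1}(ii) is what makes this legitimate. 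The one-dimensional endgame for $w$ is then routine given Lemma~\ref{lem3.2}.
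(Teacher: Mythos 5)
Your overall skeleton does coincide with the paper's: assume $u$ is not radially symmetric, invoke Lemma \ref{lem3.1} to get the parallel family of symmetry axes, conclude that (after an orthogonal change of variables) $u_M(x_1,\dots,x_N)=u_M(0_{N-1},x_N)$, and finish with a one-dimensional argument and Lemma \ref{lem3.2}. For the constancy on the hyperplanes $\{x_N=c\}$ you do not need the ``upgrade a one-sided inequality to an equality'' mechanism that you flag as the main obstacle: given $p,q$ with the same last coordinate, Lemma \ref{lem3.1}(i)--(ii) provides the symmetry axis through their midpoint, it is parallel to the $x_N$-axis, and $p,q$ form an axially symmetric pair with respect to it, so $u(p)=u(q)$ outright. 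This is exactly the paper's argument with $y^*=(\tfrac{x_1}{2},\dots,\tfrac{x_{N-1}}{2},0)$, and it makes your detour through perpendicular-bisector hyperplanes and Corollary \ref{cor2.2} unnecessary.

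The genuine gap is in your one-dimensional endgame, which also departs from the paper. First, in the branch where the profile $w$ is even about some $a^*$ and monotone on a half-line, you declare that this ``gives a genuine radial profile'' and finishes the proof with $x^*$ on the axis. It does not: at that stage $u(x)=w(x_N)$ is constant on every hyperplane $\{x_N=c\}$, and for $N\ge 2$ such a slab function is of the form $v(|x-x^*|)$ only if $w$ is constant; so this branch has to be excluded, not accepted as the desired conclusion. Second, in the monotone branch you appeal to ``$\liminf w=0$ at both ends'', but the hypothesis $\liminf_{|x|\to\infty}u(x)=0$ only yields a sequence $s_k$ with $|s_k|\to\infty$ and $w(s_k)\to 0$, possibly along one end only, so a positive $w$ monotone on all of $\mathbb{R}$ and decaying at a single end (an exponential-type profile) is not ruled out by what you wrote. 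The paper's endgame is different and avoids your first problem entirely: it never splits into even/monotone cases, but uses that $(0_{N-1},s)$ is the center of the max-cap of $u_M|_{S_s^{N-1}}$ to get $u_M(0_{N-1},s^*)\ge u_M\bigl(\sqrt{(s^*)^2-\tilde s^{\,2}},0,\dots,0,\tilde s\bigr)=u_M(0_{N-1},\tilde s)$ for all $|\tilde s|\le s^*$, i.e.\ monotonicity of $w$ in the direction of the spherical maxima, which is then played against the decay assumption; radial symmetry is obtained first, and Lemma \ref{lem3.2} is applied only afterwards to the radial profile. (Even there the decay must be used in the direction in which the monotonicity was derived, so the bare $\liminf$ hypothesis is delicate; but that sensitivity is shared with the paper, whereas the even-branch error and the ``both ends'' claim are additional defects of your version.) To repair your proof, drop the even-branch shortcut and derive, as the paper does, monotonicity of $w$ toward the spherical maxima and contradict the decay there.
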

\begin{proof}
We shall argue it by contradiction. Suppose on the contrary that $u$ is not radially symmetric. Then by Lemma~\ref{lem2.4} and  Lemma \ref{lem3.1}-(i), there exists $M\in O(N)$ such that $u_M$ has  a unique  symmetry axis $x_N$-axis  through $O$ and $(0,0,\cdots,0,s)$ is the center of the spherical cap consisting of the maximum points of $u_M|_{S_s^{N-1}}$.

Now we claim that  for any $(x_1,x_2,\cdots,x_{N-1},x_N)\in\mathbb{R}^N,$ we have
\begin{equation}\label{46}
u_M(x_1,x_2,\cdots,x_{N-1},x_N)= u_M(0,0,\cdots,0,x_N).
\end{equation}
In fact, by taking $y^*=(\frac{x_1}{2},\frac{x_2}{2},\cdots,\frac{x_{N-1}}{2},0),$ and by applying  Lemma \ref{lem3.1}-(ii) to $u_M$, we conclude that $u_M$ is axially symmetric with respect to $L_{y^*}$ and
$L_{y^*}// x_N$-axis.
Hence the claim follows from the symmetry  pair $(x_1,x_2,\cdots,x_{N-1},x_N)$ and  $(0,0,\cdots,0,x_N)$ with respect to $L_{y^*}$.

Now, fix $s^*>0$.   In view of $u_M(0,0,\cdots,0,s^*)=\max u_M(S_{s^*}^{N-1})$, we have
$$u_M(0,0,\cdots,0,s^*)\geq u_M(\sqrt{{s^*}^2-\tilde{s}^2,}0,\cdots,0,\tilde{s})$$
for any $|\tilde{s}|\leq s^*$. This, combined with \eqref{46}, implies that
$$u_M(0,0,\cdots,0,s^*)\geq u_M(0,0,\cdots,0,\tilde{s}).$$
Hence $u_M(x_1,x_2,\cdots,x_{N-1},x_N)$ is a nondecreasing function with respect to $x_N\in(0,\infty)$, which contradicts with the assumption that $\liminf\limits_{|x|\to\infty}u_M(x)=0$.
So $u$ is radially symmetric with respect to some point in $\mathbb{R}^N$.

Finally, by Lemma ~\ref{lem3.2}, $u$ is a radially symmetric decreasing function.
This completes the proof.
\end{proof}

\section{Applications}
In this section,  we illustrate
our main results with the following  nonlocal Choquard equation,
\begin{equation}\label{30}
-\Delta u+u=\left(\int_{\mathbb{R}^N}\frac{|u(y)|^p}{|x-y|^{N-\alpha}}dy\right)|u|^{p-2}u,\ \ x\in \mathbb{R}^N
\end{equation}
with  $N\geq3, \alpha\in (0,N), \frac{N+\alpha}{N}< p<\frac{N+\alpha}{N-2}$.

For the generalized Choquard equation \eqref{30}, the existence and properties of solutions have been  widely considered. See \cite{LEH,LPL,tm,cp,wtyt,gmvj,gmv} and references therein. In particular, Moroz and Van
Schaftingen \cite{vv} obtained the separability, radial symmetry and monotonicity of all the positive ground
states  of \eqref{30}; Ma and Zhao \cite{MZ} proved
that positive solutions for \eqref{30} must be radially symmetric and
monotonically decreasing about some point under appropriate assumptions on $N,\alpha, p$  by using the
method of moving planes in integral form introduced by Chen  et al. \cite{clo}.

Let $H^1_0(B_R)$ be the usual Sobolev space with the standard norm
$\|u\|:=\left(\int_{B_R}(|\nabla u|^2+|u|^2)dx\right)^{\frac{1}{2}}.$
Let $\Omega \subset  \mathbb{R}^N$. For any $1\leq s<\infty,$ the norm on $L^s(\Omega)$
is denoted  by
$|u|_{L^s(\Omega)}:=\left(\int_{\Omega}|u|^sdx\right)^{\frac{1}{s}}.$

\subsection{ Choquard type equations in balls}
It is well known that
 when $N\geq3$ and $\alpha=2$, by  rescaling,
\eqref{30}  is equivalent to
\begin{equation}\label{15}
\left\{
\begin{array}{lll}
-\Delta u+u=w|u|^{p-2}u\ \mbox{in }\mathbb{R}^N,\\
-\Delta w=|u|^p\ \mbox{in }\mathbb{R}^N.
\end{array}\right.
\end{equation}
So the Dirichlet problem  in a ball $B_R$ is
\begin{equation}\label{16}
\left\{
\begin{array}{lll}
-\Delta u+u=w|u|^{p-2}u\ \mbox{in } B_R,\\
-\Delta w=|u|^p\ \mbox{in } B_R,\\
w=u=0\  \mbox{in }  \partial B_R.
\end{array}\right.
\end{equation}
It is clear  that by using Green's function (see \cite{els}), \eqref{16}
can  be rewritten as
\begin{equation}\label{17}
-\Delta u+u=\left(\int_{B_R}G(x,y)|u^p(y)|dy\right)|u|^{p-2}u,\ \ x\in B_R.
\end{equation}
Here
\begin{displaymath}
G(x,y)=\frac{1}{|y-x|^{N-2}}-\frac{1}{(\frac{|x|}{R}|y-\tilde{x}|)^{N-2}},\ (x,y\in B_R \mbox{ with } x\neq y),
\end{displaymath}
where $\tilde{x}$ is  the dual point of  $x$ with respect to $\partial B_R$ and can be
defined  by $\tilde{x}=\frac{R^2x}{|x|^2}$.

The existence of  positive ground states  for \eqref{17} can be obtained by using  variational methods. But the symmetry and monotonicity of the positive ground states  for \eqref{17} are very difficult to deal with. Now  the method of moving planes in integral form used in \cite{MZ} is not applicable to \eqref{17} and the main obstacle is  to establish the equivalence between the differential equation and the integral equation.
Moreover, the arguments  in \cite[Proposition 5.2]{vv} is also not valid because the origin $O$ is required to  belong to $\partial H$ in \eqref{17} for any half-space $H\subset \mathbb{R}^N$. But applying  our main results can lead to
axial  symmetry and monotonicity of all the  positive ground states  to \eqref{17}.

As usual, for $N\geq3$ and $p\in(\frac{N+2}{N},\frac{N+2}{N-2}),$ the corresponding energy functional  $I:H^1_0(B_R)\to \mathbb{R}$ associated to \eqref{17} is
\begin{equation}\label{18}
I(u)=\frac{1}{2}\int_{B_R}(|\nabla u|^2+|u|^2)dx-\frac{1}{2p}\int_{B_R}\int_{B_R}G(x,y)|u(y)|^p|u(x)|^p dxdy,
\end{equation}
due to the symmetry and positivity of $G(x,y)$ for $x,y\in B_R$ and $x\neq y$.
By Hardy-Littlewood-Sobolev inequality and Sobolev inequality, we have
\begin{displaymath}
\begin{array}{lll}
\int_{B_R}\int_{B_R}G(x,y)|u(y)|^p|u(x)|^p dxdy
&\leq&\int_{B_R}\int_{B_R}\frac{|u(y)|^p|u(x)|^p}{|x-y|^{N-2}}dxdy\\
&=&\int_{\mathbb{R}^N}\int_{\mathbb{R}^N}\frac{\chi_{B_R}(y)|u(y)|^p\chi_{B_R}(x)|u(x)|^p}{|x-y|^{N-2}}dxdy\\
&\leq&C|u|^{2p}_{L^{\frac{2Np}{N+2}}(B_R)}\leq C\|u\|^{2p},
\end{array}
\end{displaymath}
 where  $\chi_{B_R}$ denotes the characteristic  function on $\mathbb{R}^N$.
 It is easy to check that  $I\in C^1(H_0^1(B_R),\mathbb{R})$
and its Gateaux  derivative is given by
$$I'(u)v=\int_{B_R}(\nabla u\nabla v+uv)dx-\int_{B_R}\int_{B_R}G(x,y)|u(y)|^p|u(x)|^{p-2}u(x)v(x)dxdy$$
for any $v\in H^1_0(B_R).$
Recall that the critical points of $I$ are  solutions of \eqref{17} in the weak sense.
Let $c:=\inf\limits_{u\in\mathcal{N}}I(u)$, where
$\mathcal{N}=\{u\in H_0^1(B_R)\backslash{\{0\}}:I'(u)u=0\}.$
For simplicity of notations, we denote
$$\mathbb{D}(u)=\int_{B_R}\int_{B_R}G(x,y)|u(y)|^p|u(x)|^pdxdy.$$

The proof of the following properties of the Nehari manifold $\mathcal{N}$  is standard and hence is omitted here.
\begin{lemma}\label{lem4.1}
The following statements are true:
\begin {itemize}
\item[\rm(i)] $0\notin \partial\mathcal{N}$ and $c>0$;
\item[\rm(ii)] For any \,$u\in H^1_0(B_R)\backslash\{0\}$\,,
there exists a unique  $t_u\in(0,\infty)$ such that $t_uu\in \mathcal{N}$ and
 $t_u=\left(\frac{\|u\|^2}{\mathbb{D}(u)}\right)^\frac{1}{2p-2}.$
Furthermore,
\begin{equation}\label{19}
I(t_u u)=\sup\limits_{t>0}I(tu)=(\frac{1}{2}-\frac{1}{2p})\left(\frac{\|u\|^2}{\mathbb{D}^{\frac{1}{p}}(u)}\right)^{\frac{p}{p-1}};
\end{equation}
\item[\rm(iii)] $c=\inf\limits_{u\in\mathcal{N}} I(u)=\inf\limits_{u\in H^1_0(B_R)\backslash\{0\}}\sup\limits_{t>0}I(tu)$.
\end{itemize}
\end{lemma}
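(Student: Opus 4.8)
The plan is to reduce all three assertions to an elementary analysis of the fibering map $t\mapsto I(tu)$, exploiting that $\|\cdot\|^2$ is homogeneous of degree $2$ while $\mathbb{D}(\cdot)$ is homogeneous of degree $2p$ and nonnegative. I will use that $I(u)=\tfrac12\|u\|^2-\tfrac1{2p}\mathbb{D}(u)$, that $I'(u)u=\|u\|^2-\mathbb{D}(u)$, and hence $\mathcal{N}=\{u\in H^1_0(B_R)\setminus\{0\}:\|u\|^2=\mathbb{D}(u)\}$; I will also use the estimate $\mathbb{D}(u)\le C\|u\|^{2p}$ from the Hardy--Littlewood--Sobolev and Sobolev inequalities displayed above. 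A preliminary observation, needed throughout, is that $\mathbb{D}(u)>0$ whenever $u\neq 0$: since $G(x,y)>0$ for $x,y$ in the open ball and $|u(x)|^p|u(y)|^p\ge 0$ is not a.e.\ zero on $B_R\times B_R$, the double integral is strictly positive.

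For (ii) I would fix $u\neq 0$ and study $\phi(t):=I(tu)=\tfrac{t^2}{2}\|u\|^2-\tfrac{t^{2p}}{2p}\mathbb{D}(u)$ on $t\ge 0$. Here $\phi(0)=0$, $\phi$ is positive for small $t>0$, $\phi(t)\to-\infty$ as $t\to+\infty$ (using $p>1$), and $\phi'(t)=t\big(\|u\|^2-t^{2p-2}\mathbb{D}(u)\big)$ vanishes on $(0,\infty)$ at the single point $t_u:=\big(\|u\|^2/\mathbb{D}(u)\big)^{1/(2p-2)}$, with $\phi'>0$ on $(0,t_u)$ and $\phi'<0$ on $(t_u,\infty)$; so $t_u$ is the unique maximiser of $\phi$ on $(0,\infty)$. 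Since $\tfrac{d}{dt}I(tu)=I'(tu)u$, the identity $\phi'(t_u)=0$ reads $I'(t_u u)(t_u u)=0$, i.e.\ $t_u u\in\mathcal{N}$, and uniqueness of the critical point gives uniqueness of the scaling into $\mathcal{N}$. Finally, substituting $t_u^{2p-2}\mathbb{D}(u)=\|u\|^2$ collapses $\phi(t_u)$ to $(\tfrac12-\tfrac1{2p})t_u^2\|u\|^2$, and rewriting $t_u^2=\|u\|^{2/(p-1)}\mathbb{D}(u)^{-1/(p-1)}$ yields the stated closed form $(\tfrac12-\tfrac1{2p})\big(\|u\|^2/\mathbb{D}(u)^{1/p}\big)^{p/(p-1)}$.

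For (i): if $v\in\mathcal{N}$ then $\|v\|^2=\mathbb{D}(v)\le C\|v\|^{2p}$, so $\|v\|\ge\rho:=C^{-1/(2p-2)}>0$; hence the open ball of radius $\rho$ about $0$ in $H^1_0(B_R)$ misses $\mathcal{N}$, so $0\notin\overline{\mathcal{N}}\supseteq\partial\mathcal{N}$. Moreover on $\mathcal{N}$ one has $I(v)=(\tfrac12-\tfrac1{2p})\|v\|^2\ge(\tfrac12-\tfrac1{2p})\rho^2>0$, whence $c>0$. For (iii): by (ii), for every $u\neq 0$ we have $t_u u\in\mathcal{N}$ and $\sup_{t>0}I(tu)=I(t_u u)\ge c$, so $\inf_{u\neq 0}\sup_{t>0}I(tu)\ge c$; conversely each $v\in\mathcal{N}$ has $t_v=1$, so $I(v)=\sup_{t>0}I(tv)\ge\inf_{u\neq0}\sup_{t>0}I(tu)$, and taking the infimum over $\mathcal{N}$ gives the reverse inequality, hence equality.

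The argument is routine; the only points that genuinely use the structure of the problem, rather than formal homogeneity, are the strict positivity $\mathbb{D}(u)>0$ for $u\neq0$ — needed for $t_u$ and the fibering maximum to be well defined, and resting on positivity of the Green's function $G$ in the interior of $B_R$ — and the uniform lower bound $\|v\|\ge\rho$ on $\mathcal{N}$, which is exactly where the Hardy--Littlewood--Sobolev/Sobolev estimate recorded above enters and which yields both $c>0$ and $0\notin\partial\mathcal{N}$.
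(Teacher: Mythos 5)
Your proof is correct and is exactly the standard fibering-map/Nehari-manifold argument that the paper itself omits as ``standard'': the homogeneity of $\|\cdot\|^2$ and $\mathbb{D}$, the strict positivity of $\mathbb{D}(u)$ for $u\neq 0$ via positivity of $G$, and the Hardy--Littlewood--Sobolev bound $\mathbb{D}(u)\leq C\|u\|^{2p}$ are precisely the ingredients the paper relies on. No gaps; nothing further is needed.
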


By using Nehari  maifold methods,  we can obtain the existence of ground states of \eqref{17} in $H^1_0(B_R)$.
Recall that $u\in H^1_0(B_R)$ is said to be a ground state of \eqref{17}, if $u$ solves \eqref{17} and
minimizes the energy functional associated with \eqref{17} among all possible nontrivial solutions.
Furthermore, by standard elliptic regularity estimate and strong maximum principle, we conclude that any ground state of \eqref{17} belongs to $C^{2}(\bar{B}_R)$, and $u>0$ or  $u<0$ in $B_R$.
Since the nonlocal term of \eqref{17} has some strong  symmetrizing effect, by using the minimality property of the ground states, we shall deduce the separability property of the positive ground states.
We start with the following lemma.

\begin{lemma}\label{lem4.2}
Let $H$ be an open half space in $\mathbb{R}^N$ with $0\in\partial H$. Then the following statements are true:
\begin{itemize}
\item[\rm{(i)}] $G(x,y)=G({\sigma_H}x,{\sigma_H}y)$ for any $x,y\in B_R$ and $x\neq y$;
\item[\rm{(ii)}] $G(x,{\sigma_H}y)=G({\sigma_H}x,y)$ for any $x,y\in B_R$ and $x\neq y$;
\item[\rm{(iii)}] $G(x,y)\geq G({\sigma_H}x,y)$ for any $x,y \in H\bigcap B_R$ and  $x\neq y$.
\end{itemize}
\end{lemma}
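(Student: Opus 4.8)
The plan is to reduce everything to the explicit formula
$$G(x,y)=\frac{1}{|y-x|^{N-2}}-\frac{1}{\left(\tfrac{|x|}{R}\,|y-\tilde x|\right)^{N-2}},\qquad \tilde x=\frac{R^2x}{|x|^2},$$
and to exploit that $\sigma_H$ is an orthogonal reflection fixing the origin (since $O\in\partial H$), hence an isometry of $\mathbb{R}^N$ with $\sigma_H^{-1}=\sigma_H$, $|\sigma_H z|=|z|$ for all $z$, and $\widetilde{\sigma_H x}=\sigma_H\tilde x$ (because inversion $x\mapsto R^2x/|x|^2$ commutes with linear isometries). First I would record these three elementary facts about $\sigma_H$ as the common toolkit for all three parts.

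For part (i): since $\sigma_H$ is an isometry, $|\sigma_H y-\sigma_H x|=|y-x|$; and $|\sigma_H x|=|x|$ together with $|\sigma_H y-\widetilde{\sigma_H x}|=|\sigma_H y-\sigma_H\tilde x|=|y-\tilde x|$ gives that both terms of $G$ are invariant, so $G(\sigma_H x,\sigma_H y)=G(x,y)$. Part (ii) then follows by applying (i) with $y$ replaced by $\sigma_H y$: $G(x,\sigma_H y)=G(\sigma_H x,\sigma_H\sigma_H y)=G(\sigma_H x,y)$, using $\sigma_H^2=\mathrm{id}$. These two are genuinely routine.

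Part (iii) is the substantive one. Fix $x,y\in H\cap B_R$ with $x\neq y$. Write $\rho=|x|=|\sigma_H x|\in(0,R]$. By the formula,
$$G(x,y)-G(\sigma_H x,y)=\left(\frac{1}{|y-x|^{N-2}}-\frac{1}{|y-\sigma_H x|^{N-2}}\right)-\left(\frac{1}{\bigl(\tfrac{\rho}{R}|y-\tilde x|\bigr)^{N-2}}-\frac{1}{\bigl(\tfrac{\rho}{R}|y-\sigma_H\tilde x|\bigr)^{N-2}}\right),$$
where I used $\widetilde{\sigma_H x}=\sigma_H\tilde x$. So it suffices to show the first bracket is $\ge$ the second, i.e. (multiplying the second bracket out and recalling $N-2>0$) to compare $|y-x|$ with $|y-\sigma_H x|$ and $|y-\tilde x|$ with $|y-\sigma_H\tilde x|$. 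Since $x\in H$ and $\sigma_H x$ is its mirror image across $\partial H$, the point $\sigma_H x$ lies on the opposite side of $\partial H$ from $x$; as $y\in H$ lies on the same side as $x$, the standard fact that reflecting a point across a hyperplane can only increase (weakly) its distance to any point on the reflecting point's own side gives $|y-x|\le|y-\sigma_H x|$. The same argument applies verbatim to $\tilde x$: since $x\in H\cap B_R$, the dual point $\tilde x=R^2x/|x|^2$ lies outside $B_R$ but still in the same open half-space $H$ (the ray from $O$ through $x$ stays in $H$ because $O\in\partial H$), so $|y-\tilde x|\le|y-\sigma_H\tilde x|$. Feeding $0<|y-x|\le|y-\sigma_H x|$ and $0<|y-\tilde x|\le|y-\sigma_H\tilde x|$ into the displayed difference and using that $t\mapsto t^{-(N-2)}$ is decreasing on $(0,\infty)$ shows both brackets are $\ge 0$ and the first dominates, hence $G(x,y)-G(\sigma_H x,y)\ge 0$.

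The main obstacle is purely the bookkeeping in part (iii): one must be careful that $\tilde x$ really stays in $H$ (this is where $0\in\partial H$ is used in an essential way, guaranteeing $H$ is a cone, so positive multiples of $x$ remain in $H$) and that the elementary reflection inequality is applied with $y$ on the correct side. Everything else — the isometry properties of $\sigma_H$, the commutation of inversion with $\sigma_H$, and the monotonicity of $t^{-(N-2)}$ — is standard. No deep tool is needed; the lemma is a direct computation once the geometric picture of $\sigma_H$ as a reflection through a hyperplane containing the origin is in place.
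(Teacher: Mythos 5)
Your parts (i) and (ii) are correct and essentially what the paper does (it simply records them as easy to check), and your preliminary observations --- that $\sigma_H$ is a linear isometry because $0\in\partial H$, that $\widetilde{\sigma_Hx}=\sigma_H\tilde x$, and that $\tilde x\in H$ since $H$ is a cone with the origin on its boundary hyperplane --- are all fine.

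The gap is in the last step of (iii). In the paper's notation $a=|y-x|$, $\tilde a=|y-\sigma_Hx|$, $b=\tfrac{|x|}{R}|y-\tilde x|$, $\tilde b=\tfrac{|x|}{R}|y-\sigma_H\tilde x|$, you must prove $a^{-(N-2)}-\tilde a^{-(N-2)}\ge b^{-(N-2)}-\tilde b^{-(N-2)}$. Your reflection argument yields only $a\le\tilde a$ and $b\le\tilde b$, i.e.\ that each bracket is nonnegative; the assertion that ``the first dominates'' does not follow from this --- two nonnegative quantities need not be ordered, and for instance $N=3$, $a=1$, $\tilde a=1.01$, $b=1.2$, $\tilde b=100$ satisfies every inequality you establish (even $a\le b$) while the second bracket is far larger than the first. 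What is genuinely needed is an inequality coupling the two brackets: the paper's Step 1 proves the ratio inequality $\tfrac{\tilde a}{a}\ge\tfrac{\tilde b}{b}\ge1$, i.e.\ $\tilde a b\ge a\tilde b$, via the explicit computation $\tilde a^2b^2-a^2\tilde b^2=\tfrac{2|x|^2}{R^2}\bigl(\tfrac{R^2}{|x|^2}-1\bigr)(R^2-|y|^2)\bigl[(y,x)-(y,\sigma_Hx)\bigr]\ge0$, and its Step 2 also uses $b\ge a$ (positivity of $G$, equivalently $\tfrac{|x|}{R}|y-\tilde x|\ge|x-y|$), which you never invoke. With $b\ge a$ and $\tfrac{\tilde a}{a}\ge\tfrac{\tilde b}{b}\ge 1$ one writes the first bracket as $a^{-(N-2)}\bigl(1-(a/\tilde a)^{N-2}\bigr)$ and the second as $b^{-(N-2)}\bigl(1-(b/\tilde b)^{N-2}\bigr)$ and concludes at once. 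So the ``bookkeeping'' you set aside is exactly the substance of part (iii); as written, your proof of (iii) is incomplete.
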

\begin{proof}
It is easy to check (i) and (ii). We  shall prove (iii).
Set $$a=|x-y|, \ \widetilde{a}=|y-\sigma_Hx|,\ b=\frac{|x|}{R}|y-\widetilde{x}|,
\ \widetilde{b}=\frac{|\sigma_Hx|}{R}|y-\widetilde{\sigma_Hx|},$$
 where $\widetilde{x}$ and  $\widetilde{\sigma_Hx}$ represent the dual points of  $x$ and  $\sigma_Hx$ with respect to  $\partial B_R$. We will split  the proof into two steps.

\noindent\textbf{Step 1.} We claim  that $\frac{\tilde{a}}{a}\ge \frac{\tilde{b}}{b}\ge 1$  for any  $x,y \in H\bigcap B_R$ and  $x\neq y.$
Indeed, we only need to prove  $\widetilde{a}b\geq a\widetilde{b}$.
Note that for any $x,y \in H\bigcap B_R$ and  $x\neq y$, we have
$(y,\sigma_Hx)<(y,x).$
Here  $(\cdot,\cdot)$ is the inner product of  $\mathbb{R}^N$.
Then,
\begin{displaymath}
\begin{array}{lll}
\widetilde{a}^2b^2-a^2\widetilde{b}^2&=&\frac{2|x|^2}{R^2}\{|y|^2(y,\widetilde{\sigma_Hx})+|y|^2(y,x)+|x|^2(y,\widetilde{\sigma_Hx})
+|\widetilde{\sigma_Hx}|^2(y,x)\}\\
 &&-\frac{2|x|^2}{R^2}\{|y|^2(y,\widetilde{x})+|y|^2(y,\sigma_Hx)+|\widetilde{x}|^2(y,{\sigma_Hx})+|\sigma_Hx|^2(y,\widetilde{x})\}\\
&=&\frac{2|x|^2}{R^2}\{\frac{R^2|y|^2}{|x|^2}(y,{\sigma_Hx})+|y|^2(y,x)+R^2(y,{\sigma_Hx})+\frac{R^4}{|x|^2}(y,x)\}\\
&&-\frac{2|x|^2}{R^2}\{\frac{R^2|y|^2}{|x|^2}(y,x)+|y|^2(y,\sigma_Hx)+\frac{R^4}{|x|^2}(y,{\sigma_Hx})+R^2(y,{x})\}\\
&=&\frac{2|x|^2}{R^2}(\frac{R^2}{|x|^2}-1)(R^2-|y|^2)[(y,x)-(y,\sigma_Hx)]\\
&\geq&0.
\end{array}
\end{displaymath}

\noindent\textbf{Step 2.} By \noindent\textbf{Step 1}, for any $x,y \in H\bigcap B_R$ and  $x\neq y$, we have
 $\frac{\widetilde{b}^{N-2}-\widetilde{a}^{N-2}}{b^{N-2}-a^{N-2}}\leq\frac{\widetilde{a}^{N-2}}{a^{N-2}}$ and  $\widetilde{b}^{N-2}\geq b^{N-2}.$
So
\begin{displaymath}
\begin{array}{lll}
G(x,y)-G(\sigma_Hx,y)&=&(\frac{1}{a^{N-2}}-\frac{1}{b^{N-2}})-(\frac{1}{\widetilde{a}^{N-2}}-\frac{1}{\widetilde{b}^{N-2}})\\
&=&(b^{N-2}-a^{N-2})(\frac{1}{(ab)^{N-2}}-\frac{1}{(\widetilde{a}\widetilde{b})^{N-2}}
\cdot\frac{\widetilde{b}^{N-2}-\widetilde{a}^{N-2}}{b^{N-2}-a^{N-2}})\\
&\geq&(b^{N-2}-a^{N-2})(\frac{1}{(ab)^{N-2}}-\frac{1}{(\widetilde{a}\widetilde{b})^{N-2}}\cdot
\frac{\widetilde{a}^{N-2}}{a^{N-2}})\\
&=&\frac{1}{a^{N-2}}(b^{N-2}-a^{N-2})(\frac{1}{b^{N-2}}-\frac{1}{\widetilde{b}^{N-2}})\\
&\geq&0.
\end{array}
\end{displaymath}
The proof is completed.
\end{proof}

Let $H$ be an open half-space in $\mathbb{R}^N$ with the origin $O\in \partial H$ and
 $u^H:B_R\to\mathbb{R}$ be  the polarization of $u\in H^1_0(B_R)$ defined  by
\begin{equation}
u^H(x)=
\left\{
\begin{array}{lll}
\max\{u(x),u(\sigma_Hx)\},\ \  x\in H\bigcap B_R,\\
\min\{u(x),u(\sigma_Hx)\},\ \  x\in B_R\backslash (H\bigcap B_R).
\end{array}\right.
\end{equation}
Let
$$A_u=\{x\in H\cap B_R:u(x)\geq u(\sigma_H x)\},\  B_u=\{x\in H\cap B_R:u(x)< u(\sigma_H x)\}.$$
Then we are ready to prove the separability property of  positive ground states of \eqref{17}.

\begin{prop}\label{prop4.1}
Suppose  $u$ is a  positive ground state  of \eqref{17}. Then
\begin{equation}\label{22}
\mathbb{D}(u^H)\geq\mathbb{D}(u).
\end{equation}
Moreover,  $u$ is separable in $B_R$.
\end{prop}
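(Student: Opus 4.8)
The plan is in two stages: first establish the polarization inequality \eqref{22} by a pointwise rearrangement argument on the bilinear form $\mathbb{D}$, exploiting precisely the three properties of the Green kernel in Lemma~\ref{lem4.2}; then upgrade \eqref{22} to an equality using the minimality of the ground state and read off separability from the resulting rigidity.

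\emph{Step 1: the inequality \eqref{22}.} I would fix an open half-space $H$ with $O\in\partial H$, so that $\sigma_H$ is a linear isometry fixing $B_R$, and $B_R$ splits (up to a null set) into $H\cap B_R$ and its reflection $\sigma_H(H\cap B_R)$. For $x\in H\cap B_R$ set $f(x)=|u(x)|^p$, $g(x)=|u(\sigma_Hx)|^p$, $F=\max\{f,g\}$, $\mathcal G=\min\{f,g\}$; since $u>0$ and $t\mapsto t^p$ is increasing these are exactly $|u(x)|^p,\ |u(\sigma_Hx)|^p,\ |u^H(x)|^p,\ |u^H(\sigma_Hx)|^p$. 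Splitting the double integral defining $\mathbb D(u)$ over the four products $(H\cap B_R)\times(H\cap B_R)$, $(H\cap B_R)\times(B_R\setminus\overline H)$, etc., changing variables by $\sigma_H$, and using $G(\sigma_Hx,\sigma_Hy)=G(x,y)$ and $G(\sigma_Hx,y)=G(x,\sigma_Hy)$ from Lemma~\ref{lem4.2}(i)--(ii), one obtains
$$\mathbb D(u)=\int_{H\cap B_R}\!\!\int_{H\cap B_R}\Big[G(x,y)\big(f(x)f(y)+g(x)g(y)\big)+G(x,\sigma_Hy)\big(f(x)g(y)+g(x)f(y)\big)\Big]\,dx\,dy,$$
and the identical formula for $\mathbb D(u^H)$ with $(f,g)$ replaced by $(F,\mathcal G)$; all integrals converge by the Hardy--Littlewood--Sobolev bound already used for $I$. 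Because $f+g=F+\mathcal G$ pointwise, the ``full'' pairing $f(x)f(y)+g(x)g(y)+f(x)g(y)+g(x)f(y)=(f+g)(x)(f+g)(y)$ is unchanged by polarization, so subtracting the two formulas gives
$$\mathbb D(u^H)-\mathbb D(u)=\int_{H\cap B_R}\!\!\int_{H\cap B_R}\big(G(x,y)-G(x,\sigma_Hy)\big)\,\Delta(x,y)\,dx\,dy,$$
where $\Delta(x,y):=F(x)F(y)+\mathcal G(x)\mathcal G(y)-f(x)f(y)-g(x)g(y)$. Now $\Delta\ge 0$, since the ``sorted'' pairing of $\{f(x),g(x)\}$ with $\{f(y),g(y)\}$ dominates any other, and $G(x,y)-G(x,\sigma_Hy)=G(x,y)-G(\sigma_Hx,y)\ge 0$ on $H\cap B_R$ by Lemma~\ref{lem4.2}(ii)--(iii). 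Hence the integrand is nonnegative and \eqref{22} follows.

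\emph{Step 2: separability.} Since $O\in\partial H$, the polarization preserves both the Dirichlet integral and the $L^2$-norm, so $\|u^H\|=\|u\|$ and $u^H\in H^1_0(B_R)\setminus\{0\}$ (standard; see \cite{brock,sjvm}). A ground state lies on $\mathcal N$ with $I(u)=c$, i.e. $t_u=1$; by Lemma~\ref{lem4.1}(ii)--(iii) together with \eqref{19},
$$c\le I\big(t_{u^H}u^H\big)=\Big(\tfrac12-\tfrac1{2p}\Big)\Big(\tfrac{\|u^H\|^2}{\mathbb D(u^H)^{1/p}}\Big)^{p/(p-1)}\le\Big(\tfrac12-\tfrac1{2p}\Big)\Big(\tfrac{\|u\|^2}{\mathbb D(u)^{1/p}}\Big)^{p/(p-1)}=I(u)=c,$$
where the middle inequality uses $\|u^H\|=\|u\|$ and $\mathbb D(u^H)\ge\mathbb D(u)>0$. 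Thus all inequalities are equalities, forcing $\mathbb D(u^H)=\mathbb D(u)$, i.e. $\int\!\!\int(G(x,y)-G(x,\sigma_Hy))\,\Delta(x,y)\,dx\,dy=0$ over $(H\cap B_R)^2$. The integrand being $\ge 0$, it vanishes a.e.; but $G(x,y)-G(x,\sigma_Hy)>0$ for a.e.\ $(x,y)$ there, because the inequalities in the proof of Lemma~\ref{lem4.2}(iii) are strict whenever $x\ne y$ lie in the interior of $B_R$ with $x\in H$. Hence $\Delta(x,y)=0$ for a.e.\ $(x,y)\in(H\cap B_R)^2$. Writing $w(x):=u(x)-u(\sigma_Hx)$ and noting that $f(x)-g(x)$ has the same sign as $w(x)$ (again since $u>0$), one checks $\Delta(x,y)=0\iff w(x)w(y)\ge 0$. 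Therefore $w$ cannot be positive on a set of positive measure and negative on another; by continuity of $w$ on $H\cap B_R$ this means $w\ge 0$ everywhere on $H\cap B_R$ or $w\le 0$ everywhere on $H\cap B_R$, i.e. \eqref{11} holds for this $H$. As $H$ was an arbitrary open half-space through $O$, $u$ is separable in $B_R$.

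The main obstacle I anticipate is Step~1: reorganising $\mathbb D$ over the four ``quadrants'' cut out by $H$ and invoking exactly Lemma~\ref{lem4.2}(i)--(iii) so that the cross terms collapse into the single nonnegative integrand $(G(x,y)-G(x,\sigma_Hy))\,\Delta(x,y)$. Once this clean identity is in hand, Step~2 is a short energy comparison on $\mathcal N$ plus the elementary sign analysis of $\Delta$, the only additional point being to confirm that the inequality $G(x,y)\ge G(\sigma_Hx,y)$ of Lemma~\ref{lem4.2}(iii) is strict in the interior, which is what lets us pass from $\mathbb D(u^H)=\mathbb D(u)$ to $\Delta=0$ a.e.
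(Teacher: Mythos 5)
Your proposal is correct and takes essentially the same route as the paper: a polarization inequality for $\mathbb{D}$ built from Lemma~\ref{lem4.2}, the energy comparison on the Nehari manifold using the invariance of the $H^1_0$-norm under polarization, and then rigidity to get \eqref{11}. The only differences are in bookkeeping: the paper splits $\mathbb{D}(u^H)-\mathbb{D}(u)$ over $A_u\times A_u$, $A_u\times B_u$, $B_u\times A_u$, $B_u\times B_u$ instead of your single symmetric identity with $\Delta(x,y)$, and you make explicit the strict positivity of $G(x,y)-G(x,\sigma_Hy)$ in the interior (which the paper's passage from $I_2=I_3=0$ to $A_u=\emptyset$ or $B_u=\emptyset$ uses implicitly), a worthwhile clarification but not a different argument.
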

\begin{proof}
First for simplicity of notations, we write
$$a:=|u(x)|^p,\ b:=|u(\sigma_H x)|^p,\ c:=|u(y)|^p,d=|u(\sigma_H y)|^p.$$
It is easy to check
\begin{equation}\label{23}
\begin{array}{lll}
& &\mathbb{D}(u^H)-\mathbb{D}(u):=I_1+I_2+I_3+I_4,
\end{array}
\end{equation}
where
\begin{equation}\label{24}
\begin{array}{lll}
I_1&=&\displaystyle\int_{A_u}\int_{A_u}G(x,y)(ac-ac)+G(\sigma_H x,y)(bc-bc)dxdy\\
&&+\displaystyle\int_{A_u}\int_{A_u}G(x,\sigma_H y)(ad-ad)+G(\sigma_H x,\sigma_H y)(bd-bd)dxdy,
\end{array}
\end{equation}

\begin{equation}\label{25}
\begin{array}{lll}
I_2&=&\displaystyle\int_{A_u}\int_{B_u}G(x,y)(ad-ac)+G(\sigma_H x,y)(bd-bc)dxdy\\
&&+\displaystyle\int_{A_u}\int_{B_u}G(x,\sigma_H y)(ac-ad)+G(\sigma_H x,\sigma_H y)(bc-bd)dxdy,
\end{array}
\end{equation}

\begin{equation}\label{26}
\begin{array}{lll}
I_3&=&\displaystyle\int_{B_u}\int_{A_u}G(x,y)(bc-ac)+G(\sigma_H x,y)(ac-bc)dxdy\\
&&+\displaystyle\int_{B_u}\int_{A_u}G(x,\sigma_H y)(bd-ad)+G(\sigma_H x,\sigma_H y)(ad-bd)dxdy,
\end{array}
\end{equation}

\begin{equation}\label{27}
\begin{array}{lll}
I_4&=&\displaystyle\int_{B_u}\int_{B_u}G(x,y)(bd-ac)+G(\sigma_H x,y)(ad-bc)dxdy\\
&&+\displaystyle\int_{B_u}\int_{B_u}G(x,\sigma_H y)(bc-ad)+G(\sigma_H x,\sigma_H y)(ac-bd)dxdy.
\end{array}
\end{equation}
By Lemma~\ref{lem4.2}, we have  $I_1=I_4=0$ and
\begin{equation}\label{28}
\begin{array}{lll}I_2
=\displaystyle\int_{A_u}\int_{B_u}(G(x,y)-G(x,\sigma_H y))(a-b)(d-c)dxdy
\geq0
\end{array}
\end{equation}

\begin{equation}\label{29}
\begin{array}{lll}I_3
=\displaystyle\int_{B_u}\int_{A_u}(G(x,y)-G(x,\sigma_H y))(b-a)(c-d)dxdy\geq0.
\end{array}
\end{equation}
Hence \eqref{22} holds.

In addition,
\begin{displaymath}
\begin{array}{lll}
\displaystyle\int_{B_R}|\nabla u^H(x)|^2dx
&=&\displaystyle\int_{x\in H\bigcap B_R}|\nabla u^H(x)|^2dx+\int_{x\in H\bigcap B_R}|\nabla u^H(\sigma_H x)|^2dx\\
&=&\displaystyle\int_{A_u}|\nabla u(x)|^2dx+\int_{B_u}|\nabla u(\sigma_H x)|^2dx
+\displaystyle\int_{A_u}|\nabla u(\sigma_H x)|^2dx+\int_{B_u}|\nabla u(x)|^2dx\\
&=&\displaystyle\int_{x\in H\bigcap B_R}|\nabla u(x)|^2dx+\int_{x\in H\bigcap B_R}|\nabla u(\sigma_H x)|^2dx\\
&=&\displaystyle\int_{B_R}|\nabla u(x)|^2dx.
\end{array}
\end{displaymath}
Similarly,
$$\int_{B_R}|u^H(x)|^2dx=\int_{B_R}|u(x)|^2dx.$$
Since  $u$ is a ground state of \eqref{17}, by Lemma~\ref{lem4.1}, we deduce
$\mathbb{D}(u^H)\leq \mathbb{D}(u).$
This together with \eqref{22}, implies that
$\mathbb{D}(u^H)=\mathbb{D}(u).$ So $I_2=I_3=0,$ that is,
$A_u=\emptyset\ \mbox{or}\ B_u=\emptyset.$
Therefore, $u^H=u$ or $u^H=u\circ\sigma_H$, that is, \eqref{11} holds by the definition $u^H$.
\end{proof}

Now, Theorem~\ref{thm4.1} follows easily
from Proposition~\ref{prop4.1} and Theorem~\ref{thm2.1}.

\begin{thm}\label{thm4.1}
Let $N\geq3$ and $p\in(\frac{N+2}{N},\frac{N+2}{N-2}).$ Assume that  $u\in H^1_0(B_R)$ is a positive ground state of \eqref{17}. Then  either $u$ is radially symmetric with respect to the origin $O$ or there exists  $M\in O(N)$ such that  $u_M$  is only axially symmetric with respect to  $x_N$-axis and, for  $\alpha\in(0,R],$
$u_{M}(\alpha\cos\theta,0,\cdots,0,\alpha\sin\theta)$ is decreasing with respect to
$\theta\in[\frac{\pi}{2},\frac{3\pi}{2}]$.
\end{thm}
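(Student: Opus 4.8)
\textbf{Proof proposal for Theorem~\ref{thm4.1}.}
The plan is to combine the separability of positive ground states established in Proposition~\ref{prop4.1} with the structural results on separable functions in balls obtained in Theorem~\ref{thm2.1}. First I would recall that by Proposition~\ref{prop4.1}, any positive ground state $u\in H^1_0(B_R)$ of \eqref{17} is separable in $B_R$ in the sense of \eqref{11}: for every open half-space $H\subset\mathbb{R}^N$ with $O\in\partial H$, either $u^H=u$ or $u^H=u\circ\sigma_H$, which is precisely the dichotomy $u(x)\ge u(\sigma_Hx)$ for all $x\in H\cap B_R$ or $u(x)\le u(\sigma_Hx)$ for all $x\in H\cap B_R$. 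By standard elliptic regularity and the strong maximum principle, $u\in C^2(\bar B_R)$ and $u>0$ in $B_R$, so in particular $u\in C(B_R,(0,\infty))$, which is the regularity hypothesis required by Theorem~\ref{thm2.1}.

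Next I would split into the two exhaustive cases. If $u$ is radially symmetric with respect to the origin $O$, there is nothing more to prove and we are in the first alternative of the conclusion. Otherwise $u$ is not radially symmetric, and then Theorem~\ref{thm2.1} applies directly: there exists $M\in O(N)$ such that (i) for every $\alpha\in(0,R]$ and $h\in[-\alpha,\alpha]$ the restriction $u_M|_{\{x\in S_\alpha^{N-1}:x_N=h\}}$ is constant, i.e. $u_M$ is axially symmetric about the $x_N$-axis, and (ii) for every $\alpha\in(0,R]$ the map $\theta\mapsto u_M(0_{N-2},\alpha\cos\theta,\alpha\sin\theta)$ is decreasing on $[\tfrac{\pi}{2},\tfrac{3\pi}{2}]$. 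Up to reordering coordinates (which only changes $M$ by composition with a permutation matrix, still in $O(N)$), statement (ii) reads $u_M(\alpha\cos\theta,0,\cdots,0,\alpha\sin\theta)$ decreasing in $\theta\in[\tfrac{\pi}{2},\tfrac{3\pi}{2}]$, matching the statement of the theorem. The fact that $u_M$ is \emph{only} axially symmetric and not radially symmetric is immediate, since $u_M(x)=u(M^{-1}x)$ and radial symmetry of $u_M$ about $O$ would force radial symmetry of $u$ about $O$, contradicting the case assumption.

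The only real content beyond bookkeeping is verifying that Proposition~\ref{prop4.1} does give separability in the exact sense demanded by the definition preceding Theorem~\ref{thm2.1}, and that the regularity $u\in C(B_R,(0,\infty))$ holds; both are already available (the former is the last line of the proof of Proposition~\ref{prop4.1}, the latter from the remark on elliptic regularity and the maximum principle preceding Lemma~\ref{lem4.2}). Thus I do not expect a genuine obstacle here: the theorem is essentially a corollary obtained by feeding the separability output of Proposition~\ref{prop4.1} into Theorem~\ref{thm2.1}. If I wanted to be careful about one point, it would be to note explicitly that the hypothesis ``$O\in\partial H$'' in the definition of separability in $B_R$ is exactly the class of half-spaces for which Proposition~\ref{prop4.1} was proved, so the two notions of separability coincide and no gap arises when invoking Theorem~\ref{thm2.1}.
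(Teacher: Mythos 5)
Your proposal is correct and follows essentially the same route as the paper, which deduces Theorem~\ref{thm4.1} directly from Proposition~\ref{prop4.1} (separability of positive ground states) combined with Theorem~\ref{thm2.1}. The extra care you take about regularity, the coordinate permutation, and the matching of the half-space class is sound bookkeeping but introduces nothing beyond the paper's argument.
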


\subsection{Choquard type equations in whole space}
In this subsection, we consider the Choquard equation \eqref{30} in $\mathbb{R}^N$.  The qualitative properties  of ground states  of \eqref{30} have been intensively studied  in \cite{vv}. In particular, the separability of ground state is proved.
 \begin{lemma}\label{lem4.3}
Let $N\geq3, \alpha\in (0,N), p\in(\frac{N+\alpha}{N},\frac{N+\alpha}{N-2}).$ Assume  that  $u \in H^1(\mathbb{R}^N)$ is a positive ground state of  \eqref{30}. Then $u$ is separable in $\mathbb{R}^N.$
\end{lemma}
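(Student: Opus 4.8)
The goal is to show that a positive ground state $u$ of the Choquard equation \eqref{30} in $\mathbb{R}^N$ is separable in the sense of Definition~\ref{defn3.1}, i.e.\ for every open half-space $H\subset\mathbb{R}^N$ one has either $u\geq u\circ\sigma_H$ on $H$ or $u\leq u\circ\sigma_H$ on $H$. The natural engine is the polarization method, exactly as in Proposition~\ref{prop4.1}: fix an arbitrary open half-space $H$ and compare the energy of $u$ with the energy of its polarization $u^H$. Unlike the ball case, here $H$ need not contain the origin, but since the equation is invariant under all isometries of $\mathbb{R}^N$ and the kernel $|x-y|^{-(N-\alpha)}$ depends only on $|x-y|$, nothing is lost.

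The first step is to record the effect of polarization on each term of the variational characterization. As in Lemma~\ref{lem4.1}, ground states are characterized by minimizing a Nehari-type energy; the relevant quantities are the $H^1$-norm $\|u\|^2=\int_{\mathbb{R}^N}(|\nabla u|^2+|u|^2)$ and the nonlocal term
$$
\mathbb{D}(u):=\int_{\mathbb{R}^N}\int_{\mathbb{R}^N}\frac{|u(x)|^p|u(y)|^p}{|x-y|^{N-\alpha}}\,dx\,dy.
$$
Polarization preserves the $H^1$-norm: $\|u^H\|=\|u\|$, by the standard rearrangement of $\int|\nabla u|^2$ and $\int|u|^2$ over the pair $\{x,\sigma_H x\}$ (the computation is identical to the one in the proof of Proposition~\ref{prop4.1}). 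For the nonlocal term one decomposes $\mathbb{R}^N = (H\cap\{u\geq u\circ\sigma_H\})\cup(H\cap\{u<u\circ\sigma_H\})\cup\sigma_H(\cdots)$ and splits $\mathbb{D}(u^H)-\mathbb{D}(u)$ into the four double integrals $I_1,I_2,I_3,I_4$ over $A_u\times A_u$, $A_u\times B_u$, $B_u\times A_u$, $B_u\times B_u$ as in \eqref{23}--\eqref{27}. The key kernel inequality needed is the analogue of Lemma~\ref{lem4.2}(iii): for the translation-invariant kernel $K(x,y)=|x-y|^{-(N-\alpha)}$ and any open half-space $H$, one has $K(x,y)\geq K(\sigma_H x,y)$ whenever $x,y$ lie on the same side of $\partial H$ — this is immediate since reflecting $x$ across $\partial H$ only increases its distance to $y$ when $y$ is on $x$'s side, and $t\mapsto t^{-(N-\alpha)}$ is decreasing. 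With this, exactly as in \eqref{28}--\eqref{29}, $I_1=I_4=0$ and $I_2,I_3\geq 0$, whence $\mathbb{D}(u^H)\geq\mathbb{D}(u)$.

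The second step is to conclude from minimality. Since $\|u^H\|=\|u\|$ and $\mathbb{D}(u^H)\geq\mathbb{D}(u)$, the Nehari projection formula (the whole-space analogue of \eqref{19}, $I(t_v v)=(\tfrac12-\tfrac1{2p})(\|v\|^2/\mathbb{D}^{1/p}(v))^{p/(p-1)}$, which is monotone decreasing in $\mathbb{D}(v)$) gives $I(t_{u^H}u^H)\leq I(t_u u)=c$. Since $u$ is a ground state, $I(t_u u)=c=\inf_{\mathcal N}I$, so equality must hold throughout; in particular $\mathbb{D}(u^H)=\mathbb{D}(u)$, which forces $I_2=I_3=0$. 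Because $u>0$ and $K>0$ strictly, the integrands of $I_2,I_3$ — each of the form $(K(x,y)-K(x,\sigma_H y))(a-b)(d-c)$ with $a=|u(x)|^p$ etc.\ and the kernel difference strictly positive for $x,y$ strictly on the same side — vanish only if $A_u$ or $B_u$ has measure zero. By continuity of $u$ this yields $A_u=\emptyset$ or $B_u=\emptyset$, i.e.\ $u\geq u\circ\sigma_H$ on $H\cap\mathbb{R}^N$ or $u\leq u\circ\sigma_H$ on $H$. As $H$ was an arbitrary open half-space, $u$ is separable.

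\textbf{Main obstacle.} The routine parts (the $H^1$-invariance under polarization, the bookkeeping of $I_1,\dots,I_4$) are mechanical. The one point requiring genuine care is the strictness in the final step: one must argue that $\mathbb{D}(u^H)=\mathbb{D}(u)$ actually forces $A_u=\emptyset$ or $B_u=\emptyset$, rather than merely some measure-theoretic coincidence. This uses that $u$ is everywhere positive and continuous (ground states are $C^2$ by elliptic regularity, and positive by the strong maximum principle after possibly replacing $u$ by $|u|$, which does not change $\mathbb{D}$ or $\|u\|$), and that the kernel difference $K(x,y)-K(x,\sigma_H y)$ is strictly positive on the open set where $x,y$ are strictly interior to $H$; a connectedness/continuity argument then propagates $A_u=\emptyset$ or $B_u=\emptyset$ from a full-measure statement to everywhere. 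A secondary subtlety is verifying that all the integrals involved are finite (so that the splitting $\mathbb{D}(u^H)-\mathbb{D}(u)=I_1+I_2+I_3+I_4$ is legitimate), which follows from the Hardy--Littlewood--Sobolev inequality together with $u\in H^1(\mathbb{R}^N)\cap L^\infty(\mathbb{R}^N)$ in the subcritical range $\frac{N+\alpha}{N}<p<\frac{N+\alpha}{N-2}$.
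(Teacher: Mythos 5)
Your proposal is correct, but note that the paper itself does not prove Lemma~\ref{lem4.3} at all: it simply cites the proof of Lemma~5.3 and Proposition~5.2 of Moroz--Van Schaftingen \cite{vv}. What you have written is essentially that argument, i.e.\ the whole-space analogue of the paper's own Proposition~\ref{prop4.1}: polarization with respect to an arbitrary half-space $H$ (now unconstrained, since $\sigma_H(\mathbb{R}^N)=\mathbb{R}^N$ and the Riesz kernel $|x-y|^{-(N-\alpha)}$ is translation invariant, so the delicate Green's-function estimate of Lemma~\ref{lem4.2}(iii) degenerates to the trivial inequality $|x-y|\le |x-\sigma_H y|$ for $x,y\in H$), preservation of the $H^1$-norm, $\mathbb{D}(u^H)\ge\mathbb{D}(u)$, and then minimality on the Nehari manifold forcing equality and hence $I_2=I_3=0$. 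So the comparison is: the paper buys brevity by outsourcing to \cite{vv}, while your route makes the whole-space case self-contained and visibly parallel to the ball case. Two small points to tighten. First, you invoke the whole-space analogue of Lemma~\ref{lem4.1}; the paper states that lemma only on $B_R$, so you should either verify the fibering/Nehari characterization $c=\inf_{v\ne 0}\sup_{t>0}I(tv)$ in $H^1(\mathbb{R}^N)$ for $\frac{N+\alpha}{N}<p<\frac{N+\alpha}{N-2}$ (it holds, and is in \cite{vv}) or phrase the conclusion directly from the minimality of $u$ among nontrivial solutions. Second, the phrase ``forces $A_u=\emptyset$ or $B_u=\emptyset$'' is slightly loose given that $A_u$ is defined with a non-strict inequality: the clean statement is that $I_2=0$ with the strictly positive kernel difference rules out the simultaneous positivity of the measures of $\{x\in H: u(x)>u(\sigma_Hx)\}$ and $\{x\in H: u(x)<u(\sigma_Hx)\}$, after which continuity of $u$ upgrades the resulting a.e.\ one-sided comparison to all of $H$ — which is exactly \eqref{700}. (The paper's Proposition~\ref{prop4.1} has the same looseness, so this is a refinement, not a gap.)
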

This lemma follows from the proof of Lemma 5.3 and Proposition 5.2 in \cite{vv}.
 Theorem~\ref{thm4.2}
is a direct consequence of Theorem~\ref{thm3.1} and Lemma \ref{lem4.3}.
 \begin{thm}\label{thm4.2}
Let $N\geq3, \alpha\in (0,N), p\in(\frac{N+\alpha}{N},\frac{N+\alpha}{N-2}).$ Assume  that  $u \in H^1(\mathbb{R}^N)$ is a positive ground state of  \eqref{30}. Then  there exist $x^*\in \mathbb{R}^N$ and a nonnegative decreasing function $v:(0,\infty)\to\mathbb{R}$ such that
$u(x)=v(|x-x^*|)$
and $\lim\limits_{r\to\infty}v(r)=0.$
\end{thm}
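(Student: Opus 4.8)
The plan is to derive Theorem~\ref{thm4.2} as a direct corollary of the abstract radial symmetry result in Theorem~\ref{thm3.1}, once the separability of the ground state has been secured by Lemma~\ref{lem4.3}. First I would fix a positive ground state $u\in H^1(\mathbb{R}^N)$ of \eqref{30} and recall from Lemma~\ref{lem4.3} that $u$ is separable in $\mathbb{R}^N$ in the sense of Definition~\ref{defn3.1}. To invoke Theorem~\ref{thm3.1}, the remaining hypothesis to check is $u\in C(\mathbb{R}^N,(0,\infty))$ together with $\liminf_{|x|\to\infty}u(x)=0$; both follow from standard regularity and decay theory for \eqref{30}: elliptic regularity (bootstrapping through the Hardy--Littlewood--Sobolev estimate that controls the Riesz-potential nonlinearity) gives $u\in C^2$, the strong maximum principle forces $u>0$ everywhere since $u$ is a nontrivial nonnegative solution, and the standard $H^1$-decay estimates for ground states of Choquard equations (e.g.\ those used in \cite{vv}) give $u(x)\to 0$ as $|x|\to\infty$, hence in particular $\liminf_{|x|\to\infty}u(x)=0$.

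With these hypotheses in hand, Theorem~\ref{thm3.1} applies verbatim and yields a point $x^*\in\mathbb{R}^N$ and a decreasing function $v:[0,\infty)\to(0,\infty)$ with $u(x)=v(|x-x^*|)$ and $\lim_{r\to\infty}v(r)=0$. Restricting $v$ to $(0,\infty)$ gives the stated representation; since $v$ takes positive values and is decreasing with limit $0$ at infinity, it is in particular nonnegative, so the conclusion of Theorem~\ref{thm4.2} is exactly the conclusion of Theorem~\ref{thm3.1} after this cosmetic rephrasing. Thus the proof is a one-line deduction: \emph{By Lemma~\ref{lem4.3}, $u$ is separable in $\mathbb{R}^N$; by elliptic regularity and the decay of ground states, $u\in C(\mathbb{R}^N,(0,\infty))$ with $\liminf_{|x|\to\infty}u(x)=0$; now Theorem~\ref{thm3.1} gives the claim.}

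The only genuine content, and hence the main obstacle, is verifying the interface hypotheses of Theorem~\ref{thm3.1} rather than reproving symmetry: one must be sure that the positive ground state is continuous, strictly positive, and vanishes at infinity. For a ground state of \eqref{30} all three are classical — positivity from the maximum principle applied to the sign-definite solution, continuity (indeed $C^2$) from regularity bootstrapping using $\frac{N+\alpha}{N}<p<\frac{N+\alpha}{N-2}$, and decay from the standard argument comparing $u$ with the fundamental solution of $-\Delta+1$ after establishing an $L^\infty$ bound — so in the write-up I would simply cite these facts (pointing to \cite{vv} and standard references such as \cite{MZ}) and not reprove them. If one wanted to be scrupulous about the hypothesis $v:[0,\infty)\to(0,\infty)$ versus the statement's $v:(0,\infty)\to\mathbb{R}$ nonnegative, the only subtlety is whether $v(0)$ could fail to be positive, which it cannot since $u>0$ everywhere forces $v>0$ on all of $[0,\infty)$; so no extra argument is needed. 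Hence the theorem follows directly and its proof can be stated in two sentences.
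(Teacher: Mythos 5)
Your proposal is correct and follows essentially the same route as the paper, which deduces Theorem~\ref{thm4.2} directly from Lemma~\ref{lem4.3} (separability of the ground state, from \cite{vv}) together with Theorem~\ref{thm3.1}. Your extra verification that $u$ is continuous, strictly positive, and vanishes at infinity is exactly the standard regularity/decay input the paper implicitly relies on, so there is no substantive difference.
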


This paper  provides  a new and different perspective to study the symmetry and monotonicity of solutions to elliptic equations.
In the future work, on one hand, we want to investigate the symmetry and monotonicity of separable functions  in other symmetric domains rather than $B_R$ and $\mathbb{R}^N$. On the other hand, we are interested in the radial symmetry and uniqueness of ground states of Choquard type equations in $B_R$.  Furthermore, the relationship between the ground states of Choquard type equations in $B_R$ and  that in $\mathbb{R}^N$ when $R\to \infty$ is also worth studying.


\section*{Acknowledgement} Part of this work was done during Wang's visit of the Department of Mathematics, Sun Yat-sen University (Zhuhai). She appreciated the hospitality from the faculty and staff.

\bigskip

Tao Wang

College of Mathematics and Computing Science

Hunan University of Science and Technology

Xiangtan, Hunan 411201, P. R. China

Email: wt\_61003@163.com

\bigskip

Taishan Yi

School of Mathematics (Zhuhai)

Sun Yat-Sen University

Zhuhai, Guangdong 519082,  P. R. China

Email: yitaishan@mail.sysu.edu.cn
\end{document}